\documentclass[10pt]{article}

\usepackage{amsmath,amssymb,amsthm,mathrsfs}

\makeatletter
\renewenvironment{proof}[1][\proofname]{\par
  \pushQED{\qed}%
  \normalfont\topsep6\p@\@plus6\p@\relax
  \trivlist
  \item[\hskip\labelsep\bfseries #1\@addpunct{.}]\ignorespaces
}{%
  \popQED\endtrivlist\@endpefalse
}
\makeatother
\usepackage{enumerate}
\usepackage[margin=3cm]{geometry}
\usepackage{hyperref}
\hypersetup{hidelinks}
\newtheorem{theo}{Theorem}[section]
\newtheorem{lem}[theo]{Lemma}

\newtheorem{prop}[theo]{Proposition}
\newtheorem{cor}[theo]{Corollary}

\numberwithin{equation}{section}
\allowdisplaybreaks

\newcommand\ep{{\varepsilon}}
\newcommand{\R}{\mathbb{R}}

\newcommand{\Lip}{\operatorname{Lip}}
\newcommand{\diver}{\operatorname{div}}

\begin{document}

\title{Inviscid limit for the hyperbolic Keller--Segel equations in $B_{\infty,1}^{1}$}
\author{Yang Cen\textsuperscript{1}\thanks{E-mail: ceny3@mail2.sysu.edu.cn},\quad
Zhaoyang Yin\textsuperscript{1}\thanks{E-mail: mcsyzy@mail.sysu.edu.cn}\\
\textsuperscript{1}School of Science,\\
Shenzhen Campus of Sun Yat-sen University, Shenzhen 518107, China}
\date{}
\maketitle

\begin{abstract}
In this paper, we study the inviscid limit for the hyperbolic
Keller--Segel equations with logistic sensitivity $1-u$.
We establish a common existence time and a priori estimates uniform
with respect to the diffusion coefficient in the critical Besov spaces
$B_{p,1}^{1+d/p}(\R^d)$ with $1\le p\le\infty$.
For $1<p<\infty$, we obtain explicit convergence rates in $B_{p,1}^{0}$,
and we also give estimates for the case $p=1$.
For initial data in $B_{\infty,1}^{1}$, we prove convergence at rate
$O(\ep^{1/2})$ in $L^\infty$, a bound of order
$O(\ep^{1/2}\log(e/\ep))$ in $B_{\infty,1}^{0}$, and strong convergence
in $L^\infty(0,T;B_{\infty,1}^{1})$.
Additional regularity of the initial data yields improved convergence
rates. We construct a fixed initial datum showing that the exponent
$1/2$ in $L^\infty$ is optimal.

\medskip
\noindent\textit{Keywords}: Keller--Segel equations; Inviscid limit;
Besov spaces; convergence rates
\end{abstract}

\section{Introduction}

Chemotaxis is the directed movement of cells or organisms in response
to chemical gradients. Patlak \cite{Patlak1953} introduced a biased
random-walk description of this process. Keller and Segel
\cite{KellerSegel1970,KellerSegel1971a,KellerSegel1971b} subsequently
proposed continuum models for slime mold aggregation and bacterial
traveling bands. For an overview of chemotaxis models and their
mathematical analysis, we refer to
\cite{Horstmann2003,HillenPainter2009}.
A general chemotaxis system takes the form
\[
\begin{cases}
\partial_t u=\nabla\cdot\bigl(D_u(u,S)\nabla u-\chi(u,S)u\nabla S\bigr)+H(u,S),\\
\tau\partial_t S=D_S(u,S)\Delta S+K(u,S),
\end{cases}
\]
where $u$ is the cell density and $S$ is the concentration of the
chemical signal. The coefficients $D_u$ and $D_S$ describe diffusion,
$\chi$ is the chemotactic sensitivity, and $H$ and $K$ are source terms.
The parameter $\tau\ge0$ describes the relaxation of the chemical signal.

In this paper, we consider the logistic sensitivity $\chi(u,S)=1-u$,
which accounts for the volume-filling effect, and the elliptic relation
$(I-\Delta)S=u$. The corresponding parabolic system is
\begin{equation}\label{eq:PKS}
\begin{cases}
\partial_t u^\ep-\ep\Delta u^\ep+\diver\bigl(u^\ep(1-u^\ep)\nabla S^\ep\bigr)=0,\\
(I-\Delta)S^\ep=u^\ep,\\
u^\ep|_{t=0}=u_0,
\end{cases}
\end{equation}
where $0<\ep\le1$ is the diffusion coefficient and
$(t,x)\in[0,T]\times\R^d$, $d\ge1$.
Setting $\ep=0$ gives the hyperbolic Keller--Segel system
\begin{equation}\label{eq:HKSE}
\begin{cases}
\partial_t u+\diver\bigl(u(1-u)\nabla S\bigr)=0,\\
(I-\Delta)S=u,\\
u|_{t=0}=u_0.
\end{cases}
\end{equation}
Throughout the paper, the initial data and solutions take values in $\mathbb{R}$.
Expanding the flux and using $S=(I-\Delta)^{-1}u$, we can rewrite
the systems \eqref{eq:PKS} and \eqref{eq:HKSE} in the unified form
\begin{equation}\label{eq:c5-unified}
\partial_tu+(1-2u)\nabla (I-\Delta)^{-1}u\cdot\nabla u-\ep\Delta u
=-u(1-u)((I-\Delta)^{-1}u-u),\qquad \ep\in[0,1].
\end{equation}

There is an extensive literature on the qualitative properties of
solutions to the Keller--Segel equations. J\"ager and Luckhaus
\cite{JagerLuckhaus1992} studied finite-time concentration.
Nagai, Senba and Yoshida \cite{NagaiSenbaYoshida1997} used the
Moser--Trudinger inequality to analyze parabolic chemotaxis systems.
Biler \cite{Biler1998} investigated local and global solvability.
Corrias, Perthame and Zaag \cite{CorriasPerthameZaag2004} obtained
global weak solutions for related chemotaxis and angiogenesis systems in
high dimensions. Blanchet, Dolbeault and Perthame
\cite{BlanchetDolbeaultPerthame2006} studied the critical mass mechanism
for the two-dimensional parabolic--elliptic model. Further results on
finite-time blow-up and the global behavior of solutions were obtained
by Winkler \cite{Winkler2010,Winkler2013}.

For models incorporating the volume-filling effect, Hillen and Painter
\cite{HillenPainter2001} established global existence for a parabolic
chemotaxis system with prevention of overcrowding. Dolak and Schmeiser \cite{DolakSchmeiser2005} studied the
small-diffusivity limit of the one-dimensional model with
logistic sensitivity on a bounded interval. Burger, Di Francesco and Dolak-Struss
\cite{BurgerDiFrancescoDolakStruss2006} investigated weak solutions and
large-time behavior. Burger, Dolak-Struss and Schmeiser
\cite{BurgerDolakStrussSchmeiser2008} analyzed the advection-dominated
limit in several dimensions. Perthame and Dalibard \cite{PerthameDalibard2009} constructed
entropy solutions of the hyperbolic model on bounded domains
by vanishing viscosity. For related hyperbolic chemotaxis models, Derbel and Jabin
\cite{DerbelJabin2007} studied concentration sets.
Lee and Liu \cite{LeeLiu2015} obtained a threshold criterion
for shock formation in one dimension.

In the framework of Besov spaces, Zhou, Zhang and Mu
\cite{ZhouZhangMu2021} proved local well-posedness and nonuniform
dependence on the initial data in $B_{p,r}^{s}$ for $s>1+d/p$.
Zhang, Mu and Zhou \cite{ZhangMuZhou2022} established local well-posedness
in $B_{p,1}^{1+d/p}$ for $1\le p<\infty$, studied the inviscid limit in
the same Besov topology as the initial data, and derived blow-up
criteria. For a Keller--Segel model with nonlinear chemotactic sensitivity
and signal secretion, Zhou and Zhang \cite{ZhouZhang2024}
established local well-posedness and a vanishing-diffusivity
limit in supercritical Besov spaces. Meng, Nie, Ye and Yin \cite{MengNieYeYin2024} obtained results
on global well-posedness near equilibrium, blow-up and norm inflation
in Sobolev spaces on the torus. Zheng, Zhou and Yang
\cite{ZhengZhouYang2025} investigated local well-posedness and the
inviscid limit for a general Keller--Segel system with logistic sources.
At the endpoint, Ding and Yu \cite{DingYu2026} established local
existence, uniqueness and continuous dependence in $B_{\infty,1}^{1}$,
and proved nonuniform dependence on the initial data. For related endpoint results for the Euler and Navier--Stokes
equations, we refer to Guo, Li and Yin \cite{GuoLiYin2019}.
Yu and Liu \cite{YuLiu2025} studied nonuniform convergence with respect
to the initial data in Besov spaces. The present paper addresses the
inviscid limit in $B_{\infty,1}^{1}$ and quantitative estimates in
lower-order norms.

For $1<p<\infty$, the embedding
$B_{p,1}^{1+d/p}\hookrightarrow W^{1,\infty}$ provides Lipschitz bounds,
and the error can be estimated in $B_{p,1}^{0}$ by a critical product
estimate. The case $p=1$ requires a separate argument. For $p=\infty$, we first estimate the error in
$L^\infty$ and then apply logarithmic interpolation to obtain a bound
in $B_{\infty,1}^{0}$. Strong convergence in $B_{\infty,1}^{1}$ follows
by regularizing the initial data and applying stability estimates
uniform in $\ep$.

We now state the main results.

\begin{theo}\label{thm:visc}
Let $d\ge1$, $1<p<\infty$, and
$u_0\in B_{p,1}^{1+d/p}(\R^d)$. Set
$M=4(1+\|u_0\|_{B_{p,1}^{1+d/p}})$.
Then there exists a time $T=T(d,p,M)>0$, independent of $\ep$,
such that, for every $0<\ep\le1$, the systems
\eqref{eq:PKS} and \eqref{eq:HKSE} admit unique solutions
\[
u^\ep,u^0\in C([0,T];B_{p,1}^{1+d/p}(\R^d))
\]
with the same initial datum $u_0$. Moreover,
\[
 \|u^\ep-u^0\|_{L_T^\infty(B_{p,1}^{0})}
 \le C_{d,p,M,T}
 \begin{cases}
 \ep,&p\le d,\\[1mm]
 \ep^{(1+d/p)/2},&p>d.
 \end{cases}
\]
The constant is independent of $\ep$.
The case $p=1$ is considered in
Proposition~\ref{prop:p1-boundary}.
\end{theo}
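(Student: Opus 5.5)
The proof splits into two parts: uniform-in-$\ep$ well-posedness, and an error estimate in $B_{p,1}^0$.

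\emph{Uniform existence.} Write the unified form \eqref{eq:c5-unified} as a transport--diffusion equation with velocity $v=(1-2u)\nabla(I-\Delta)^{-1}u$ and source $F=-u(1-u)((I-\Delta)^{-1}u-u)$. For the transport--diffusion equation $\partial_t u+v\cdot\nabla u-\ep\Delta u=F$, the standard Besov estimate in $B_{p,1}^{s}$ with $s=1+d/p$ holds with constants independent of $\ep\ge0$:
\[
\|u(t)\|_{B^{s}_{p,1}}\le e^{CV(t)}\Bigl(\|u_0\|_{B^s_{p,1}}+\int_0^t e^{-CV}\|F\|_{B^s_{p,1}}\Bigr),\qquad V(t)=\int_0^t\|\nabla v\|_{B^{d/p}_{p,1}}.
\]
The reason is that the heat semigroup is a contraction on each dyadic block, so the commutator argument is unaffected by $\ep$. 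Since $B^{s}_{p,1}$ is an algebra and $(I-\Delta)^{-1}$ gains two derivatives, we have $\|\nabla v\|_{B^{d/p}_{p,1}}+\|F\|_{B^s_{p,1}}\le C(1+\|u\|_{B^s_{p,1}})^3$. We then run a Friedrichs or iterative scheme. Choosing $T=T(d,p,M)$ with, say, $CTM^3$ small keeps $\|u^\ep\|_{L^\infty_T B^s_{p,1}}\le M/2$ uniformly in $\ep$. Convergence of the scheme and uniqueness follow from a difference estimate in $B^{d/p}_{p,1}$ (or $B^{s-1}_{p,1}$), and time continuity follows as usual.

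\emph{Error estimate.} Set $w=u^\ep-u^0$. Subtracting the two equations gives
\[
\partial_t w+v^\ep\cdot\nabla w-\ep\Delta w=\ep\Delta u^0-(v^\ep-v^0)\cdot\nabla u^0+(F^\ep-F^0).
\]
We apply the $B^0_{p,1}$ transport--diffusion estimate, which is valid since $\diver v^\ep\in L^\infty$ and $\nabla v^\ep\in B^{d/p}_{p,1}\hookrightarrow L^\infty$ for $1<p<\infty$; the regularity index $0$ lies in the admissible range $-d\min(1/p,1/p')<0<1+d/p$. The product terms are handled by the critical product estimate $B^{d/p}_{p,1}\times B^{0}_{p,1}\to B^0_{p,1}$, valid for $1<p<\infty$. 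This gives $\|(v^\ep-v^0)\cdot\nabla u^0\|_{B^0_{p,1}}\le C\|v^\ep-v^0\|_{B^{d/p}_{p,1}}\|u^0\|_{B^{1+d/p}_{p,1}}$. Because $v^\ep-v^0$ is linear in $w$ modulo smooth factors, $\|v^\ep-v^0\|_{B^{d/p}_{p,1}}\lesssim\|w\|_{B^{0}_{p,1}}$ up to the gain from $(I-\Delta)^{-1}\nabla$. The $-2w\nabla S$ part and the $F$ difference are bounded similarly by $C_M\|w\|_{B^0_{p,1}}$. Gronwall then reduces everything to bounding $\ep\int_0^t\|\Delta u^0\|_{B^0_{p,1}}$.

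\emph{The viscous term (main obstacle).} We only know $u^0\in B^{1+d/p}_{p,1}$, so $\Delta u^0$ has regularity $d/p-1$. If $p\le d$ this is nonnegative, so $\|\Delta u^0\|_{B^0_{p,1}}\lesssim\|u^0\|_{B^{2}_{p,1}}\lesssim\|u^0\|_{B^{1+d/p}_{p,1}}$, and we get the rate $\ep$. If $p>d$ we do not put $\ep\Delta u^0$ on the right-hand side. Instead we use the smoothing of the heat semigroup: the contribution $\int_0^t e^{\ep(t-\tau)\Delta}\ep\Delta u^0\,d\tau$ is bounded blockwise by
\[
\min(1,\ep t2^{2j})\,2^{-j(1+d/p)}\,\|u^0\|_{B^{1+d/p}_{p,1}}.
\]
Splitting at the frequency $2^{j}\sim\ep^{-1/2}$ and summing over $j$ gives $\ep^{(1+d/p)/2}$. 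Equivalently, we can estimate $u^\ep-u^0$ in the low-regularity space $B^{-(1-d/p)}_{p,1}$ and interpolate. To make this rigorous inside the nonlinear estimate, decompose $w=w_1+w_2$, where $w_1$ solves the linear heat-type problem with source $\ep\Delta u^0$. The other part $w_2$ is then handled by the Gronwall argument above, with $w_1$ treated as a source of size $\ep^{(1+d/p)/2}$. The delicate point is the transport commutator acting on $w_1$ at negative regularity, which requires $\nabla v^\ep\in L^\infty$ and the condition $p>1$.
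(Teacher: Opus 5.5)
\textbf{There is a gap in the case $p>d$.} Your route differs from the paper's, and its core idea can be made to work. However, the step you flag as delicate is resolved incorrectly, so the argument for $p>d$ is incomplete as written.

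\emph{The negative index is not admissible.} Your rigorous version estimates $w_1$ (or $u^\ep-u^0$) at the negative index $d/p-1$ and then interpolates. The velocity $v^\ep=(1-2u^\ep)\nabla S^\ep$ is not divergence-free. Hence the commutator and transport(--diffusion) estimates, Lemmas~\ref{lem:bcd-commutator}, \ref{lem:transport-estimate} and \ref{lem:transport-diffusion}, require $\sigma>-d\min\{1/p,1/p'\}$. The condition is not merely ``$\nabla v^\ep\in L^\infty$ and $p>1$''. For $\sigma=d/p-1$ this fails whenever $p\ge 2d$. It also fails for $d=1$, $1<p<2$, where equality holds and $r=1$ requires strict inequality. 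So in dimension one this route is never available.

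\emph{$w_1$ must carry the transport term.} Your displayed computation uses the pure heat semigroup. If $w_1$ is defined by the pure heat equation, the term $v^\ep\cdot\nabla w_1$ lands in the $w_2$ equation. There $\|\nabla w_1\|_{B^0_{p,1}}$ is only bounded by $C\ep^{d/(2p)}$, which loses a factor $\ep^{1/2}$.

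\emph{The fix is to stay at index $0$.} Let $w_1$ solve $\partial_tw_1+v^\ep\cdot\nabla w_1-\ep\Delta w_1=\ep\Delta u^0$, $w_1(0)=0$. Apply Lemma~\ref{lem:transport-diffusion} with $s=0$, $\rho=\infty$ and $\rho_1=2/(1+d/p)\in(1,2)$:
\[
\|w_1\|_{\widetilde L_T^\infty(B_{p,1}^{0})}
\le C\ep^{1/\rho_1-1}\|\ep\Delta u^0\|_{\widetilde L_T^{\rho_1}(B_{p,1}^{d/p-1})}
\le C_{M,T}\ep^{(1+d/p)/2}.
\]
This is exactly your blockwise $\min(1,\ep t2^{2j})$ computation, carried out for the full transport--diffusion operator. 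The commutator is summed at index $0$, where Lemma~\ref{lem:bcd-commutator} holds for $1<p<\infty$. Then $w_2$ is handled by Gronwall as you describe.

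\emph{A smaller slip.} The bound $\|v^\ep-v^0\|_{B_{p,1}^{d/p}}\lesssim\|w\|_{B_{p,1}^{0}}$ is false for $p<d$, since then $\nabla(I-\Delta)^{-1}w$ lies only in $B_{p,1}^{1}$. Instead, in Lemma~\ref{lem:critical-multiplier} put $v^\ep-v^0$ in $B_{p,1}^{0}$ and $\nabla u^0$ in $B_{p,1}^{d/p}$.

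\textbf{Comparison with the paper.} The paper needs neither a splitting nor a negative index. It writes the error as a pure transport equation with velocity $(1-2u^0)\nabla S^0$ and source $F_1=\ep\Delta u^\ep$, so the smoothing comes from the viscous solution itself. Lemma~\ref{lem:transport-diffusion} at the critical index gives $\ep\|u^\ep\|_{L_T^1(B_{p,1}^{3+d/p})}\le C_{M,T}$. Interpolating $B_{p,1}^{2}$ between $B_{p,1}^{1+d/p}$ and $B_{p,1}^{3+d/p}$ then gives $\ep\|u^\ep\|_{L_T^1(B_{p,1}^{2})}\le C_{M,T}\ep^{(1+d/p)/2}$. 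The error is estimated by Lemma~\ref{lem:transport-estimate} at $\sigma=0$. Your corrected route instead puts the dissipation on the error equation and needs no higher-norm bound on $u^\ep$. Both give the same exponent.
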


\begin{theo}\label{thm:c5-endpoint-vv}
Let $d\ge1$ and $u_0\in B_{\infty,1}^{1}(\R^d)$.
Let $T=T_*>0$ be the common existence time given by
Lemma~\ref{lem:uniform-local}, and let $u^\ep,u^0$ be the solutions of
\eqref{eq:PKS} and \eqref{eq:HKSE} with the same initial data $u_0$
on $[0,T]$.
Let $M=4(1+\|u_0\|_{B_{\infty,1}^{1}})$.
Then, for every $0<\ep\le1$,
\begin{equation}\label{eq:c5-rate-Linf}
\|u^\ep-u^0\|_{L_T^\infty(L^\infty)}
\le C_{M,T}\ep^{1/2},
\end{equation}
and
\begin{equation}\label{eq:c5-rate-B0}
\|u^\ep-u^0\|_{L_T^\infty(B_{\infty,1}^{0})}
\le C_{M,T}\ep^{1/2}
\left(1+\log\left(e+\frac{C_M}{\ep^{1/2}}\right)\right).
\end{equation}
Moreover,
\begin{equation}\label{eq:c5-strong-B1}
\lim_{\ep\to0}\|u^\ep-u^0\|_{L_T^\infty(B_{\infty,1}^{1})}=0.
\end{equation}

Furthermore, one can find a fixed initial datum
$u_0\in B_{\infty,1}^{1}(\R^d)$, independent of $\ep$,
such that, with $T$ and $M$ chosen as above for this datum,
the corresponding solutions satisfy the following two-sided estimate:
for every fixed $t_*\in(0,T]$, there exist
$c_{t_*}>0$ and $\ep_{t_*}\in(0,1]$, independent of $\ep$, such that
\begin{equation}\label{eq:c5-sharp-two-sided}
\frac{c_{t_*}\ep^{1/2}}{[\log(e/\ep)]^2}
\le \|u^\ep(t_*)-u^0(t_*)\|_{L^\infty}
\le C_{M,T}\ep^{1/2},
\qquad 0<\ep\le\ep_{t_*}.
\end{equation}
Consequently, the exponent $1/2$ in \eqref{eq:c5-rate-Linf}
is sharp: for this fixed datum, no convergence rate
$O(\ep^\alpha)$ with $\alpha>1/2$ holds in either
$L_T^\infty(L^\infty)$ or $L_T^\infty(B_{\infty,1}^{0})$.
\end{theo}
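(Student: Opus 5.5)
The plan has four parts: an energy-type estimate in $L^\infty$, a logarithmic interpolation, a density argument for the strong $B^1_{\infty,1}$ convergence, and an explicit lower-bound example.

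\textbf{The $L^\infty$ rate.} Set $w=u^\ep-u^0$ and write the difference of the unified equation \eqref{eq:c5-unified} as
\[
\partial_t w+(1-2u^\ep)\nabla(I-\Delta)^{-1}u^\ep\cdot\nabla w-\ep\Delta w=\ep\Delta u^0+F,
\]
where $F$ collects the terms linear in $w$ or $(I-\Delta)^{-1}w$. The coefficients of $F$ involve $\nabla u^0$, $u^\ep$, $u^0$ and $\nabla(I-\Delta)^{-1}u^{\ep,0}$, all bounded by $C_M$ by the uniform $B^1_{\infty,1}\hookrightarrow W^{1,\infty}$ bounds of Lemma~\ref{lem:uniform-local}. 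Hence $\|F\|_{L^\infty}\le C_M\|w\|_{L^\infty}$, using that $(I-\Delta)^{-1}$ and $\nabla(I-\Delta)^{-1}$ are bounded on $L^\infty$.

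The source $\ep\Delta u^0$ is not bounded in $L^\infty$, so it cannot be treated directly. I would split $u^0=S_Nu^0+(I-S_N)u^0$ and use $\|(I-S_N)u^0\|_{L^\infty}\lesssim 2^{-N}\|u^0\|_{B^1_{\infty,1}}$ and $\ep\|\Delta S_Nu^0\|_{L^\infty}\lesssim \ep 2^N$. The cleanest implementation is to compare instead $w_N=u^\ep-u^0_N$, where $u^0_N$ solves the hyperbolic system with data $S_Nu_0$. Then:
\begin{itemize}
\item the maximum principle for the transport--diffusion equation with Gronwall gives $\|u^\ep-u^0_N\|\lesssim \ep 2^N$, using $\|\Delta u^0_N\|_{L^\infty}\lesssim 2^N$ from propagation of $B^2_{\infty,1}$ regularity;
\item $L^\infty$-stability for the hyperbolic system gives $\|u^0_N-u^0\|\lesssim 2^{-N}$.
\end{itemize}
Choosing $2^N\sim\ep^{-1/2}$ yields \eqref{eq:c5-rate-Linf}.

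\textbf{The $B^0_{\infty,1}$ bound.} Estimate \eqref{eq:c5-rate-B0} follows from the logarithmic interpolation
\[
\|w\|_{B^0_{\infty,1}}\lesssim \|w\|_{B^0_{\infty,\infty}}\Bigl(1+\log\bigl(e+\|w\|_{B^1_{\infty,1}}/\|w\|_{B^0_{\infty,\infty}}\bigr)\Bigr),
\]
together with $\|w\|_{B^0_{\infty,\infty}}\le C\|w\|_{L^\infty}$ and $\|w\|_{B^1_{\infty,1}}\le C_M$. Monotonicity of $x\mapsto x\log(e+C/x)$ then gives the stated bound.

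\textbf{Strong convergence in $B^1_{\infty,1}$.} For \eqref{eq:c5-strong-B1} I would write
\[
u^\ep-u^0=(u^\ep-u^\ep_N)+(u^\ep_N-u^0_N)+(u^0_N-u^0),
\]
where the subscript $N$ means data $S_Nu_0$. The first and last terms are bounded by the uniform-in-$\ep$ stability estimates in $B^1_{\infty,1}$ (the Bona--Smith type argument used for continuous dependence), and they tend to $0$ as $N\to\infty$ uniformly in $\ep$. The middle term is controlled by interpolating between an $O_N(\ep^{1/2})$ rate in $B^1_{\infty,1}$, obtained from $B^2_{\infty,1}$ bounds for smooth data, and uniform higher bounds. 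Taking $N$ large and then $\ep$ small concludes. This uniform stability step, which needs $\|u^\ep_N\|_{B^2_{\infty,1}}\lesssim 2^N$ uniformly in $\ep$, is technical but standard.

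\textbf{Sharpness.} This is the main obstacle. I would take a lacunary datum $u_0=\delta\sum_j j^{-2}2^{-j}\cos(2^jx_1)$ (times a cutoff if needed), which lies in $B^1_{\infty,1}$. At a point where the phases align, the linear heat effect $e^{\ep t\Delta}$ damps block $j$ by the factor $(1-e^{-\ep t2^{2j}})$. Picking $j$ with $2^{j}\sim\ep^{-1/2}$ gives a contribution of order $\ep^{1/2}/(\log(1/\ep))^2$. The difficulty is showing that the nonlinear transport does not destroy this lower bound. I would try first to make $\delta$ small, so that the nonlinear corrections are quadratic and bounded by $\delta^2\ep^{1/2}$ via the upper estimate applied to the Duhamel remainder, and to track the phase shift along characteristics, which moves all modes coherently in $x_1$. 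Then a dominant linear term of size $\delta$ beats the remainder for $\ep\le\ep_{t_*}$.
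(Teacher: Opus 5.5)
Your arguments for \eqref{eq:c5-rate-Linf}, \eqref{eq:c5-rate-B0} and \eqref{eq:c5-strong-B1} are essentially sound. The logarithmic interpolation and the three-term Bona--Smith splitting are what the paper does; the paper bounds the middle term directly by $C_{M,T}\ep2^{2N}$ using $B^3_{\infty,1}$ bounds.

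For the $L^\infty$ rate you take a different route. You compare $u^\ep$ with the inviscid solution issued from $S_Nu_0$ and balance $2^{-N}$ against $\ep2^N$. Your first bullet should read $2^{-N}+\ep2^N$, since $u^\ep-u^0_N$ does not vanish at $t=0$. The paper instead proves the smoothing bound $\ep\|u^\ep\|_{L^1_T(B^3_{\infty,1})}\le C_{M,T}$ and interpolates $B^2_{\infty,1}$ between $B^1_{\infty,1}$ and $B^3_{\infty,1}$. Your route avoids the maximal-regularity estimate. However, $u^0_N$ is only known to exist on $[0,T]$ for $N\ge N_0(u_0)$, so your constant depends on $u_0$ and not only on $M,T$, unless you restart the comparison on short subintervals.

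The genuine gap is in the sharpness part. For your lacunary datum, the linear heat gain at $x_1=0$ is of order $c\,\delta\,\ep^{1/2}/[\log(1/\ep)]^2$. You bound the quadratic Duhamel remainder by $C\delta^2\ep^{1/2}$ via the general upper estimate. The gain dominates only if $\delta[\log(1/\ep)]^2<c/C$, i.e.\ for $\ep>\exp(-\sqrt{c/(C\delta)})$. So the comparison gets worse as $\ep\to0$: for a fixed datum it gives nothing on $0<\ep\le\ep_{t_*}$. Letting $\delta$ depend on $\ep$ would violate the requirement of a fixed datum.

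This is structural, not a matter of tuning. Any $B^1_{\infty,1}$ datum has summable dyadic coefficients, so the linear gain is always $o(\ep^{1/2})$, while a remainder bound inherited from the generic $O(\ep^{1/2})$ estimate is not. A perturbative proof would need two things you do not supply. First, a datum-specific remainder estimate carrying the same $[\log(1/\ep)]^{-2}$ gain with a small prefactor. Second, control of how the nonlinear flow distorts the lacunary structure near $x_1=0$, where the velocity vanishes only linearly and is not constant. ``Tracking phase shifts'' provides neither.

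The paper avoids linearization altogether. It takes $u_0=\tfrac12+f(x_1)$, with $f$ the even periodic extension of $\delta F(|x|)$ and $F(r)=r/[\log(e/r)]^2$ near $0$. This is a single logarithmically softened corner, which lies in $B^1_{\infty,1}$ by the Dini criterion. The argument then has three ingredients:
\begin{enumerate}
\item Integrating once in $x_1$ turns the viscous source into $\ep\partial_xu^0\in L^\infty$, so the maximum principle gives $\|S^\ep-S^0\|_{L^\infty}\le C\ep$.
\item Evenness bounds the drift $(1-2u^\ep)\partial_xS^\ep$ by $C|x|$.
\item The explicit subsolution $Q$ is the heat semigroup applied to $\delta F(|\cdot|)$, evaluated at $e^{-Ct}x$, minus $Ce^{(3+3C)t}(x^2+\ep)$. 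Fed into Lemma~\ref{lem:comparison}, it yields $u^\ep(t_*,0)-u^0(t_*,0)\ge c\sqrt\ep/[\log(e/\ep)]^2-C\ep$.
\end{enumerate}
All losses there are $O(\ep)$, genuinely smaller than the gain, and the nonlinearity is handled exactly by comparison rather than perturbatively. That non-perturbative comparison step is the idea missing from your proposal.
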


The common existence time and uniform a priori estimates follow from
Lemma~\ref{lem:uniform-local}. The convergence estimates remain valid
on any common existence interval on which the solutions have uniform
bounds in the corresponding critical Besov spaces.
Proposition~\ref{prop:higher-rates} gives improved rates for more regular
initial data. The sharpness assertion follows from
Proposition~\ref{prop:c5-lower-fixed}.

The organization of the paper is as follows. Section~\ref{sec:preliminaries}
recalls the properties of Besov spaces and the transport--diffusion
estimates used in the proofs. Section~\ref{sec:finite-p-vv} establishes
uniform local estimates and proves the convergence rates for $p<\infty$.
Section~\ref{sec:endpoint-vanishing-viscosity}
treats the case $p=\infty$, including convergence rates and lower bounds.

\section{Preliminaries}\label{sec:preliminaries}

We recall the Littlewood--Paley decomposition, Besov spaces and Bony's decomposition \cite{BCD2011}. All Besov spaces considered below are nonhomogeneous spaces on $\R^d$. Throughout, $C$ denotes a positive constant which may vary from line to line and is independent of $\ep$. We denote by $p'$ the conjugate exponent of $p$, with $1/p+1/p'=1$.

Choose radial functions $\chi,\varphi\in C_c^\infty(\R^d)$,
taking values in $[0,1]$, supported in
$\{|\xi|\le4/3\}$ and $\{3/4\le|\xi|\le8/3\}$, respectively,
such that
\[
 \chi(\xi)+\sum_{j\ge0}\varphi(2^{-j}\xi)=1,
 \qquad \xi\in\R^d.
\]
The dyadic supports have uniformly finite overlap.
For $f\in\mathcal S'(\R^d)$, define the dyadic blocks by
\[
 \Delta_jf=
 \begin{cases}
  0, & j\le-2,\\
  \chi(D)f, & j=-1,\\
  \varphi(2^{-j}D)f, & j\ge0,
 \end{cases}
\]
and the low-frequency cutoff operators by
\[
 S_Nf=\sum_{j<N}\Delta_jf,
 \qquad N\in\mathbb Z.
\]

For $s\in\R$ and $1\le p,r\le\infty$, the Besov space
$B_{p,r}^{s}(\R^d)$ is defined by
\[
 B_{p,r}^{s}(\R^d)
 =
 \left\{
 f\in\mathcal S'(\R^d):
 \|f\|_{B_{p,r}^{s}}<\infty
 \right\},
\]
where
\[
 \|f\|_{B_{p,r}^{s}}
 \triangleq
 \|
 \bigl(2^{js}\|\Delta_jf\|_{L^p}\bigr)_{j\ge-1}
 \|_{\ell^r}.
\]

For $T>0$ and $1\le q\le\infty$, we write
$L_T^q(X)=L^q(0,T;X)$.
The Chemin--Lerner norm is defined by
\[
 \|f\|_{\widetilde L_T^q(B_{p,r}^{s})}
 \triangleq
 \|
 \bigl(2^{js}\|\Delta_jf\|_{L_T^q(L^p)}\bigr)_{j\ge-1}
 \|_{\ell^r}.
\]
In particular, Tonelli's theorem gives
\[
 \|f\|_{\widetilde L_T^1(B_{p,1}^{s})}
 =
 \|f\|_{L_T^1(B_{p,1}^{s})}.
\]

We also recall Bony's decomposition
\[
 fg=T_fg+T_gf+R(f,g),
\]
where the paraproduct and remainder operators are given by
\[
 T_fg\triangleq
 \sum_{k\ge-1}S_{k-1}f\,\Delta_kg,
 \qquad
 R(f,g)\triangleq
 \sum_{\substack{k,\ell\ge-1\\|k-\ell|\le1}}
 \Delta_kf\,\Delta_\ell g.
\]
These expressions are well defined for the products
considered below.

Here and below, $D^m$ denotes the collection of derivatives
of order $m$.

\begin{lem}[\textit{Lemma~2.1, \cite{BCD2011}}]
\label{lem:bernstein}
Let $\mathcal C$ be an annulus and $\mathcal B$ a ball.
A constant $C$ exists such that for any nonnegative integer $k$,
any couple $(p,q)$ in $[1,\infty]^2$ with $q\ge p\ge1$,
and any function $u$ of $L^p$, we have
\[
 \operatorname{supp}\widehat u\subset\lambda\mathcal B
 \quad\Longrightarrow\quad
 \|D^ku\|_{L^q}
 \triangleq\sup_{|\alpha|=k}\|\partial^\alpha u\|_{L^q}
 \le C^{k+1}\lambda^{k+d(\frac1p-\frac1q)}
       \|u\|_{L^p},
\]
\[
 \operatorname{supp}\widehat u\subset\lambda\mathcal C
 \quad\Longrightarrow\quad
 C^{-k-1}\lambda^k\|u\|_{L^p}
 \le\|D^ku\|_{L^p}
 \le C^{k+1}\lambda^k\|u\|_{L^p}.
\]
\end{lem}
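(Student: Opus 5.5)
The statement just above is the classical Bernstein lemma (Lemma~2.1 of \cite{BCD2011}). I plan to prove it with the standard argument: reduce by scaling to $\lambda=1$, then represent $\partial^\alpha u$ as a convolution with a fixed Schwartz kernel and apply Young's inequality.

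\textbf{Scaling.} If $\operatorname{supp}\widehat u\subset\lambda\mathcal B$, then $u_\lambda(x)=u(x/\lambda)$ has $\operatorname{supp}\widehat{u_\lambda}\subset\mathcal B$. Moreover $\|u_\lambda\|_{L^p}=\lambda^{d/p}\|u\|_{L^p}$ and $\partial^\alpha u_\lambda=\lambda^{-|\alpha|}(\partial^\alpha u)(\cdot/\lambda)$. Hence the inequalities for $\lambda=1$ give the general ones, with the factor $\lambda^{k+d(1/p-1/q)}$.

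\textbf{First (ball) estimate.} Fix $\phi\in C_c^\infty$ with $\phi\equiv1$ on a neighborhood of $\mathcal B$, and set $g=\mathcal F^{-1}\phi$. Then $u=g*u$, so $\partial^\alpha u=(\partial^\alpha g)*u$. Young's inequality with $1+1/q=1/r+1/p$ gives
\[
\|\partial^\alpha u\|_{L^q}\le\|\partial^\alpha g\|_{L^r}\|u\|_{L^p}.
\]
To control $\|\partial^\alpha g\|_{L^r}$ by $C^{k+1}$, bound both $\|\partial^\alpha g\|_{L^\infty}$ and $\|\partial^\alpha g\|_{L^1}$ and interpolate. For the $L^1$ norm, use
\[
\|h\|_{L^1}\le C\|(1+|x|^2)^{d}h\|_{L^\infty}
\le C\|(I-\Delta_\xi)^d(\xi^\alpha\phi)\|_{L^1}.
\]
Leibniz's rule then gives a bound $C_0^{k+1}$, since $\xi^\alpha$ on the support of $\phi$ is at most $R^k$, and derivatives of $\xi^\alpha$ up to order $2d$ contribute at most polynomial factors $k^{2d}\le C^k$.

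\textbf{Second (annulus) estimate.} The upper bound follows from the first with $q=p$, since an annulus lies in a ball. For the lower bound, use $|\xi|^{2k}=\sum_{|\alpha|=k}\frac{k!}{\alpha!}\xi^{2\alpha}$. Take $\tilde\phi\in C_c^\infty$ supported away from $0$ with $\tilde\phi\equiv1$ near $\mathcal C$, and write
\[
\widehat u=\sum_{|\alpha|=k}\frac{k!}{\alpha!}\,\frac{(-i\xi)^\alpha\tilde\phi(\xi)}{|\xi|^{2k}}\,\widehat{\partial^\alpha u}\cdot(\text{unimodular constant}).
\]
This expresses $u=\sum_\alpha g_\alpha*\partial^\alpha u$ with $g_\alpha=\mathcal F^{-1}\bigl(c_\alpha\xi^\alpha|\xi|^{-2k}\tilde\phi\bigr)$. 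Then
\[
\|u\|_{L^p}\le\sum_{|\alpha|=k}\|g_\alpha\|_{L^1}\,\|D^ku\|_{L^p}.
\]

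\textbf{Main obstacle.} The delicate step is showing $\sum_\alpha\|g_\alpha\|_{L^1}\le C^{k+1}$ with geometric rather than factorial growth in $k$. The multinomial sum $\sum_\alpha k!/\alpha!=d^k$ is fine. The $L^1$ bound via $(I-\Delta_\xi)^d$ on $\xi^\alpha|\xi|^{-2k}\tilde\phi$ involves derivatives of $|\xi|^{-2k}$ up to order $2d$; on the annulus these are $O(k^{2d}c^{-2k})$, which is again geometric in $k$. I would carry out this bookkeeping carefully. Alternatively, I would simply cite \cite{BCD2011} for the constants.
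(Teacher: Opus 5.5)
Your proposal is correct and is the standard proof of Lemma~2.1 in \cite{BCD2011}. You rescale to $\lambda=1$, write $\partial^\alpha u=(\partial^\alpha g)*u$ with $g=\mathcal F^{-1}\phi$, and apply Young's inequality with the kernel bound from $(1+|x|^2)^d\,\partial^\alpha g=\mathcal F^{-1}\bigl((I-\Delta_\xi)^d((i\xi)^\alpha\phi)\bigr)$; the reverse inequality comes from $|\xi|^{2k}=\sum_{|\alpha|=k}\frac{k!}{\alpha!}\xi^{2\alpha}$ with a cutoff $\tilde\phi$ vanishing near the origin. The paper gives no proof of its own and simply cites \cite{BCD2011}, so your argument supplies exactly the omitted proof.
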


Bernstein's inequalities give the embeddings used below:
\begin{equation}\label{eq:prelim-embeddings}
 B_{p,1}^{d/p}\hookrightarrow L^\infty\quad(1\le p<\infty),
 \qquad B_{\infty,1}^{1}\hookrightarrow C_b^1\hookrightarrow\Lip.
\end{equation}
The function series and its first derivative series converge uniformly
for the second embedding. Differentiation also satisfies
\begin{equation}\label{eq:prelim-derivative}
 \|\nabla f\|_{B_{p,1}^{s-1}}\le C\|f\|_{B_{p,1}^{s}},
 \qquad s\in\R,\quad1\le p\le\infty.
\end{equation}
Indeed, apply Bernstein's inequalities to each dyadic block, including the low block,
multiply by $2^{j(s-1)}$, and sum. We also recall the interpolation
theorem used for the convergence rates.

\begin{theo}[\textit{Theorem~2.80, \cite{BCD2011}}]
\label{thm:bcd-interpolation}
If $s_1$ and $s_2$ are real numbers such that $s_1<s_2$,
$\theta\in(0,1)$, and $(p,r)$ is in $[1,\infty]^2$, then we have
\begin{equation}\label{eq:besov-interpolation}
 \|u\|_{B_{p,r}^{\theta s_1+(1-\theta)s_2}}
 \le \|u\|_{B_{p,r}^{s_1}}^\theta
      \|u\|_{B_{p,r}^{s_2}}^{1-\theta}.
\end{equation}
\end{theo}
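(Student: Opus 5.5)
The plan is a direct termwise argument on the dyadic blocks, followed by H\"older's inequality in the sequence space $\ell^r$. Set $s=\theta s_1+(1-\theta)s_2$. We may assume that both norms on the right-hand side of \eqref{eq:besov-interpolation} are finite, since otherwise there is nothing to prove. For each $j\ge-1$ put
\[
 a_j=2^{js_1}\|\Delta_ju\|_{L^p},\qquad b_j=2^{js_2}\|\Delta_ju\|_{L^p}.
\]

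\emph{Step 1: the blockwise identity.} Since $2^{js}=(2^{js_1})^\theta(2^{js_2})^{1-\theta}$ and $\|\Delta_ju\|_{L^p}=\|\Delta_ju\|_{L^p}^\theta\|\Delta_ju\|_{L^p}^{1-\theta}$, we have the exact identity
\[
 2^{js}\|\Delta_ju\|_{L^p}=a_j^\theta b_j^{1-\theta}\qquad\text{for every } j\ge-1 .
\]
No Bernstein inequality or frequency localization is needed here. The low block $j=-1$ is treated in the same way as the others, so the nonhomogeneous setting causes no extra difficulty.

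\emph{Step 2: summation in $\ell^r$.} For $r=\infty$, take the supremum over $j$ in the identity. Since $a_j\le\|a\|_{\ell^\infty}$ and $b_j\le\|b\|_{\ell^\infty}$, we get
\[
 \sup_j a_j^\theta b_j^{1-\theta}\le\|a\|_{\ell^\infty}^\theta\|b\|_{\ell^\infty}^{1-\theta}.
\]
For $1\le r<\infty$, write
\[
 \sum_j a_j^{\theta r}b_j^{(1-\theta)r}
\]
and apply H\"older's inequality for sums with the conjugate exponents $1/\theta$ and $1/(1-\theta)$, to the sequences $(a_j^{\theta r})$ and $(b_j^{(1-\theta)r})$. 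This gives
\[
 \sum_j a_j^{\theta r}b_j^{(1-\theta)r}\le\Bigl(\sum_j a_j^r\Bigr)^{\theta}\Bigl(\sum_j b_j^r\Bigr)^{1-\theta}.
\]
Taking the $1/r$-th power yields $\|u\|_{B^{s}_{p,r}}\le\|u\|_{B^{s_1}_{p,r}}^\theta\|u\|_{B^{s_2}_{p,r}}^{1-\theta}$, with constant exactly $1$.

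There is no genuine obstacle. The only point to check is that H\"older is applied with exponents $1/\theta$ and $1/(1-\theta)$ after raising to the power $r$, so that the constant stays equal to one. The hypothesis $s_1<s_2$ is not used in the estimate; it only ensures that the interpolated index $s$ lies strictly between $s_1$ and $s_2$.
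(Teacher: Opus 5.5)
Your proof is correct. The paper gives no proof of its own and simply cites Bahouri--Chemin--Danchin, where this inequality is likewise obtained directly from H\"older's inequality: the blockwise factorization $2^{js}\|\Delta_ju\|_{L^p}=a_j^\theta b_j^{1-\theta}$ followed by H\"older in $\ell^r$ with exponents $1/\theta$ and $1/(1-\theta)$, which is exactly your argument.
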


\begin{lem}\label{lem:log}
Let $s>0$, $1\le p\le\infty$, and
$f\in B_{p,\infty}^{0}\cap B_{p,\infty}^{s}$. Then
\begin{equation*}
 \|f\|_{B_{p,1}^{0}}
 \le C_s \|f\|_{B_{p,\infty}^{0}}\left(1+\log\left(e+\frac{\|f\|_{B_{p,\infty}^{s}}}{\|f\|_{B_{p,\infty}^{0}}}\right)\right).
\end{equation*}
\end{lem}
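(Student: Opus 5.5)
The plan is to split the $\ell^1$ sum defining $\|f\|_{B_{p,1}^{0}}$ at a frequency threshold $N\ge0$ and estimate the low and high parts separately. Write $A=\|f\|_{B_{p,\infty}^{0}}=\sup_{j}\|\Delta_jf\|_{L^p}$ and $B=\|f\|_{B_{p,\infty}^{s}}=\sup_j 2^{js}\|\Delta_jf\|_{L^p}$. If $A=0$, then every block vanishes, so $f=0$ and there is nothing to prove; assume $A>0$ from now on. Note also that $B\ge 2^{-s}A$ in any case, because $2^{js}\ge 2^{-s}$ for all $j\ge-1$.

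For any integer $N\ge-1$ I split
\[
\|f\|_{B_{p,1}^{0}}=\sum_{j=-1}^{N}\|\Delta_jf\|_{L^p}+\sum_{j>N}\|\Delta_jf\|_{L^p}
\le (N+2)A+\sum_{j>N}2^{-js}B .
\]
The tail is a geometric series, so
\[
\sum_{j>N}2^{-js}B=\frac{2^{-(N+1)s}}{1-2^{-s}}\,B\le C_s2^{-Ns}B .
\]
This gives the basic inequality
\[
\|f\|_{B_{p,1}^{0}}\le (N+2)A+C_s2^{-Ns}B .
\]

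Next I choose $N$ to balance the two terms. Take $N$ to be the smallest integer $\ge0$ with $2^{Ns}\ge B/A$, that is, $N=\max\bigl(0,\lceil s^{-1}\log_2(B/A)\rceil\bigr)$. With this choice the tail is at most $C_sA$. For the low part, $N\le 1+s^{-1}\log_2(B/A)$ whenever $B/A>1$, and $N=0$ otherwise. Since $\log_2 x\le C\log(e+x)$ for $x>0$, it follows that
\[
N+2\le C_s\left(1+\log\left(e+\frac BA\right)\right).
\]
Adding the two bounds gives the claim, with a constant depending only on $s$.

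The only point needing care is the case $B/A\le1$ (or $B/A$ below $2^s$), where the logarithm is not large; taking $N=0$ handles it directly. Besides that, one only needs $f\in B^0_{p,\infty}\cap B^s_{p,\infty}$, so that $A$ and $B$ are finite and the splitting is justified. I do not expect any genuine obstacle.
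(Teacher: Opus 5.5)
Your proof is correct and is essentially the paper's argument: split the $\ell^1$ sum at a threshold $N$, bound the low part by $(N+O(1))\|f\|_{B^0_{p,\infty}}$ and the tail by a geometric series in $2^{-js}\|f\|_{B^s_{p,\infty}}$, then take $N\approx s^{-1}\log_2$ of the ratio of the two norms. The only difference is cosmetic: the paper picks $N=1+\lfloor s^{-1}\log_2(e+B/A)\rfloor$, so the $e+$ inside the logarithm absorbs the small-ratio case, whereas you handle $B/A\le 1$ separately by taking $N=0$.
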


\begin{proof}
%We may assume that $\|f\|_{B_{p,\infty}^{0}}>0$.
Choosing a positive integer $N$, which will be determined later, we have
\[
\begin{aligned}
\|f\|_{B_{p,1}^{0}}
&=\sum_{-1\le j\le N-1}\|\Delta_jf\|_{L^p}
  +\sum_{j\ge N}\|\Delta_jf\|_{L^p}\\
&\le (N+1)\|f\|_{B_{p,\infty}^{0}}
  +\frac{2^{-(N-1)s}}{2^s-1}\|f\|_{B_{p,\infty}^{s}},
\end{aligned}
\]
Taking
\[
N=1+\left\lfloor
\frac{1}{s}\log_2\left(
e+\frac{\|f\|_{B_{p,\infty}^{s}}}
       {\|f\|_{B_{p,\infty}^{0}}}
\right)\right\rfloor
\]
yields the desired inequality.
\end{proof}

\begin{cor}[\textit{Corollary~2.86, \cite{BCD2011}}]
\label{lem:bony}
For any positive real number $s$ and any $(p,r)$ in $[1,\infty]^2$,
the space $L^\infty\cap B_{p,r}^{s}$ is an algebra, and a constant
$C$ exists such that
\begin{equation}\label{eq:c5-tame-tools}
 \|uv\|_{B_{p,r}^{s}}
 \le \frac{C^{s+1}}{s}
 \left(
 \|u\|_{L^\infty}\|v\|_{B_{p,r}^{s}}
 +\|u\|_{B_{p,r}^{s}}\|v\|_{L^\infty}
 \right).
\end{equation}
\end{cor}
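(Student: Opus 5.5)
The statement just above is Corollary~2.86 of \cite{BCD2011}: the tame product estimate in $L^\infty\cap B_{p,r}^{s}$ for $s>0$. I would prove it with Bony's decomposition $uv=T_uv+T_vu+R(u,v)$, estimating each of the three pieces separately in $B_{p,r}^{s}$.

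\textbf{Paraproduct $T_uv$.} Each term $S_{k-1}u\,\Delta_kv$ has Fourier support in an annulus $2^k\tilde{\mathcal C}$. Hence $\Delta_j(T_uv)$ only involves indices with $|k-j|\le N_0$, for a fixed $N_0$. Using $\|S_{k-1}u\|_{L^\infty}\le C\|u\|_{L^\infty}$, this gives
\[
2^{js}\|\Delta_j T_uv\|_{L^p}\le C\|u\|_{L^\infty}\sum_{|k-j|\le N_0}2^{(j-k)s}\,2^{ks}\|\Delta_kv\|_{L^p}.
\]
Taking the $\ell^r$ norm of this finite convolution yields $C^{s+1}\|u\|_{L^\infty}\|v\|_{B_{p,r}^{s}}$. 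The factor $2^{N_0 s}$ is where a power like $C^{s}$ enters.

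\textbf{Paraproduct $T_vu$.} This is symmetric and gives $C^{s+1}\|v\|_{L^\infty}\|u\|_{B_{p,r}^{s}}$.

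\textbf{Remainder $R(u,v)$.} This is the step where $s>0$ is needed and where the factor $1/s$ arises. Each term $\Delta_ku\,\tilde\Delta_kv$, with $\tilde\Delta_k=\Delta_{k-1}+\Delta_k+\Delta_{k+1}$, has Fourier support only in a ball $2^k\mathcal B$. Hence $\Delta_j$ sees only the terms with $k\ge j-N_1$. This gives
\[
2^{js}\|\Delta_jR(u,v)\|_{L^p}\le C\|v\|_{L^\infty}\sum_{k\ge j-N_1}2^{(j-k)s}\,2^{ks}\|\Delta_ku\|_{L^p},
\]
since $\|\tilde\Delta_kv\|_{L^\infty}\le C\|v\|_{L^\infty}$. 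The kernel $2^{(j-k)s}\mathbf 1_{k\ge j-N_1}$ has $\ell^1$ norm at most $2^{N_1s}\sum_{m\ge0}2^{-ms}=2^{N_1s}(1-2^{-s})^{-1}$. We have $(1-2^{-s})^{-1}\le C(1+1/s)\le C^{s+1}/s$, after enlarging $C$ if needed. Young's inequality for series then gives $\|R(u,v)\|_{B_{p,r}^{s}}\le \frac{C^{s+1}}{s}\|v\|_{L^\infty}\|u\|_{B_{p,r}^{s}}$.

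\textbf{Conclusion.} Summing the three bounds gives \eqref{eq:c5-tame-tools}. The algebra property follows because the product also lies in $L^\infty$, with $\|uv\|_{L^\infty}\le\|u\|_{L^\infty}\|v\|_{L^\infty}$. The only delicate point is the remainder, whose summation over high frequencies converges precisely because $s>0$. That remainder bound is the key step, and the rest is routine bookkeeping of the support constants.
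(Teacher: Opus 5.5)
Your proposal is correct and follows essentially the same route as the cited result: the paper gives no proof of its own and invokes \cite[Corollary~2.86]{BCD2011}, which is derived exactly as you do, from Bony's decomposition, the paraproduct bound $\|T_uv\|_{B_{p,r}^{s}}\le C^{1+s}\|u\|_{L^\infty}\|v\|_{B_{p,r}^{s}}$, and the remainder bound applied with $v\in L^\infty\hookrightarrow B_{\infty,\infty}^{0}$, where $s>0$ is needed and the factor $1/s$ comes from $\sum_{m\ge0}2^{-ms}$. The points you leave implicit, namely convergence of the remainder series and the identity $uv=T_uv+T_vu+R(u,v)$, are routine for $s>0$, since $\sum_{k}\|\Delta_ku\,\tilde\Delta_kv\|_{L^p}\lesssim\|v\|_{L^\infty}\|u\|_{B_{p,r}^{s}}\sum_{k\ge-1}2^{-ks}<\infty$.
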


\begin{lem}\label{lem:elliptic}
For $s\in\R$ and $1\le p\le\infty$,
\begin{equation}\label{eq:c5-multiplier-tools}
 \|(I-\Delta)^{-1}f\|_{B_{p,1}^{s+2}}+
 \|\nabla (I-\Delta)^{-1}f\|_{B_{p,1}^{s+1}}+
 \|\Delta (I-\Delta)^{-1}f\|_{B_{p,1}^{s}}
 \le C_s\|f\|_{B_{p,1}^{s}}.
\end{equation}
At the $L^p$ level,
\begin{equation}\label{eq:c5-Linf-tools}
 \|(I-\Delta)^{-1}f\|_{L^p}\le\|f\|_{L^p},\quad
 \|\nabla (I-\Delta)^{-1}f\|_{L^p}\le C\|f\|_{L^p},\quad
 \|\Delta (I-\Delta)^{-1}f\|_{L^p}\le2\|f\|_{L^p}.
\end{equation}
\end{lem}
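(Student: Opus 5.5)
The plan is to prove both parts of Lemma~\ref{lem:elliptic} from the symbols of the operators, working block by block for the Besov estimate and with an explicit kernel for the $L^p$ estimate.

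\textbf{Besov estimate.} The operators $(I-\Delta)^{-1}$, $\nabla(I-\Delta)^{-1}$ and $\Delta(I-\Delta)^{-1}$ are Fourier multipliers with symbols
\[
m_0(\xi)=(1+|\xi|^2)^{-1},\qquad m_1(\xi)=i\xi(1+|\xi|^2)^{-1},\qquad m_2(\xi)=-|\xi|^2(1+|\xi|^2)^{-1}.
\]
Each $m_k$ is smooth and is a symbol of order $-2$, $-1$, $0$, respectively. I will show that, uniformly in $j$,
\[
\|\Delta_j m_0(D)f\|_{L^p}\le C2^{-2j}\|\Delta_jf\|_{L^p},\qquad \|\Delta_j m_1(D)f\|_{L^p}\le C2^{-j}\|\Delta_jf\|_{L^p},\qquad \|\Delta_j m_2(D)f\|_{L^p}\le C\|\Delta_jf\|_{L^p}.
\]

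\emph{Blocks $j\ge0$.} Let $\tilde\varphi$ equal $1$ on the support of $\varphi$ and be supported in a slightly larger annulus. Then
\[
\Delta_j m_k(D)f=K_{j,k}*\Delta_jf,\qquad K_{j,k}=\mathcal F^{-1}\bigl(\tilde\varphi(2^{-j}\cdot)\,m_k\bigr).
\]
After rescaling $\xi=2^j\eta$, we have $K_{j,k}(x)=2^{jd}\,\mathcal F^{-1}(g_{j,k})(2^jx)$ with $g_{j,k}(\eta)=\tilde\varphi(\eta)\,m_k(2^j\eta)$. On the annulus, every derivative of $g_{j,k}$ is bounded by $C2^{-(2-k)j}$; this is the symbol-of-order $-(2-k)$ property. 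Integrating by parts $d+1$ times then gives the pointwise bound $|\mathcal F^{-1}g_{j,k}(y)|\le C2^{-(2-k)j}(1+|y|)^{-d-1}$, hence
\[
\|K_{j,k}\|_{L^1}\le C2^{-(2-k)j}.
\]
Young's inequality now yields the block bound for every $1\le p\le\infty$.

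\emph{Block $j=-1$.} Here I use a ball cutoff instead, and the kernel is a fixed Schwartz function with finite $L^1$ norm.

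\emph{Summation.} Multiplying the block bounds by $2^{j(s+2-k)}$ and summing in $\ell^1$ gives \eqref{eq:c5-multiplier-tools}, with a constant that in fact does not depend on $s$.

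\textbf{$L^p$ estimate.} Here I use the Bessel kernel $G=\mathcal F^{-1}\bigl((1+|\xi|^2)^{-1}\bigr)$. It can be written through the subordination formula
\[
G=\int_0^\infty e^{-t}\,(4\pi t)^{-d/2}e^{-|x|^2/4t}\,dt,
\]
which shows that $G\ge0$ and $\int G=1$. Young's inequality then gives the first bound with constant $1$.

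For the third bound, write $\Delta(I-\Delta)^{-1}=(I-\Delta)^{-1}-I$, so the constant is $1+1=2$.

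For the gradient, $\nabla(I-\Delta)^{-1}f=\nabla G*f$. Differentiating under the integral gives
\[
|\nabla G(x)|\le\int_0^\infty e^{-t}\,(4\pi t)^{-d/2}\,\frac{|x|}{2t}\,e^{-|x|^2/4t}\,dt.
\]
Its $L^1$ norm is at most $C\int_0^\infty e^{-t}t^{-1/2}\,dt<\infty$, because the $x$-integral of the heat-kernel gradient is $C t^{-1/2}$. Young's inequality finishes the bound.

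The main point needing care is this integrability of $\nabla G$: the singularity near $x=0$ is like $|x|^{1-d}$, which is integrable, and the decay at infinity is exponential. The subordination formula handles both cleanly, since the time integral $\int_0^\infty e^{-t}t^{-1/2}\,dt$ converges at both ends. No such issue arises in the Besov part, where everything is uniform in $j$.
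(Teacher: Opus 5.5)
Your proposal is correct and takes essentially the same route as the paper. For the Besov bounds you prove block-by-block multiplier estimates for the three smooth symbols through the rescaled kernel argument, where the paper simply cites the standard result. At the $L^p$ level you use the subordination formula $(I-\Delta)^{-1}=\int_0^\infty e^{-t}e^{t\Delta}\,dt$ with the $t^{-1/2}$ bound on the heat-kernel gradient and the identity $\Delta(I-\Delta)^{-1}=(I-\Delta)^{-1}-I$, exactly as the paper does; your remark that the Besov constant can be taken independent of $s$ is also correct.
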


\begin{proof}
The first estimates follow from \cite{BCD2011}
applied to the smooth symbols
$(1+|\xi|^2)^{-1}$, $i\xi(1+|\xi|^2)^{-1}$, and
$-|\xi|^2(1+|\xi|^2)^{-1}$ on each dyadic block, including the low block.
For \eqref{eq:c5-Linf-tools}, use
\[
 (I-\Delta)^{-1}f=\int_0^\infty e^{-t}e^{t\Delta}f\,dt,
 \qquad \|\nabla e^{t\Delta}f\|_{L^p}
 \le Ct^{-1/2}\|f\|_{L^p}.
\]
The Laplacian estimate follows from the exact identity $\Delta (I-\Delta)^{-1}=(I-\Delta)^{-1}-I$.
\end{proof}

Consider
\begin{equation}\label{transport}
 \partial_tf+a\cdot\nabla f-\ep\Delta f=h,\qquad f(0)=f_0,
 \qquad \ep\ge0,
\end{equation}
where $a\in C^{0,1}$, with the displayed norms
integrable in time. All estimates below may first be proved for smooth
solutions, then passed to solutions of the corresponding linear equation
by regularization and lower semicontinuity.

\begin{lem}[\textit{Proof of Proposition~2.1, \cite{Danchin2005}}]
\label{lem:scalar-estimate}
Let $1\le p\le\infty$. Solutions of \eqref{transport} satisfy
\[
 \|f(t)\|_{L^p}
 \le \|f_0\|_{L^p}
 +\int_0^t\|h(\tau)\|_{L^p}\,d\tau
 +\frac1p\int_0^t\|\diver a(\tau)\|_{L^\infty}
                     \|f(\tau)\|_{L^p}\,d\tau.
\]
Here $1/p=0$ when $p=\infty$.
The estimate is uniform for $\ep\ge0$.
\end{lem}

\begin{lem}[\textit{Lemma~2.100,\cite{BCD2011}}]
\label{lem:bcd-commutator}\label{lem:endpoint-transport}
Let $v$ be a smooth vector field and $f$ a smooth scalar function
on $\R^d$. For $j\ge-1$, set
\[
R_j=[v\cdot\nabla,\Delta_j]f.
\]
For $1\le p<\infty$ and $0\le\sigma<1+d/p$, excluding
$(p,\sigma)=(1,0)$, we have
\[
\sum_{j\ge-1}2^{j\sigma}\|R_j\|_{L^p}
\le C\|\nabla v\|_{B_{p,1}^{d/p}}
      \|f\|_{B_{p,1}^{\sigma}}.
\]
For $1\le p\le\infty$ and $\sigma>0$, we also have
\begin{equation}\label{eq:bcd-commutator-tame}
\begin{aligned}
\sum_{j\ge-1}2^{j\sigma}\|R_j\|_{L^p}
\le C\left(
\|\nabla v\|_{L^\infty}\|f\|_{B_{p,1}^{\sigma}}
+\|\nabla f\|_{L^\infty}
 \|\nabla v\|_{B_{p,1}^{\sigma-1}}
\right).
\end{aligned}
\end{equation}
The constants depend only on $d,p,\sigma$.
\end{lem}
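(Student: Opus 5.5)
The statement to prove is the commutator lemma (Lemma~\ref{lem:bcd-commutator}), which is quoted from \cite{BCD2011}. I would prove it through Bony's decomposition of the commutator. Write $v\cdot\nabla f=\sum_k v^k\partial_k f$. Since $\Delta_j$ commutes with derivatives, we have $R_j=\sum_k\bigl(v^k\Delta_j\partial_kf-\Delta_j(v^k\partial_kf)\bigr)$. Decomposing both products by paraproducts gives
\[
R_j=[T_{v^k},\Delta_j]\partial_kf+T_{\partial_k\Delta_jf}v^k-\Delta_jT_{\partial_kf}v^k+R(v^k,\Delta_j\partial_kf)-\Delta_jR(v^k,\partial_kf).
\]
Each term is then estimated separately.

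\emph{The paraproduct commutator.} By the support properties, only $|j-k'|\le4$ contributes in $[T_{v},\Delta_j]\partial f=\sum_{k'}[S_{k'-1}v,\Delta_j]\Delta_{k'}\partial f$. Writing $\Delta_j$ as convolution with $2^{jd}h(2^j\cdot)$, the first-order Taylor formula gives
\[
\|[S_{k'-1}v,\Delta_j]g\|_{L^p}\le C2^{-j}\|\nabla S_{k'-1}v\|_{L^\infty}\|g\|_{L^p}.
\]
With $g=\Delta_{k'}\partial f$ and Bernstein ($\|\Delta_{k'}\partial f\|_{L^p}\lesssim2^{k'}\|\Delta_{k'}f\|_{L^p}$), this term is bounded by $\|\nabla v\|_{L^\infty}\,c_j2^{-j\sigma}\|f\|_{B^\sigma_{p,1}}$ with $\sum c_j\le1$. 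Using $\|\nabla v\|_{L^\infty}\lesssim\|\nabla v\|_{B^{d/p}_{p,1}}$ covers both estimates.

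\emph{The terms $T_{\partial\Delta_jf}v$ and $\Delta_jT_{\partial f}v$.} For the tame estimate I would bound $S_{k-1}\partial f$ in $L^\infty$ by $\|\nabla f\|_{L^\infty}$ and $\Delta_kv$ in $L^p$. For $k\ge0$, Bernstein gives $\|\Delta_kv\|_{L^p}\lesssim 2^{-k}\|\Delta_k\nabla v\|_{L^p}$, and summing with weight $2^{j\sigma}$ yields $\|\nabla f\|_{L^\infty}\|\nabla v\|_{B^{\sigma-1}_{p,1}}$. The low block $k=-1$ is the delicate point, since there $v$ itself appears rather than $\nabla v$. I expect it can be removed by noting that only differences $S_{k-1}$ versus $\Delta_j$ of low frequencies appear, or else absorbed because $\Delta_{-1}v$ paired with high frequencies of $\partial f$ cancels in the commutator structure. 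For the non-tame estimate, I would instead put $\partial f$ in an $L^{p^*}$-type space via Bernstein and $\Delta_kv$ in $L^\infty$-type space. This is where the restriction $\sigma<1+d/p$ enters: the sum $\sum_{k'\le k}2^{k'(1-\sigma+d/p)}\cdots$ needs a positive exponent, or else one must stay on the diagonal.

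\emph{The remainders.} For $R(v,\partial f)$, the blocks $\Delta_k v\,\widetilde\Delta_k\partial f$ have frequencies $\lesssim2^k$, and only $k\ge j-3$ contributes after applying $\Delta_j$. The low-frequency part involving $\Delta_{-1}v$ has to be grouped with the corresponding piece of $R(v,\Delta_j\partial f)$ to form a commutator, which is handled as in the first term. For $k\ge0$, we have $2^{j\sigma}\|\Delta_j(\Delta_kv\,\widetilde\Delta_k\partial f)\|_{L^p}\lesssim 2^{(j-k)\sigma}\|\Delta_k\nabla v\|_{L^\infty}2^{k\sigma}\|\widetilde\Delta_kf\|_{L^p}$, which is summable since $\sigma>0$. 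For the $B^{d/p}_{p,1}$ version with $\sigma=0$, $p>1$, I would instead use Bernstein from $L^{p/2}$ to $L^p$, gaining $2^{jd/p}$. This is why $(p,\sigma)=(1,0)$ is excluded.

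The main obstacle is the bookkeeping of the low-frequency block $\Delta_{-1}v$, which is not controlled by $\nabla v$. I would handle it by keeping it inside genuine commutators $[\Delta_{-1}v,\Delta_j]$ and applying the Taylor argument, which only sees $\nabla\Delta_{-1}v$. This is exactly how \cite{BCD2011} proceeds, so the result can ultimately be cited.
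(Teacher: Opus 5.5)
Your route is the one used for Lemma~2.100 in \cite{BCD2011}, which the paper quotes without proof, and most of your estimates are right. This includes the Taylor bound for $[S_{k'-1}v,\Delta_j]$, the paraproduct $T_{\partial\Delta_jf}v$, the $\sigma>0$ remainder bound, and your regrouping of the $\Delta_{-1}v$ parts of the two remainders into $[\Delta_{-1}v^k,\Delta_j](\Delta_{-1}+\Delta_0)\partial_kf$. That regrouping is an exact identity and is nonzero only for finitely many $j$.

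The low block you call delicate in $T_{\partial\Delta_jf}v$ and $\Delta_jT_{\partial f}v$ does not occur. With the paper's conventions $S_{-1}=S_{-2}=0$, so $T_gv=\sum_{k\ge1}S_{k-1}g\,\Delta_kv$ contains only annular blocks of $v$, for which $\|\Delta_kv\|_{L^q}\lesssim2^{-k}\|\Delta_k\nabla v\|_{L^q}$. Your two speculative remedies are therefore not needed.

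For the non-tame bound on $\Delta_jT_{\partial f}v$, your exponent $1-\sigma+d/p$ is the right one, but it comes from the opposite pairing to the one you describe. Put $S_{k-1}\nabla f$ in $L^\infty$ (Bernstein from $L^p$) and $\Delta_kv$ in $L^p$. This gives the factor $2^{(k'-k)(1+d/p-\sigma)}$ with $k'<k$, and only $\nabla v\in B^{d/p}_{p,\infty}$ is needed. Putting $\Delta_kv$ in $L^\infty$ restricts you to $\sigma<1$.

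The step that fails as written is the $\sigma=0$ remainder $\Delta_jR(v,\partial f)$ in the first estimate. Passing from $L^{p/2}$ to $L^p$ requires $p/2\ge1$: otherwise $\Delta_j$ is not bounded on $L^{p/2}$ and Lemma~\ref{lem:bernstein} does not apply. So your argument only covers $p\ge2$.

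For $1<p<2$, go from $L^1$ to $L^p$ instead, gaining $2^{jd/p'}$. Use $\|\Delta_kv\,\widetilde\Delta_k\partial f\|_{L^1}\le\|\Delta_kv\|_{L^{p'}}\|\widetilde\Delta_k\partial f\|_{L^p}$ together with $\|\Delta_kv\|_{L^{p'}}\lesssim2^{kd(1/p-1/p')}2^{-k}\|\Delta_k\nabla v\|_{L^p}$. This yields the bound $2^{(j-k)d/p'}\bigl(2^{kd/p}\|\Delta_k\nabla v\|_{L^p}\bigr)\|\widetilde\Delta_kf\|_{L^p}$, which sums over $j\le k+3$ because $d/p'>0$.

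In general the gain is $2^{(j-k)d\min\{1/p,1/p'\}}$, and it degenerates exactly at $p=1$. That is the real reason $(p,\sigma)=(1,0)$ is excluded; your $L^{p/2}$ argument would exclude every $p<2$. This is the same case split the paper makes in Lemma~\ref{lem:critical-multiplier}.
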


For bounded Lipschitz $a,f$ with the norms below finite,
taking $p=\infty$ and $\sigma=s>0$ in
\eqref{eq:bcd-commutator-tame}, and using
\eqref{eq:prelim-derivative}, gives
\begin{equation}\label{eq:c5-commutator-tools}
\begin{aligned}
\sum_{j\ge-1}2^{js}\|[a\cdot\nabla,\Delta_j]f\|_{L^\infty}
\le C_s\left(
\|\nabla a\|_{L^\infty}\|f\|_{B_{\infty,1}^{s}}
+\|\nabla f\|_{L^\infty}\|a\|_{B_{\infty,1}^{s}}
\right).
\end{aligned}
\end{equation}
Applying the dyadic blocks to \eqref{transport}, using
Lemma~\ref{lem:scalar-estimate} with $p=\infty$, and summing
with weights $2^j$, we obtain
\begin{equation}\label{eq:c5-critical-transport-tools}
\begin{aligned}
\|f(t)\|_{B_{\infty,1}^{1}}
\le \|f_0\|_{B_{\infty,1}^{1}}
+\int_0^t\|h(\tau)\|_{B_{\infty,1}^{1}}\,d\tau+C\int_0^t\|a(\tau)\|_{B_{\infty,1}^{1}}\|f(\tau)\|_{B_{\infty,1}^{1}}\,d\tau,
\end{aligned}
\end{equation}
uniformly for $\ep\ge0$.
Here we have used the embedding
$B_{\infty,1}^{1}\hookrightarrow W^{1,\infty}$
to control both Lipschitz seminorms.

\begin{lem}[\textit{Theorem~3.14, \cite{BCD2011}}]
\label{lem:transport-estimate}
Let $1\le p\le p_1\le\infty$ and $1\le r\le\infty$. Assume that
\begin{equation}\label{eq:bcd-transport-index}
\sigma\ge-d\min\left\{\frac1{p_1},\frac1{p'}\right\}
\quad\text{or}\quad
\sigma\ge-1-d\min\left\{\frac1{p_1},\frac1{p'}\right\}
\quad\text{if }\diver a=0,
\end{equation}
with strict inequality if $r<\infty$.

There exists a constant $C=C(d,p,p_1,r,\sigma)$,
such that for all solutions
$f\in L^\infty([0,T];B_{p,r}^{\sigma})$
of \eqref{transport} with $\ep=0$, initial data
$f_0\in B_{p,r}^{\sigma}$, and
$h\in L^1([0,T];B_{p,r}^{\sigma})$, we have, for almost every
$t\in[0,T]$,
\begin{equation}\label{eq:bcd-transport-estimate}
\begin{aligned}
\|f\|_{\widetilde L_t^\infty(B_{p,r}^{\sigma})}
&\le
\left(
\|f_0\|_{B_{p,r}^{\sigma}}
+\int_0^t e^{-CV_{p_1}(\tau)}
\|h(\tau)\|_{B_{p,r}^{\sigma}}\,d\tau
\right)e^{CV_{p_1}(t)}
\end{aligned}
\end{equation}
with, if the inequality is strict in
\eqref{eq:bcd-transport-index},
\[
V_{p_1}'(t)=
\begin{cases}
\displaystyle
\|\nabla a(t)\|_{B_{p_1,\infty}^{d/p_1}\cap L^\infty},
&\displaystyle \sigma<1+\frac d{p_1},\\[3mm]
\displaystyle
\|\nabla a(t)\|_{B_{p_1,r}^{\sigma-1}},
&
\displaystyle \sigma>1+\frac d{p_1},
\quad\text{or}\quad \{
\sigma=1+\frac d{p_1}\ \text{ and }r=1\}
\end{cases}
\]
and, if equality holds in \eqref{eq:bcd-transport-index} and $r=\infty$,
then
\[
V_{p_1}'(t)
=\|\nabla a(t)\|_{B_{p_1,1}^{d/p_1}}.
\]
If $f=a$, then for all $\sigma>0$
($\sigma>-1$ if $\diver a=0$),
the estimate \eqref{eq:bcd-transport-estimate} holds with
\[
V_{p_1}'(t)=\|\nabla a(t)\|_{L^\infty}.
\]
\end{lem}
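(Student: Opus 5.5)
The statement is the classical a priori estimate for the free transport equation. I would prove it by the standard Littlewood--Paley localization of the equation, an $L^p$ energy bound on each block, and a commutator estimate. Throughout I assume $f$, $a$, $h$ are smooth; the general case then follows by Friedrichs regularization and lower semicontinuity, as indicated before Lemma~\ref{lem:scalar-estimate}.

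\textbf{Step 1: localized equation and block estimate.} Applying $\Delta_j$ to \eqref{transport} with $\ep=0$ gives
\[
\partial_t\Delta_jf+a\cdot\nabla\Delta_jf=\Delta_jh+R_j,\qquad R_j=[a\cdot\nabla,\Delta_j]f.
\]
Lemma~\ref{lem:scalar-estimate} then yields
\[
\|\Delta_jf(t)\|_{L^p}\le\|\Delta_jf_0\|_{L^p}+\int_0^t\Bigl(\|\Delta_jh\|_{L^p}+\|R_j\|_{L^p}+\tfrac1p\|\diver a\|_{L^\infty}\|\Delta_jf\|_{L^p}\Bigr)d\tau.
\]
The last term is bounded by $V_{p_1}'$ in every case, since $\|\nabla a\|_{L^\infty}$ is controlled by each of the listed norms: by the $L^\infty$ part, by the embedding of $B^{\sigma-1}_{p_1,r}$ when $\sigma>1+d/p_1$, or by the embedding of $B^{d/p_1}_{p_1,1}$. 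When $\diver a=0$ this term vanishes.

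\textbf{Step 2: commutator bound (the main obstacle).} I need
\[
2^{j\sigma}\|R_j\|_{L^p}\le C\,c_j(t)\,V_{p_1}'(t)\,\|f(t)\|_{B^\sigma_{p,r}},\qquad \|(c_j)\|_{\ell^r}\le1.
\]
This is a refinement of Lemma~\ref{lem:bcd-commutator} to general $(p,p_1,r)$. I would write $a\cdot\nabla=\sum_k a^k\partial_k$ and use Bony's decomposition to split $R_j$ into four pieces:
\[
[T_{a^k},\Delta_j]\partial_kf,\qquad T_{\partial_k\Delta_jf}a^k,\qquad -\Delta_jT_{\partial_kf}a^k,\qquad \text{the remainder pieces } R(a^k,\partial_k\Delta_jf)-\Delta_jR(a^k,\partial_kf).
\]
\begin{enumerate}[(i)]
\item The paraproduct commutator is handled by the first-order Taylor formula on the kernel of $\Delta_j$. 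It gives $\|\nabla S_{k-1}a\|_{L^\infty}\|\Delta_kf\|_{L^p}$ with $|k-j|\le4$, so it needs only $\|\nabla a\|_{L^\infty}$.
\item The terms $T_{\nabla f}a$ and the remainders are where the index conditions enter. With $1/p_2=1/p-1/p_1$, I use H\"older in $L^{p_1}\times L^{p_2}$ and Bernstein's inequality (Lemma~\ref{lem:bernstein}).
\item The low-frequency sums require $\sigma<1+d/p_1$, unless one places $\nabla a$ in $B^{\sigma-1}_{p_1,r}$ (or in the endpoint space with $r=1$).
\item The high-frequency remainder sum requires $\sigma>-d\min(1/p_1,1/p')$. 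Under $\diver a=0$ one writes $\sum_k\partial_k(\cdot)$ to gain one derivative, which relaxes this to $\sigma>-1-d\min(1/p_1,1/p')$.
\item In the equality case with $r=\infty$, the remainder sum is only bounded when $\nabla a$ is in $B^{d/p_1}_{p_1,1}$, which explains the modified $V'$.
\end{enumerate}
This bookkeeping of indices is the delicate part, and I would organize it as in the proof of Lemma~2.100 of \cite{BCD2011}.

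\textbf{Step 3: summation and Gronwall.} I take the supremum over $t$ inside each block, multiply by $2^{j\sigma}$, and take the $\ell^r$ norm. Minkowski's inequality lets the time integral pass outside the $\ell^r$ norm, giving
\[
\|f\|_{\widetilde L^\infty_t(B^\sigma_{p,r})}\le\|f_0\|_{B^\sigma_{p,r}}+\int_0^t\|h\|_{B^\sigma_{p,r}}\,d\tau+C\int_0^tV_{p_1}'\,\|f\|_{B^\sigma_{p,r}}\,d\tau.
\]
To obtain the precise weighted form \eqref{eq:bcd-transport-estimate}, I first apply the estimate to $e^{-CV_{p_1}(t)}f$. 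Then Gronwall's lemma gives the stated inequality, using $\|f(\tau)\|_{B^\sigma_{p,r}}\le\|f\|_{\widetilde L^\infty_\tau(B^\sigma_{p,r})}$.

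\textbf{Step 4: the case $f=a$.} Here the dangerous term in the commutator is $T_{\nabla f}a=T_{\nabla a}a$. It is bounded by
\[
\|\nabla a\|_{L^\infty}\|a\|_{B^\sigma_{p,r}}
\]
for every $\sigma$, because the low-frequency factor $\nabla a$ is estimated in $L^\infty$. The remainder term is bounded by the same quantity for $\sigma>0$, or for $\sigma>-1$ when $\diver a=0$ using the divergence structure. Hence $V'=\|\nabla a\|_{L^\infty}$ suffices, and Steps 1 and 3 go through unchanged.
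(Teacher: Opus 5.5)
Your proposal is correct and takes the same approach as the source. The paper gives no proof of its own and simply quotes Theorem~3.14 of Bahouri--Chemin--Danchin, whose proof is exactly this: block localization, the $L^p$ estimate on each block, the commutator bound (their Lemma~2.100 in its full $(p,p_1,r)$ form), $\ell^r$ summation via Minkowski, and Gronwall. One small simplification: the passage through $e^{-CV_{p_1}(t)}f$ in Step~3 is unnecessary, because the integral Gronwall lemma applied directly to $t\mapsto\|f\|_{\widetilde L_t^\infty(B_{p,r}^{\sigma})}$ already yields \eqref{eq:bcd-transport-estimate} with the same constant $C$ in both exponentials.
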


\begin{lem}[\textit{Theorem~3.38, \cite{BCD2011}}]
\label{thm:bcd-diffusion}\label{lem:transport-diffusion}
Let $1\le p_1\le p\le\infty$, $1\le r\le\infty$, and let
$s\in\mathbb R$ satisfy \eqref{eq:bcd-transport-index}
with $\sigma$ replaced by $s$, with strict inequality if $r<\infty$.
Let $V_{p_1}$ be defined as in Lemma~\ref{lem:transport-estimate},
with $\sigma=s$.

There exists a constant $C$, depending only on
$d,r,s$ and $s-1-d/p_1$, such that for any smooth solution
$f$ of \eqref{transport} and any
$1\le\rho_1\le\rho\le\infty$, we have
\[
\begin{aligned}
\ep^{1/\rho}
\|f\|_{\widetilde L_T^\rho(B_{p,r}^{s+2/\rho})}
&\le
Ce^{C(1+\ep T)^{1/\rho}V_{p_1}(T)}
\Biggl(
(1+\ep T)^{1/\rho}\|f_0\|_{B_{p,r}^{s}}\\
&\qquad\qquad
+(1+\ep T)^{1+1/\rho-1/\rho_1}
\ep^{1/\rho_1-1}
\|h\|_{\widetilde L_T^{\rho_1}
(B_{p,r}^{s-2+2/\rho_1})}
\Biggr).
\end{aligned}
\]
\end{lem}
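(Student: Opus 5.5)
This estimate is Theorem~3.38 of \cite{BCD2011}, so in the paper it would simply be quoted. If I were to reprove it, I would localize in frequency and combine a blockwise parabolic decay estimate with the commutator bounds behind Lemma~\ref{lem:transport-estimate}.

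\emph{Step 1 (localization).} Apply $\Delta_j$ to \eqref{transport}. Then $f_j=\Delta_jf$ solves
\[
\partial_tf_j+a\cdot\nabla f_j-\ep\Delta f_j=\Delta_jh+R_j,\qquad R_j=[a\cdot\nabla,\Delta_j]f.
\]
The commutator bound from the proof of Lemma~\ref{lem:transport-estimate}, under \eqref{eq:bcd-transport-index} with $\sigma=s$, gives
\[
2^{js}\|R_j(t)\|_{L^p}\le C\,c_j(t)\,V_{p_1}'(t)\,\|f(t)\|_{B_{p,r}^{s}},\qquad \|c_j(t)\|_{\ell^r}\le1.
\]

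\emph{Step 2 (blockwise decay).} For $j\ge0$ I would prove
\[
\|f_j(t)\|_{L^p}\le e^{-c\ep2^{2j}t}\|f_j(0)\|_{L^p}+\int_0^te^{-c\ep2^{2j}(t-\tau)}\bigl(\|\Delta_jh\|_{L^p}+\|R_j\|_{L^p}+\|\diver a\|_{L^\infty}\|f_j\|_{L^p}\bigr)\,d\tau.
\]
For $1<p<\infty$ this follows by multiplying by $|f_j|^{p-2}f_j$ and using the spectrally localized lower bound $-\int\Delta f_j\,|f_j|^{p-2}f_j\ge c2^{2j}\|f_j\|_{L^p}^p$ (Danchin, Planchon). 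The transport term contributes only the $\diver a$ term, as in Lemma~\ref{lem:scalar-estimate}. For the low block $j=-1$ there is no decay. There one only uses Lemma~\ref{lem:scalar-estimate}, and the factors $(1+\ep T)^{1/\rho}$ come from $\ep^{1/\rho}\|f_{-1}\|_{L_T^\rho(L^p)}\le(\ep T)^{1/\rho}\|f_{-1}\|_{L_T^\infty(L^p)}$.

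\emph{Step 3 (time integration and summation).} Take the $L^\rho(0,T)$ norm and apply Young's inequality in time. This uses $\|e^{-c\ep2^{2j}\cdot}\|_{L^\rho}\lesssim(\ep2^{2j})^{-1/\rho}$, and for the source term the exponent $1+1/\rho-1/\rho_1$. Doing so, the heat kernel gains $2^{2j/\rho}\ep^{1/\rho}$ on the data and $2^{2j(1/\rho-1+1/\rho_1)}\ep^{1/\rho+1/\rho_1-1}$ on $h$. Multiply by $2^{js}$ and take the $\ell^r$ norm to obtain the Chemin--Lerner bound. The remaining $R_j$ and $\diver a$ terms are controlled first with $\rho=\infty$, by the Gronwall argument of Lemma~\ref{lem:transport-estimate}, which yields $e^{CV_{p_1}(T)}$. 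That bound is then reinserted into the $\rho<\infty$ estimate, producing the exponent $C(1+\ep T)^{1/\rho}V_{p_1}(T)$.

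\emph{Main obstacle.} The hard step is the decay estimate of Step~2 at the endpoints $p=1$ and $p=\infty$, since the $L^p$ Bernstein-type dissipation lemma fails there. Here I would follow \cite{BCD2011} and work in Lagrangian coordinates. Split $[0,T]$ into subintervals on which $\int\|\nabla a\|_{L^\infty}$ is small, so that the flow $\psi$ is close to the identity. Then $f_j\circ\psi$ solves a heat equation with a perturbed Laplacian, whose error is absorbed because the perturbation is small. Finally, Duhamel with the explicit heat kernel gives the factor $e^{-c\ep2^{2j}t}$ on each subinterval, and the pieces are chained together.
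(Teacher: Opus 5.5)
Quoting Theorem~3.38 of \cite{BCD2011} is exactly what the paper does, so as a citation your proposal matches it. As a self-contained reproof, however, your sketch only works for $1<p<\infty$. There, the $L^p$ energy method with the Bernstein-type lower bound $-\int\Delta f_j\,|f_j|^{p-2}f_j\,dx\ge c_p2^{2j}\|f_j\|_{L^p}^p$ gives Step~2, and your Steps~1 and~3 close the argument. In fact they give the exponent $e^{CV_{p_1}(T)}$, which is stronger than the one stated. One caveat: $c_p$ depends on $p$, and in the standard proofs it degenerates at the endpoints. So this route does not recover a constant independent of $p$, as the statement claims.

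The genuine gap is at $p\in\{1,\infty\}$. This matters for the paper, because $p=\infty$, $s=1$, $r=1$, $\rho=\rho_1=1$ is exactly the case used in the proof of Theorem~\ref{thm:c5-endpoint-vv}. Your Lagrangian paragraph does not produce the decay inequality of Step~2, for two concrete reasons.

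\emph{Second derivatives of the flow.} If $\psi$ is the flow of $a$, then $\Delta(f_j\circ\psi)-(\Delta f_j)\circ\psi$ contains $(\nabla f_j)\circ\psi\cdot\Delta\psi$. But $D^2\psi$ is controlled only through $\int\|\nabla^2a\|_{L^\infty}$, which $V_{p_1}$ does not bound; smallness of $\int\|\nabla a\|_{L^\infty}$ gives only $D\psi\approx I$. You must instead flow along a block-dependent truncation such as $S_{j-N}a$, so that $\|D^2\psi\|_{L^\infty}\lesssim\eta2^{j-N}$. The remainder $(a-S_{j-N}a)\cdot\nabla f_j$, of size $C2^N\|\nabla a\|_{L^\infty}\|f_j\|$, then goes into the source.

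\emph{Loss of spectral localization.} Neither $f_j\circ\psi$ nor the error term is spectrally localized. The heat decay $\|e^{t\Delta}u\|_{L^p}\le Ce^{-ct2^{2j}}\|u\|_{L^p}$ holds only when $\operatorname{supp}\widehat u\subset2^j\mathcal C$, so it applies only to the datum at the start of each subinterval. The Duhamel contribution of the error, of size $\ep\eta2^{2j}\int\|f_j\|\,d\tau$, is not damped. Gronwall then yields a bound growing like $e^{C\eta\ep2^{2j}(t-t_k)}$ rather than decaying. Since your subintervals are chosen only by smallness of $\int\|\nabla a\|_{L^\infty}$, the quantity $\ep2^{2j}(t_{k+1}-t_k)$ is unbounded in $j$. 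Thus "the error is absorbed because the perturbation is small" fails pointwise in time.

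To close the argument you would need two further ingredients:
\begin{enumerate}[(i)]
\item a quantitative almost-localization estimate for $\Delta_{j'}(f_j\circ\psi)$, which requires the truncated flow;
\item a separate treatment of the leaked low frequencies, which decay only at the slower rates $\ep2^{2j'}$. For example, you could return to Eulerian variables and re-apply $\Delta_j$, or absorb the error in the time-integrated smoothing norm rather than in a pointwise decay bound.
\end{enumerate}
As written, the endpoint estimate is asserted rather than proved.
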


\begin{lem}[\textit{Comparison principle, \cite[Chapter~III, Theorem~10]{ProtterWeinberger1984}}]
\label{lem:comparison}
Let $T>0$ and $\ep>0$. Assume that $a\in C^{0,1}$ in space
and that $a$ and $b$ are bounded on $[0,T]\times\R^d$.
Let $r,Q$ be continuous on $[0,T]\times\R^d$ and classical
for $t>0$, satisfying
\[
\begin{aligned}
 \partial_t r-\ep\Delta r+a\cdot\nabla r-br&=h,\\
 \partial_t Q-\ep\Delta Q+a\cdot\nabla Q-bQ&\le h.
\end{aligned}
\]
Assume that $r$ is bounded and that $Q(t,x)\to-\infty$
as $|x|\to\infty$, uniformly for $t\in[0,T]$.
If $Q(0,x)\le r(0,x)$ for all $x\in\R^d$, then
\[
 Q(t,x)\le r(t,x)
 \qquad\text{on }[0,T]\times\R^d.
\]
If the differential and initial inequalities for $Q$ are reversed
and $Q(t,x)\to+\infty$ uniformly in time as $|x|\to\infty$,
then $Q\ge r$.
\end{lem}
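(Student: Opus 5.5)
We would argue by a maximum principle for $w=Q-r$. The key point is that the hypothesis $Q\to-\infty$ at spatial infinity lets us localize any positive maximum to a compact set, so no growth condition beyond boundedness of $r$ is needed. We first treat the first assertion; the reversed one follows by applying it to $-Q$ and $-r$, with $h$ replaced by $-h$.

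\textbf{Step 1: reduction.} Set $w=Q-r$. Subtracting the two relations gives
\[
\partial_t w-\ep\Delta w+a\cdot\nabla w-bw\le0 \quad\text{for } t>0,
\]
together with $w(0,\cdot)\le0$. Since $r$ is bounded and $Q\to-\infty$ uniformly in $t$, also $w(t,x)\to-\infty$ as $|x|\to\infty$, uniformly on $[0,T]$.

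To remove the zeroth order term, choose $\lambda>\sup_{[0,T]\times\R^d}b$, which is finite because $b$ is bounded, and put $v=e^{-\lambda t}w$. Then
\[
\partial_t v-\ep\Delta v+a\cdot\nabla v+(\lambda-b)v\le0,
\]
with $\lambda-b>0$. Moreover $v(0)\le0$, and $v\to-\infty$ at spatial infinity uniformly in $t$.

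\textbf{Step 2: localization and contradiction.} Suppose $\sup_{[0,T]\times\R^d}v>0$. By the uniform decay to $-\infty$, there is $R>0$ with $v<0$ on $[0,T]\times\{|x|\ge R\}$. Hence the supremum equals the maximum of the continuous function $v$ over the compact set $[0,T]\times\overline{B_R}$. That maximum is attained at some $(t_0,x_0)$ with $|x_0|<R$, and $t_0>0$ because $v(0,\cdot)\le0$.

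Since $v$ is classical for $t>0$, at this point we have $\nabla v=0$, $\Delta v\le0$, and $\partial_t v\ge0$. The last inequality is a one-sided time derivative when $t_0=T$, which is still legitimate since the maximum is over $t\le T$. Substituting into the inequality of Step 1 gives
\[
0\ge \partial_t v-\ep\Delta v+a\cdot\nabla v+(\lambda-b)v\ge(\lambda-b)(t_0,x_0)\,v(t_0,x_0)>0,
\]
a contradiction. Hence $v\le0$, that is, $Q\le r$.

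\textbf{Main obstacle.} The delicate point is the attainment of the maximum on an unbounded domain. This is handled entirely by the uniform divergence of $Q$ at infinity combined with the boundedness of $r$. Note that the Lipschitz regularity of $a$ is not used in the argument; only $\ep>0$, the boundedness of $b$ (to choose $\lambda$), and the classical regularity for $t>0$ enter.
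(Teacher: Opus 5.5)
Your proof is correct. It differs from the paper only in that the paper gives no argument here and cites Protter--Weinberger.

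The cited comparison theorem is a Phragm\'en--Lindel\"of-type maximum principle on unbounded domains. It allows $Q-r$ to grow at infinity (up to Gaussian rate) and permits coefficients with controlled growth. Its proof compares with an auxiliary barrier that grows like a Gaussian in $|x|$ and applies the maximum principle on large bounded cylinders.

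You exploit the much stronger hypotheses actually stated in this lemma. Boundedness of $r$ together with $Q\to-\infty$ uniformly in $t$ forces $Q-r\to-\infty$. So after the shift $v=e^{-\lambda t}(Q-r)$ with $\lambda>\sup b$, a positive supremum is a maximum attained at a point with $|x_0|<R$ and $t_0>0$. The first- and second-derivative tests then give the contradiction directly, with no barrier.

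What your route buys is a short, self-contained proof that uses only $\sup b<\infty$. You are right that the Lipschitz bound on $a$ is not needed; boundedness of $a$ is not needed either, since $\nabla v=0$ at the maximum. It is also exactly adapted to the application in Proposition~\ref{prop:c5-lower-fixed}, where $r$ is periodic and $Q$ contains the term $-Ce^{(3+3C)t}(x^2+\ep)$. The cited theorem offers generality that is not needed there.

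One small point of hygiene: if ``classical for $t>0$'' is meant only on $(0,T)\times\R^d$, do not rely on a one-sided time derivative at $t=T$. Instead, run your argument on $[0,T']$ for each $T'<T$, then let $T'\uparrow T$ using the continuity of $Q$ and $r$.
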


\begin{lem}[\textit{Dini criterion, \cite[Lemma~30(i)]{KochSickel2002}}]
\label{lem:dini}
Let $f\in C_b^1(\R^d)$ and define
\[
 \omega(\nabla f,h)
 =\sup_{|x-y|\le h}|\nabla f(x)-\nabla f(y)|.
\]
If
\[
 \int_0^1\frac{\omega(\nabla f,h)}h\,dh<\infty,
\]
then $f\in B_{\infty,1}^{1}$.
The assertion includes bounded periodic functions.
\end{lem}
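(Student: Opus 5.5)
We argue directly on the dyadic blocks. The idea is to write each high-frequency block $\Delta_j f$ as $2^{-j}$ times a mean-zero convolution of $\nabla f$, so that $2^j\|\Delta_jf\|_{L^\infty}$ is controlled by the modulus of continuity $\omega(\nabla f,\cdot)$ at scale $2^{-j}$. Summing in $j$ then turns the Dini integral into the $B_{\infty,1}^{1}$ norm.

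\emph{Low block.} It is trivial: $\|\Delta_{-1}f\|_{L^\infty}\le C\|f\|_{L^\infty}$, since the kernel of $\chi(D)$ is in $L^1$.

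\emph{Writing the high blocks in terms of $\nabla f$.} For $j\ge0$, the symbol $\varphi(2^{-j}\xi)$ is supported in an annulus. So we may write
\[
\varphi(\xi)=\sum_{k=1}^d i\xi_k\,\psi_k(\xi),\qquad \psi_k(\xi)=\frac{-i\xi_k}{|\xi|^2}\varphi(\xi)\in C_c^\infty(\R^d\setminus\{0\}).
\]
Rescaling gives
\[
\Delta_jf=2^{-j}\sum_{k=1}^d K_{j,k}*\partial_kf,\qquad K_{j,k}(y)=2^{jd}K_k(2^jy),\quad K_k=\mathcal F^{-1}\psi_k\in\mathcal S(\R^d).
\]
Because $\psi_k(0)=0$, each kernel has zero mean, $\int K_k=0$. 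Hence
\[
K_{j,k}*\partial_kf(x)=\int K_{j,k}(y)\bigl(\partial_kf(x-y)-\partial_kf(x)\bigr)\,dy .
\]
After the change of variables $y=2^{-j}z$ this yields
\[
2^j\|\Delta_jf\|_{L^\infty}\le C\int_{\R^d}|K(z)|\,\omega(\nabla f,2^{-j}|z|)\,dz,\qquad |K|:=\sum_k|K_k|.
\]

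\emph{Summation over $j$ (the main step).} We use only two properties of $\omega(h)=\omega(\nabla f,h)$: it is nondecreasing, and $\omega(h)\le2\|\nabla f\|_{L^\infty}$. Monotonicity gives, for each $r>0$,
\[
\omega(2^{-j}r)\le\frac1{\ln2}\int_{2^{-j}r}^{2^{-j+1}r}\frac{\omega(h)}h\,dh .
\]
Summing over $j\ge0$ therefore gives
\[
\sum_{j\ge0}\omega(2^{-j}r)\le C\int_0^{2r}\frac{\omega(h)}h\,dh\le C\int_0^1\frac{\omega(h)}h\,dh+C\|\nabla f\|_{L^\infty}\log_+(2r).
\]
We apply this with $r=|z|$ and integrate against $|K(z)|$, exchanging sum and integral by Tonelli. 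The result is
\[
\sum_{j\ge0}2^j\|\Delta_jf\|_{L^\infty}\le C\Bigl(\int_0^1\frac{\omega(\nabla f,h)}h\,dh+\|\nabla f\|_{L^\infty}\Bigr)\int|K(z)|\bigl(1+\log(2+|z|)\bigr)dz .
\]
The last integral is finite because $K$ is a Schwartz function. This is the only point needing care: the large-$|z|$ tail of the kernel, where $2^{-j}|z|$ may exceed $1$, must be absorbed using the sup bound on $\omega$ together with the logarithmic weight. Adding the low block gives $\|f\|_{B_{\infty,1}^{1}}<\infty$.

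\emph{Periodic case.} The argument uses only $L^\infty$ bounds, convolution with $L^1$ kernels and the modulus of continuity; no decay of $f$ is needed. It therefore applies verbatim to bounded periodic $f\in C^1$, viewed as tempered distributions on $\R^d$.
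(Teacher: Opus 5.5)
Your proof is correct, and it is more self-contained than the paper's. The paper quotes Koch--Sickel to get $\partial_k f\in B^0_{\infty,1}$ for each $k$. It then lifts this to $f\in B^1_{\infty,1}$ by Bernstein's inequality on the high blocks, $\|\Delta_j f\|_{L^\infty}\le C2^{-j}\|\Delta_j\nabla f\|_{L^\infty}$ for $j\ge0$, together with $\|\Delta_{-1}f\|_{L^\infty}\le C\|f\|_{L^\infty}$.

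You merge these two steps. The factorization $\varphi(\xi)=\sum_k i\xi_k\psi_k(\xi)$ supplies both the factor $2^{-j}$ (the Bernstein step made explicit) and a mean-zero kernel acting on $\nabla f$. The dyadic comparison $\omega(2^{-j}r)\le(\ln 2)^{-1}\int_{2^{-j}r}^{2^{-j+1}r}\omega(h)h^{-1}\,dh$ then reproves, by hand, the Dini criterion for $B^0_{\infty,1}$ that the paper outsources. You correctly identified the large-$|z|$ tail of the kernel as the one delicate point, and you handle it properly using $\omega\le 2\|\nabla f\|_{L^\infty}$ and the weight $\log(2+|z|)$.

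The paper's route is shorter but depends on the external result. Yours is elementary and yields the quantitative bound
\[
\|f\|_{B^1_{\infty,1}}\le C\Bigl(\|f\|_{L^\infty}+\|\nabla f\|_{L^\infty}+\int_0^1\frac{\omega(\nabla f,h)}{h}\,dh\Bigr).
\]
It also makes transparent why bounded periodic $C^1$ functions, such as the datum in \eqref{eq:c5-lower-datum}, are covered: they already lie in $C_b^1(\R^d)$, and no decay is used anywhere.
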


\begin{proof}
By the cited result, each component of $\nabla f$ belongs to
$B_{\infty,1}^{0}$. Bernstein's inequalities yield
\[
 \|f\|_{B_{\infty,1}^{1}}
 \le C\left(
 \|f\|_{L^\infty}
 +\sum_{k=1}^{d}\|\partial_k f\|_{B_{\infty,1}^{0}}
 \right)<\infty.
\]
\end{proof}

\section[Inviscid limit for finite p]{Inviscid limit for $1\le p<\infty$}
\label{sec:finite-p-vv}

We first establish a common existence time and uniform estimates for
$1\le p\le\infty$. These estimates will also be used in
Section~\ref{sec:endpoint-vanishing-viscosity}. We then prove the
convergence rates for finite $p$.

\begin{lem}\label{lem:uniform-local}
Let $d\ge1$, $1\le p\le\infty$, and
$u_0\in B_{p,1}^{1+d/p}(\R^d)$. Set
\[
 R=1+\|u_0\|_{B_{p,1}^{1+d/p}}.
\]
There exists a constant $C=C(d,p)\ge1$ such that, with
\[
 T_*=\frac{1}{16C(R+R^2)},
\]
the solution $u^\ep$ of \eqref{eq:c5-unified} exists uniquely in
$C([0,T_*];B_{p,1}^{1+d/p})$ and, for all $\ep\in[0,1]$
and $t\in[0,T_*]$,
\begin{equation}\label{eq:original-uniform-local}
 \|u^\ep(t)\|_{B_{p,1}^{1+d/p}}\le2R.
\end{equation}
For each fixed $\ep\in[0,1]$, boundedness of the critical norm
implies continuation.
\end{lem}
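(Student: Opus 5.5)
The plan is to derive an a priori estimate in $B_{p,1}^{1+d/p}$ that is uniform in $\ep\in[0,1]$, close it by a continuity argument on $[0,T_*]$, and then settle existence, uniqueness and continuation by standard arguments. We write \eqref{eq:c5-unified} as the transport--diffusion equation \eqref{transport} with
\[
a=(1-2u)\nabla(I-\Delta)^{-1}u,\qquad h=-u(1-u)\bigl((I-\Delta)^{-1}u-u\bigr),
\]
and set $s=1+d/p$. For $p<\infty$ we use Lemma~\ref{lem:transport-estimate}, or more precisely its proof via localization by $\Delta_j$. The commutator is bounded by the first estimate of Lemma~\ref{lem:bcd-commutator} in the case $\sigma=s$, $r=1$, which is covered by the second line of $V_{p_1}$. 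The diffusion term only helps, because Lemma~\ref{lem:scalar-estimate} is uniform in $\ep\ge0$. This gives
\[
\|u(t)\|_{B_{p,1}^{s}}\le\|u_0\|_{B_{p,1}^{s}}+\int_0^t\|h\|_{B_{p,1}^{s}}\,d\tau+C\int_0^t\|\nabla a\|_{B_{p,1}^{d/p}}\|u\|_{B_{p,1}^{s}}\,d\tau .
\]
For $p=\infty$ the same inequality is exactly \eqref{eq:c5-critical-transport-tools}.

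Next we bound $a$ and $h$ using the algebra property of $B_{p,1}^{s}\hookrightarrow L^\infty$ from Corollary~\ref{lem:bony} together with Lemma~\ref{lem:elliptic}. Since $\nabla(I-\Delta)^{-1}u\in B_{p,1}^{s+1}$, we have $\|\nabla a\|_{B_{p,1}^{d/p}}\le\|a\|_{B_{p,1}^{s}}\le C(1+\|u\|_{B_{p,1}^{s}})\|u\|_{B_{p,1}^{s}}$. Similarly $\|h\|_{B_{p,1}^{s}}\le C(\|u\|+\|u\|^2)\|u\|$ in $B_{p,1}^{s}$ norms. Combining these,
\[
\|u(t)\|_{B_{p,1}^{s}}\le\|u_0\|_{B_{p,1}^{s}}+C\int_0^t\bigl(\|u\|_{B_{p,1}^{s}}^2+\|u\|_{B_{p,1}^{s}}^3\bigr)\,d\tau .
\]
We run a bootstrap. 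As long as $\|u\|_{B_{p,1}^{s}}\le 2R$, the integral is at most $t\,C(4R^2+8R^3)\le 8C t R(R+R^2)$. For $t\le T_*=1/(16C(R+R^2))$ this is at most $R/2$, so $\|u\|_{B_{p,1}^{s}}\le R-1+R/2<2R$ and the bound is strict. This constant matching is the reason for the specific form of $T_*$, after enlarging $C$ if needed. We make the argument rigorous by working with smooth approximate solutions.

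For existence we use a Friedrichs or iterative scheme $u^{n+1}$ solving the linear problem with coefficients built from $u^n$ and data $S_nu_0$. The same computation gives uniform bounds $\|u^n\|\le 2R$ on $[0,T_*]$, for every $\ep\in[0,1]$. We then prove convergence in a lower norm, $B_{p,\infty}^{s-1}$ or $L^\infty_TL^p$-type, via Lemma~\ref{lem:transport-estimate} for the differences. Interpolation together with Fatou gives a limit bounded in $B_{p,1}^{s}$. Time continuity in $B_{p,1}^{s}$ follows from the $\ell^1$ summability and the uniform tail control of the dyadic blocks.

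\textbf{Main obstacle: uniqueness and convergence at the critical index.} The difference $w=u_1-u_2$ satisfies a transport equation whose source contains $(a_1-a_2)\cdot\nabla u_2$. Since $\nabla u_2$ lies only in $B_{p,1}^{d/p}$, we would estimate $w$ in $B_{p,\infty}^{d/p}$, or in $B_{p,\infty}^{0}$ when $p$ is small. At the endpoint $p=\infty$, i.e.\ $B_{\infty,\infty}^{0}$, this estimate may lose a logarithm. In that case we fall back on an $L^\infty$ estimate for $w$ from Lemma~\ref{lem:scalar-estimate}. This works because $(I-\Delta)^{-1}$ gains two derivatives, so $a_1-a_2$ is controlled in $W^{1,\infty}$ by $\|w\|_{L^\infty}$, and Gronwall then closes. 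The $L^\infty$ route is what we expect to work uniformly in $\ep$ and for all $p$. Finally, continuation holds because the existence time depends only on the critical norm.
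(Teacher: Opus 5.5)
Your proposal is correct and follows the paper's proof. You use the same $\Delta_j$-localization with the $\ep$-uniform bound of Lemma~\ref{lem:scalar-estimate}, the same product and elliptic estimates leading to $\|u(t)\|\le\|u_0\|+C\int_0^t(\|u\|^2+\|u\|^3)\,d\tau$ in the critical norm, and the identical bootstrap arithmetic for $T_*$; the only difference is that you sketch the existence, uniqueness and continuation part, which the paper simply quotes from the known local well-posedness results.

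Two small slips need fixing. First, at $\sigma=1+d/p$ the first estimate of Lemma~\ref{lem:bcd-commutator} as stated does not apply, since it requires $\sigma<1+d/p$; use the tame bound \eqref{eq:bcd-commutator-tame}, as the paper does. Second, $a_1-a_2$ is controlled by $\|w\|_{L^\infty}$ only in $L^\infty$, not in $W^{1,\infty}$, because its gradient involves $\nabla w$ and $\nabla^2(I-\Delta)^{-1}w$; the $L^\infty$ control is nonetheless all your uniqueness estimate needs.
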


\begin{proof}
For $\ep=0$, local well-posedness is established in
\cite{ZhangMuZhou2022} when $p<\infty$ and in
\cite{DingYu2026} when $p=\infty$.
For each fixed $\ep>0$, local well-posedness in
$B_{p,1}^{1+d/p}$ follows by adapting the local constructions
in \cite{ZhangMuZhou2022,DingYu2026}, using the
transport--diffusion estimates in place of the transport estimates.
For each fixed $\ep\in[0,1]$, the local existence time depends
only on $\ep$ and an upper bound for the initial critical norm.
Thus boundedness of the critical norm implies continuation.

By Corollary~\ref{lem:bony} and Lemma~\ref{lem:elliptic},
\[
\begin{aligned}
 \|(1-2u^\ep)\nabla(I-\Delta)^{-1}u^\ep\|_{B_{p,1}^{1+d/p}}
 &\le C\left(
 \|u^\ep\|_{B_{p,1}^{1+d/p}}
 +\|u^\ep\|_{B_{p,1}^{1+d/p}}^2\right),\\
 \|u^\ep(1-u^\ep)
 \bigl((I-\Delta)^{-1}u^\ep-u^\ep\bigr)\|_{B_{p,1}^{1+d/p}}
 &\le C\left(
 \|u^\ep\|_{B_{p,1}^{1+d/p}}^2
 +\|u^\ep\|_{B_{p,1}^{1+d/p}}^3\right).
\end{aligned}
\]
Applying $\Delta_j$ to \eqref{eq:c5-unified}, using
Lemma~\ref{lem:scalar-estimate} and
\eqref{eq:bcd-commutator-tame}, and summing with weights
$2^{j(1+d/p)}$, we obtain, by the embedding
$B_{p,1}^{1+d/p}\hookrightarrow W^{1,\infty}$,
\[
\begin{aligned}
 \|u^\ep(t)\|_{B_{p,1}^{1+d/p}}
 \le \|u^\ep(0)\|_{B_{p,1}^{1+d/p}}+C\int_0^t
 \|u^\ep(\tau)\|_{B_{p,1}^{1+d/p}}^2
 \left(1+\|u^\ep(\tau)\|_{B_{p,1}^{1+d/p}}\right)\,d\tau.
\end{aligned}
\]
The diffusion term is dissipative, and $C=C(d,p)\ge1$
is independent of $\ep$.
The estimate holds for critical solutions by regularization
and lower semicontinuity.

Since
\[
 \|u^\ep(0)\|_{B_{p,1}^{1+d/p}}
 =\|u_0\|_{B_{p,1}^{1+d/p}}=R-1,
\]
whenever the bound
\[
\|u^\ep(t)\|_{B_{p,1}^{1+d/p}}\le2R
\]
holds on an initial interval of existence contained in $[0,T_*]$,
we have
\[
\begin{aligned}
 \|u^\ep(t)\|_{B_{p,1}^{1+d/p}}
 &\le R-1+Ct(4R^2+8R^3)\\
 &\le R-1+\frac{4R^2+8R^3}{16(R+R^2)}\\
 &\le2R.
\end{aligned}
\]
Continuity closes the bootstrap. If a maximal existence time
were at or before $T_*$, the continuation criterion would extend
the solution, a contradiction. Hence all solutions exist on
$[0,T_*]$ and satisfy \eqref{eq:original-uniform-local}.
\end{proof}

We shall use the following product estimates to control the
nonlinear terms in \eqref{eq:error-equation}.

\begin{lem}\label{lem:critical-multiplier}
If $1<p<\infty$, $f\in B_{p,1}^{0}$, and $g\in B_{p,1}^{d/p}$, then
\begin{equation}\label{eq:critical-multiplier}
 \|fg\|_{B_{p,1}^{0}}
 \le C_{d,p}\|f\|_{B_{p,1}^{0}}\|g\|_{B_{p,1}^{d/p}}.
\end{equation}
If $p=1$ and $0<\beta<d$, then instead
\begin{equation}\label{eq:p1-positive-multiplier}
 \|fg\|_{B_{1,1}^{\beta}}
 \le C_{d,\beta}\|f\|_{B_{1,1}^{\beta}}\|g\|_{B_{1,1}^{d}}.
\end{equation}
\end{lem}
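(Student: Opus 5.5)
We split $fg=T_fg+T_gf+R(f,g)$ with Bony's decomposition and estimate the three pieces separately, using only Bernstein's inequalities (Lemma~\ref{lem:bernstein}) and the almost-orthogonality of dyadic supports.

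\emph{Case $1<p<\infty$.}
\begin{itemize}
\item \emph{The term $T_gf$.} Since $B_{p,1}^{d/p}\hookrightarrow L^\infty$, we have $\|S_{k-1}g\|_{L^\infty}\le C\|g\|_{B_{p,1}^{d/p}}$. The block $S_{k-1}g\,\Delta_kf$ is spectrally supported in an annulus of size $2^k$, so
\[
\|\Delta_j T_gf\|_{L^p}\le C\sum_{|k-j|\le4}\|g\|_{B_{p,1}^{d/p}}\|\Delta_kf\|_{L^p}.
\]
Summing in $j$ gives $\|T_gf\|_{B^0_{p,1}}\le C\|g\|_{B_{p,1}^{d/p}}\|f\|_{B^0_{p,1}}$.
\item \emph{The term $T_fg$.} Here I use Bernstein with exponents $p\to\infty$ on low frequencies:
\[
\|S_{k-1}f\|_{L^\infty}\le C\sum_{m\le k-2}2^{md/p}\|\Delta_mf\|_{L^p}.
\]
This gives
\[
\sum_k\|S_{k-1}f\|_{L^\infty}\|\Delta_kg\|_{L^p}\le C\sum_m\|\Delta_mf\|_{L^p}\sum_{k\ge m+2}2^{md/p}\|\Delta_kg\|_{L^p}.
\]
Writing $2^{md/p}\|\Delta_kg\|_{L^p}=2^{(m-k)d/p}\,2^{kd/p}\|\Delta_kg\|_{L^p}$ and bounding the inner sum by $\|g\|_{B^{d/p}_{p,1}}$ gives $\|T_fg\|_{B^0_{p,1}}\le C\|f\|_{B^0_{p,1}}\|g\|_{B^{d/p}_{p,1}}$.
\item \emph{The remainder $R(f,g)$.} This is the delicate piece, because the total regularity index is $0+d/p$ with no room to spare, and this is where $p>1$ is used. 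Each product $\Delta_kf\,\tilde\Delta_kg$ lies in $L^{p/2}$ if $p\ge2$, and in $L^1$ generally, with $\|\Delta_kf\tilde\Delta_kg\|_{L^1}\le\|\Delta_kf\|_{L^p}\|\tilde\Delta_kg\|_{L^{p'}}$. I plan to use Hölder into $L^r$ with $1/r=2/p$ when $p\ge2$, or into $L^1$ when $p<2$, and then Bernstein from $L^r$ back to $L^p$ for $\Delta_j$ applied to a function supported in a ball of size $2^k$. This gives
\[
\|\Delta_jR\|_{L^p}\le C\,2^{jd(1/r-1/p)}\sum_{k\ge j-2}\|\Delta_kf\|_{L^p}\|\tilde\Delta_kg\|_{L^{\tilde p}},
\]
where $\tilde p=p$ if $p\ge2$ and $\tilde p=p'$ otherwise. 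When $p\ge2$ we have $d(1/r-1/p)=d/p$, so
\[
\sum_j 2^{jd/p}\sum_{k\ge j-2}\dots\le C\sum_k 2^{kd/p}\|\Delta_kf\|_{L^p}\|\tilde\Delta_kg\|_{L^p}\le C\|f\|_{B^0_{p,1}}\|g\|_{B^{d/p}_{p,1}},
\]
using $\sup_k 2^{kd/p}\|\Delta_kg\|_{L^p}\le\|g\|_{B^{d/p}_{p,1}}$. The geometric sum over $j\le k+2$ converges since $d/p>0$. When $1<p<2$, the Bernstein gain is $d(1-1/p)=d/p'>0$, and $\|\tilde\Delta_kg\|_{L^{p'}}\le C2^{kd(1/p-1/p')}\|\tilde\Delta_kg\|_{L^p}$. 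The powers combine to $2^{kd/p'}2^{kd(1/p-1/p')}=2^{kd/p}$, and the same summation works. The requirement $d/p'>0$ (respectively $d/p>0$) for the geometric sum is exactly why $p=1$ fails at index $0$.
\end{itemize}

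\emph{Case $p=1$, $0<\beta<d$.}
\begin{itemize}
\item \emph{Paraproducts.} For $T_gf$, use $\|S_{k-1}g\|_{L^\infty}\le C\|g\|_{B^d_{1,1}}$. For $T_fg$, use
\[
\|S_{k-1}f\|_{L^\infty}\le C\sum_{m\le k-2}2^{m(d-\beta)}\,2^{m\beta}\|\Delta_mf\|_{L^1}\le C\,2^{k(d-\beta)}\|f\|_{B^\beta_{1,1}},
\]
valid since $d-\beta>0$. Then $2^{k\beta}\|S_{k-1}f\|_{L^\infty}\|\Delta_kg\|_{L^1}\le C\|f\|_{B^\beta_{1,1}}\,2^{kd}\|\Delta_kg\|_{L^1}$, which sums to the claimed bound.
\item \emph{Remainder.} Estimate $\|\Delta_kf\,\tilde\Delta_kg\|_{L^1}\le\|\Delta_kf\|_{L^1}\|\tilde\Delta_kg\|_{L^\infty}\le\|\Delta_kf\|_{L^1}\,C2^{kd}\|\tilde\Delta_kg\|_{L^1}$. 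Summing $2^{j\beta}$ over $j\le k+2$ converges because $\beta>0$, and gives $C\|f\|_{B^\beta_{1,1}}\|g\|_{B^d_{1,1}}$.
\end{itemize}

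The main obstacle is the remainder at regularity $0$ for $1<p<\infty$. I expect the Hölder/Bernstein exponent bookkeeping above, in particular the split at $p=2$, to resolve it, since the positivity of $d\min(1/p,1/p')$ supplies the needed geometric decay.
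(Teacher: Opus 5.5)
Your proposal is correct and is essentially the paper's proof. You use the same Bony decomposition, you handle the paraproducts through $B_{p,1}^{d/p}\hookrightarrow L^\infty$ and the low-frequency Bernstein bound (the dyadic form of $B_{p,1}^{0}\hookrightarrow B_{\infty,\infty}^{-d/p}$, respectively $B_{1,1}^{\beta}\hookrightarrow B_{\infty,\infty}^{\beta-d}$), and you split the remainder at $p=2$ via H\"older into $L^{p/2}$ or $L^{1}$ plus Bernstein, which is exactly the paper's route through $B_{p/2,1}^{d/p}\hookrightarrow B_{p,1}^{0}$ and $B_{1,1}^{d/p'}\hookrightarrow B_{p,1}^{0}$. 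The only difference is that you verify by hand, block by block, the paraproduct and remainder continuity estimates that the paper quotes from Bahouri--Chemin--Danchin; your restriction $k\ge j-2$ in the remainder should read $k\ge j-3$ with the paper's cutoffs, which is immaterial.
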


\begin{proof}
We use Bony's decomposition
\[
fg=T_gf+T_fg+R(f,g)
\]
and the continuity estimates in
\cite[Theorems~2.82 and~2.85]{BCD2011}.

First, let $1<p<\infty$. The embeddings
\[
B_{p,1}^{d/p}\hookrightarrow L^\infty,
\qquad
B_{p,1}^{0}\hookrightarrow B_{\infty,\infty}^{-d/p}
\]
and the paraproduct estimates give
\[
\begin{aligned}
\|T_gf\|_{B_{p,1}^{0}}
&\le C\|g\|_{L^\infty}\|f\|_{B_{p,1}^{0}}
\le C\|f\|_{B_{p,1}^{0}}\|g\|_{B_{p,1}^{d/p}},\\
\|T_fg\|_{B_{p,1}^{0}}
&\le C\|f\|_{B_{\infty,\infty}^{-d/p}}
       \|g\|_{B_{p,1}^{d/p}}
\le C\|f\|_{B_{p,1}^{0}}\|g\|_{B_{p,1}^{d/p}}.
\end{aligned}
\]

For the remainder, we distinguish two cases.
If $p\ge2$, then $d/p>0$, and the remainder estimate,
together with
$B_{p/2,1}^{d/p}\hookrightarrow B_{p,1}^{0}$, yields
\[
\begin{aligned}
\|R(f,g)\|_{B_{p,1}^{0}}
&\le C\|R(f,g)\|_{B_{p/2,1}^{d/p}}\\
&\le C\|f\|_{B_{p,1}^{0}}
       \|g\|_{B_{p,\infty}^{d/p}}\\
&\le C\|f\|_{B_{p,1}^{0}}
       \|g\|_{B_{p,1}^{d/p}}.
\end{aligned}
\]
If $1<p<2$, then $p<p'$ and
\[
B_{p,1}^{d/p}\hookrightarrow B_{p',\infty}^{d/p'},
\qquad
B_{1,1}^{d/p'}\hookrightarrow B_{p,1}^{0}.
\]
Since $1/p+1/p'=1$ and $d/p'>0$, we obtain
\[
\begin{aligned}
\|R(f,g)\|_{B_{p,1}^{0}}
&\le C\|R(f,g)\|_{B_{1,1}^{d/p'}}\\
&\le C\|f\|_{B_{p,1}^{0}}
       \|g\|_{B_{p',\infty}^{d/p'}}\\
&\le C\|f\|_{B_{p,1}^{0}}
       \|g\|_{B_{p,1}^{d/p}}.
\end{aligned}
\]
Combining these estimates proves \eqref{eq:critical-multiplier}.

Now let $p=1$ and $0<\beta<d$. We use the embeddings
\[
B_{1,1}^{d}\hookrightarrow L^\infty\cap B_{\infty,\infty}^{0},
\qquad
B_{1,1}^{\beta}\hookrightarrow B_{\infty,\infty}^{\beta-d}.
\]
Since $\beta-d<0$, the paraproduct estimates imply
\[
\begin{aligned}
\|T_gf\|_{B_{1,1}^{\beta}}
&\le C\|g\|_{L^\infty}\|f\|_{B_{1,1}^{\beta}}
\le C\|f\|_{B_{1,1}^{\beta}}\|g\|_{B_{1,1}^{d}},\\
\|T_fg\|_{B_{1,1}^{\beta}}
&\le C\|f\|_{B_{\infty,\infty}^{\beta-d}}
       \|g\|_{B_{1,1}^{d}}
\le C\|f\|_{B_{1,1}^{\beta}}\|g\|_{B_{1,1}^{d}}.
\end{aligned}
\]
Finally, $\beta>0$ allows us to apply the remainder estimate:
\[
\begin{aligned}
\|R(f,g)\|_{B_{1,1}^{\beta}}
&\le C\|f\|_{B_{1,1}^{\beta}}
       \|g\|_{B_{\infty,\infty}^{0}}\\
&\le C\|f\|_{B_{1,1}^{\beta}}
       \|g\|_{B_{1,1}^{d}}.
\end{aligned}
\]
Summing the three bounds proves
\eqref{eq:p1-positive-multiplier}.
\end{proof}

\begin{proof}[Proof of Theorem~\ref{thm:visc}]
Let $u^\ep$ and $u^0$ be the solutions of \eqref{eq:PKS} and
\eqref{eq:HKSE} with the same initial data $u_0$. Their chemical signals satisfy
\[
S^\ep=(I-\Delta)^{-1}u^\ep,\qquad S^0=(I-\Delta)^{-1}u^0.
\]
The difference of the transport terms can be written as
\[
\begin{split}
&(1-2u^\ep)\nabla u^\ep\cdot\nabla S^\ep
 -(1-2u^0)\nabla u^0\cdot\nabla S^0\\
=&(1-2u^0)\nabla S^0\cdot\nabla \bigl(u^\ep-u^0\bigr)
 +(1-2u^\ep)\nabla u^\ep\cdot\nabla \bigl(S^\ep-S^0\bigr)
 -2\bigl(u^\ep-u^0\bigr)\nabla S^0\cdot\nabla u^\ep.
\end{split}
\]
For the remaining nonlinear terms, we have
\[
u^\ep(1-u^\ep)\Delta S^\ep-u^0(1-u^0)\Delta S^0
=\bigl(u^\ep-u^0\bigr)(1-u^\ep-u^0)\Delta S^\ep+u^0(1-u^0)\Delta \bigl(S^\ep-S^0\bigr).
\]
Subtracting \eqref{eq:HKSE} from \eqref{eq:PKS}
and using the preceding identities, we obtain
\begin{equation}\label{eq:error-equation}
\begin{cases}
\displaystyle
\partial_t(u^\ep-u^0)
+\bigl((1-2u^0)\nabla S^0\bigr)\cdot\nabla(u^\ep-u^0)
=\sum_{i=1}^5 F_i,\\[2mm]
(u^\ep-u^0)|_{t=0}=0,
\end{cases}
\end{equation}
where
\begin{equation}\label{eq:error-sources}
\begin{alignedat}{2}
F_1&=\ep\Delta u^\ep,\qquad
&F_2&=-(1-2u^\ep)\nabla u^\ep\cdot\nabla\bigl(S^\ep-S^0\bigr),\\[1mm]
F_3&=2\bigl(u^\ep-u^0\bigr)\nabla S^0\cdot\nabla u^\ep,\qquad
&F_4&=-\bigl(u^\ep-u^0\bigr)(1-u^\ep-u^0)\Delta S^\ep,\\[1mm]
F_5&=-u^0(1-u^0)\Delta\bigl(S^\ep-S^0\bigr).
\end{alignedat}
\end{equation}
These identities will also be used in
Section~\ref{sec:endpoint-vanishing-viscosity}.

By \eqref{eq:original-uniform-local} and
Lemma~\ref{lem:bernstein}, Corollary~\ref{lem:bony} and Lemma~\ref{lem:elliptic},
\[
\|S^\ep\|_{L_T^\infty(B_{p,1}^{3+d/p})}
+\|S^0\|_{L_T^\infty(B_{p,1}^{3+d/p})}\le C_M,
\qquad
\|\nabla \bigl((1-2u^0)\nabla S^0\bigr)\|_{L_T^\infty(B_{p,1}^{d/p})}\le C_M.
\]
Indeed, we write
\[
\nabla\bigl((1-2u^0)\nabla S^0\bigr)
=-2\nabla u^0\otimes\nabla S^0
 +(1-2u^0)\nabla^2S^0
\]
and estimate the products on the right-hand side in $B_{p,1}^{d/p}$.
Applying Lemma~\ref{lem:transport-estimate} to
\eqref{eq:error-equation} with $\sigma=0$ yields
\begin{equation}\label{eq:finite-error-bound}
\|u^\ep(t)-u^0(t)\|_{B_{p,1}^{0}}
\le C_{M,T}\int_0^t\sum_{i=1}^5
       \|F_i(\tau)\|_{B_{p,1}^{0}}\,d\tau.
\end{equation}
To estimate $F_1$, we use \eqref{eq:c5-unified} with $u=u^\ep$
\[
\partial_tu^\ep+\bigl((1-2u^\ep)\nabla S^\ep\bigr)\cdot\nabla u^\ep-\ep\Delta u^\ep=-u^\ep(1-u^\ep)\Delta S^\ep.
\]
The same product and elliptic estimates give
\[
\|\nabla \bigl((1-2u^\ep)\nabla S^\ep\bigr)\|_{L_T^\infty(B_{p,1}^{d/p})}
+\|-u^\ep(1-u^\ep)\Delta S^\ep\|_{L_T^\infty(B_{p,1}^{1+d/p})}\le C_M.
\]
Lemma~\ref{lem:transport-diffusion} therefore implies
\[
\ep\|u^\ep\|_{L_T^1(B_{p,1}^{3+d/p})}\le C_{M,T}.
\]

We first estimate $F_1$. If $p\le d$, then $1+d/p\ge2$. By \eqref{eq:original-uniform-local},
\[
\int_0^T\|F_1(t)\|_{B_{p,1}^{0}}\,dt
\le C\ep\|u^\ep\|_{L_T^1(B_{p,1}^{2})}
\le C_{M,T}\ep.
\]
If $p>d$, let $\theta=(1+d/p)/2$. By \eqref{eq:besov-interpolation} and H\"older's inequality, we obtain
\[
\begin{aligned}
\|F_1\|_{L_T^1(B_{p,1}^{0})}
&\le C\ep\|u^\ep\|_{L_T^1(B_{p,1}^{2})}\\
&\le C\ep T^\theta
\|u^\ep\|_{L_T^\infty(B_{p,1}^{1+d/p})}^{\theta}
\|u^\ep\|_{L_T^1(B_{p,1}^{3+d/p})}^{1-\theta}\\
&\le C_{M,T}\ep^\theta
=C_{M,T}\ep^{(1+d/p)/2}.
\end{aligned}
\]
To estimate the nonlinear terms $F_i$, $2\le i\le5$,
we first apply Lemma~\ref{lem:elliptic} to obtain
\[
\|\nabla(S^\ep-S^0)\|_{B_{p,1}^{0}}
+\|\Delta(S^\ep-S^0)\|_{B_{p,1}^{0}}
\le C\|u^\ep-u^0\|_{B_{p,1}^{0}}.
\]
By Corollary~\ref{lem:bony}, Lemma~\ref{lem:critical-multiplier},
\eqref{eq:c5-multiplier-tools}, and \eqref{eq:original-uniform-local},
we obtain
\begin{align*}
\|F_2\|_{B_{p,1}^{0}}
&\le C\|(1-2u^\ep)\nabla u^\ep\|_{B_{p,1}^{d/p}}
       \|\nabla(S^\ep-S^0)\|_{B_{p,1}^{0}}\\
&\le C\left(1+\|u^\ep\|_{B_{p,1}^{d/p}}\right)
       \|\nabla u^\ep\|_{B_{p,1}^{d/p}}
       \|u^\ep-u^0\|_{B_{p,1}^{0}}\\
&\le C\left(1+\|u^\ep\|_{B_{p,1}^{1+d/p}}\right)
       \|u^\ep\|_{B_{p,1}^{1+d/p}}
       \|u^\ep-u^0\|_{B_{p,1}^{0}}\\
&\le C_M\|u^\ep-u^0\|_{B_{p,1}^{0}}.
\end{align*}
The terms $F_3$, $F_4$, and $F_5$ are estimated similarly. 
Consequently,
\[
\sum_{i=2}^{5}\|F_i(t)\|_{B_{p,1}^{0}}
\le C_M\|u^\ep(t)-u^0(t)\|_{B_{p,1}^{0}}.
\]
Inserting these estimates into \eqref{eq:finite-error-bound} and
applying Gronwall's inequality completes the proof.
\end{proof}

\begin{prop}\label{prop:p1-boundary}
Let $u_0\in B_{1,1}^{1+d}$, let $T=T_*$ be given by
Lemma~\ref{lem:uniform-local}, and set
$M=4(1+\|u_0\|_{B_{1,1}^{1+d}})$.
Let $u^\ep$ and $u^0$ solve \eqref{eq:PKS} and \eqref{eq:HKSE} with
the same initial data $u_0$. Then, for $0<\ep\le1$,
\begin{equation}\label{eq:p1-L1-rate}
\|u^\ep-u^0\|_{L_T^\infty(L^1)}\le C_{M,T}\ep,
\end{equation}
and
\[
\|u^\ep-u^0\|_{L_T^\infty(B_{1,1}^{0})}
\le C_{M,T}
\begin{cases}
\ep\bigl(1+\log(e/\ep)\bigr),&d=1,\\
\ep,&d\ge2.
\end{cases}
\]
The constants are independent of $\ep$.
\end{prop}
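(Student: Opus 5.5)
The plan is to treat the case $p=1$ in two steps. First we derive an $L^1$ estimate for the error $w=u^\ep-u^0$ from the transport equation \eqref{eq:error-equation}. Then we upgrade it to $B_{1,1}^{0}$ by interpolating against a bound in a positive-regularity space $B_{1,1}^{\beta}$. The borderline multiplier estimate \eqref{eq:critical-multiplier} fails at $p=1$, so the $B_{1,1}^{0}$ transport approach of Theorem~\ref{thm:visc} cannot be used directly.

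For \eqref{eq:p1-L1-rate}, apply Lemma~\ref{lem:scalar-estimate} with $p=1$ to \eqref{eq:error-equation}. The transport field is $a=(1-2u^0)\nabla S^0$, and $\|\diver a\|_{L^\infty}\le C_M$ because $u^0\in B_{1,1}^{1+d}\hookrightarrow W^{1,\infty}$. Each source term $F_2,\dots,F_5$ is a product of an $L^\infty$ factor, bounded by $C_M$ via Lemma~\ref{lem:uniform-local} and \eqref{eq:c5-Linf-tools}, with $w$, $\nabla(S^\ep-S^0)$ or $\Delta(S^\ep-S^0)$. By \eqref{eq:c5-Linf-tools} with $p=1$, each of these has $L^1$ norm at most $C\|w\|_{L^1}$. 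For $F_1=\ep\Delta u^\ep$ we need $\|\Delta u^\ep\|_{L^1_T(L^1)}\le C_{M,T}$. Since $1+d\ge2$, this follows from $B_{1,1}^{1+d}\hookrightarrow B_{1,1}^{2}\hookrightarrow W^{2,1}$ and the uniform bound in Lemma~\ref{lem:uniform-local}. Gronwall's inequality then gives $\|w\|_{L_T^\infty(L^1)}\le C_{M,T}\ep$.

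For $d\ge2$, I would run the Besov argument at a positive index $\beta\in(0,1]$ with $\beta<d$, say $\beta=1$. Apply Lemma~\ref{lem:transport-estimate} with $p=p_1=1$, $r=1$, $\sigma=\beta$; this is allowed since $0<\beta<1+d$ and $V'$ involves $\|\nabla a\|_{B_{1,1}^{d}}\le C_M$. The nonlinear sources are bounded by $C_M\|w\|_{B_{1,1}^{\beta}}$ using \eqref{eq:p1-positive-multiplier} and Lemma~\ref{lem:elliptic}. The term $F_1$ satisfies $\ep\|\Delta u^\ep\|_{L^1_T(B_{1,1}^{\beta})}\le C\ep\|u^\ep\|_{L^1_T(B_{1,1}^{2+\beta})}$, and $2+\beta\le 1+d$ when $\beta\le d-1$, so $\beta=1$ works for $d\ge2$. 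This gives $\|w\|_{B_{1,1}^{1}}\le C\ep$, hence $\|w\|_{B_{1,1}^{0}}\le C\ep$ directly.

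For $d=1$, the condition $0<\beta<d$ forces $\beta<1$, and then $2+\beta>1+d$. Bounding $F_1$ would need the smoothing bound $\ep\|u^\ep\|_{L^1_T(B_{1,1}^{3+d})}\le C$ from Lemma~\ref{lem:transport-diffusion}, which only gives a rate $\ep^{\theta}$ with $\theta<1$ in $B_{1,1}^{\beta}$. The step I expect to be the main obstacle is recovering the full rate $\ep$ up to a logarithm. My plan is as follows. Use $L^1\hookrightarrow B_{1,\infty}^{0}$ to get $\|w\|_{B_{1,\infty}^{0}}\le C\ep$ from step one. Then apply Lemma~\ref{lem:log} with some $s>0$, bounding $\|w\|_{B_{1,\infty}^{s}}$ by something at most polynomially large in $1/\ep$; even the trivial bound $\|w\|_{B_{1,1}^{1+d}}\le C_M$ suffices. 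This yields $\|w\|_{B_{1,1}^{0}}\le C\ep(1+\log(e+C/\ep))\le C\ep(1+\log(e/\ep))$. The same logarithmic interpolation could serve as a fallback for $d\ge2$ if the $B_{1,1}^{1}$ transport argument runs into index restrictions.
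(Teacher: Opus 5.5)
Your proposal is correct and follows the paper's proof essentially step by step. The ingredients are the same: the $L^1$ bound via Lemma~\ref{lem:scalar-estimate} and Gronwall, Lemma~\ref{lem:log} with $s=2$ for $d=1$, and the $B_{1,1}^{\beta}$ transport estimate with \eqref{eq:p1-positive-multiplier} for $d\ge2$. In the $d=1$ step, passing from $\|w\|_{B_{1,\infty}^{0}}\le C\ep$ to the final bound uses that $x\mapsto x(1+\log(e+A/x))$ is increasing, which you should state. The only other difference is that you take $\beta=1$ where the paper takes $\beta=1/2$; both satisfy $0<\beta<d$ and $2+\beta\le1+d$ for $d\ge2$, so the choice is immaterial.
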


\begin{proof}
By the embedding $B_{1,1}^{1+d}\hookrightarrow W^{1,\infty}$,
Lemma~\ref{lem:elliptic}, and
\eqref{eq:original-uniform-local}, we have
\[
\begin{aligned}
\|u^\ep\|_{L^\infty}+\|u^0\|_{L^\infty}
+\|\nabla u^\ep\|_{L^\infty}
+\|\nabla u^0\|_{L^\infty}
&\le C\left(
\|u^\ep\|_{B_{1,1}^{1+d}}
+\|u^0\|_{B_{1,1}^{1+d}}
\right)\le C_M,\\[1ex]
\|\nabla S^0\|_{L^\infty}
+\|\Delta S^\ep\|_{L^\infty}
+\|\Delta S^0\|_{L^\infty}
&\le C\left(
\|u^\ep\|_{L^\infty}+\|u^0\|_{L^\infty}
\right)\le C_M,\\[1ex]
\|\nabla(S^\ep-S^0)\|_{L^1}
+\|\Delta(S^\ep-S^0)\|_{L^1}
&\le C\|u^\ep-u^0\|_{L^1}.
\end{aligned}
\]
Arguing as in the proof of Theorem~\ref{thm:visc},
but using H\"older's inequality in place of
\eqref{eq:critical-multiplier}, we obtain
\[
\sum_{i=2}^{5}\|F_i\|_{L^1}
\le C_M\|u^\ep-u^0\|_{L^1}.
\]
Moreover,
\[
\begin{aligned}
\|\diver\bigl((1-2u^0)\nabla S^0\bigr)\|_{L^\infty}
&=\|-2\nabla u^0\cdot\nabla S^0
       +(1-2u^0)\Delta S^0\|_{L^\infty}\\
&\le 2\|\nabla u^0\|_{L^\infty}
       \|\nabla S^0\|_{L^\infty}+\left(1+2\|u^0\|_{L^\infty}\right)
       \|\Delta S^0\|_{L^\infty}\\
&\le C_M.
\end{aligned}
\]
We also have
\[
\begin{aligned}
\|F_1\|_{L_T^1(L^1)}
&=\ep\|\Delta u^\ep\|_{L_T^1(L^1)}\\
&\le C\ep\|u^\ep\|_{L_T^1(B_{1,1}^{2})}\\
&\le C\ep T\|u^\ep\|_{L_T^\infty(B_{1,1}^{1+d})}\\
&\le C_{M,T}\ep.
\end{aligned}
\]
Applying Lemma~\ref{lem:scalar-estimate} with $p=1$
to \eqref{eq:error-equation}, we obtain
\[
\begin{aligned}
\|u^\ep-u^0\|_{L_t^\infty(L^1)}
&\le \sum_{i=1}^{5}\|F_i\|_{L_t^1(L^1)}
+\int_0^t
\|\diver\bigl((1-2u^0)\nabla S^0\bigr)\|_{L^\infty}
\|u^\ep-u^0\|_{L_\tau^\infty(L^1)}\,d\tau\\
&\le C_{M,T}\ep
+C_M\int_0^t
\|u^\ep-u^0\|_{L_\tau^\infty(L^1)}\,d\tau.
\end{aligned}
\]
Gronwall's inequality yields
\[
\|u^\ep-u^0\|_{L_T^\infty(L^1)}
\le C_{M,T}\ep e^{C_M T}
\le C_{M,T}\ep.
\]

If $d=1$, applying Lemma~\ref{lem:log} with $p=1$ and $s=2$
and using the embeddings $L^1\hookrightarrow B_{1,\infty}^{0}$
and $B_{1,1}^{2}\hookrightarrow B_{1,\infty}^{2}$, we obtain
\[
\begin{aligned}
\|u^\ep-u^0\|_{B_{1,1}^{0}}
&\le C\|u^\ep-u^0\|_{B_{1,\infty}^{0}}
\left(
1+\log\left(
e+\frac{\|u^\ep-u^0\|_{B_{1,\infty}^{2}}}
{\|u^\ep-u^0\|_{B_{1,\infty}^{0}}}
\right)\right)\\
&\le C\|u^\ep-u^0\|_{L^1}
\left(
1+\log\left(
e+\frac{C_M}{\|u^\ep-u^0\|_{L^1}}
\right)\right)\\
&\le C_{M,T}\ep\bigl(1+\log(e/\ep)\bigr).
\end{aligned}
\]
Here we have used \eqref{eq:original-uniform-local},
\eqref{eq:p1-L1-rate}, and the fact that the function $\Phi(x)=x(1+\log(e+A/x))$ is increasing on $(0,\infty)$
for every $A>0$.

Consequently, we get
\[
\|u^\ep-u^0\|_{L_T^\infty(B_{1,1}^{0})}
\le C_{M,T}\ep\bigl(1+\log(e/\ep)\bigr).
\]

If $d\ge2$, choose $\beta=1/2$.
Arguing as in the proof of Theorem~\ref{thm:visc},
with \eqref{eq:p1-positive-multiplier} in place of
\eqref{eq:critical-multiplier}, we obtain
\[
\sum_{i=2}^{5}\|F_i\|_{B_{1,1}^{\beta}}
\le C_M\|u^\ep-u^0\|_{B_{1,1}^{\beta}}.
\]
Since $2+\beta\le1+d$, \eqref{eq:original-uniform-local} gives
\[
\begin{aligned}
\|F_1\|_{L_T^1(B_{1,1}^{\beta})}
&\le C\ep\|u^\ep\|_{L_T^1(B_{1,1}^{2+\beta})}\\
&\le C\ep T\|u^\ep\|_{L_T^\infty(B_{1,1}^{1+d})}
\le C_{M,T}\ep.
\end{aligned}
\]
Moreover, as above, we have
\[
\|\nabla\bigl((1-2u^0)\nabla S^0\bigr)\|
_{L_T^\infty(B_{1,1}^{d})}\le C_M.
\]
Applying Lemma~\ref{lem:transport-estimate} with
$p=p_1=r=1$ and $\sigma=\beta$ to \eqref{eq:error-equation},
we obtain, for $0\le t\le T$,
\[
\begin{aligned}
\|u^\ep-u^0\|_{L_t^\infty(B_{1,1}^{\beta})}
&\le C_{M,T}\sum_{i=1}^{5}
\|F_i\|_{L_t^1(B_{1,1}^{\beta})}\\
&\le C_{M,T}\ep
+C_{M,T}\int_0^t
\|u^\ep-u^0\|_{L_\tau^\infty(B_{1,1}^{\beta})}\,d\tau.
\end{aligned}
\]
Gronwall's inequality and the embedding
$B_{1,1}^{\beta}\hookrightarrow B_{1,1}^{0}$ yield
\[
\|u^\ep-u^0\|_{L_T^\infty(B_{1,1}^{0})}
\le C\|u^\ep-u^0\|_{L_T^\infty(B_{1,1}^{\beta})}
\le C_{M,T}\ep.
\]
This completes the proof.
\end{proof}

\section[Inviscid limit for p infinity]{Inviscid limit for $p=\infty$}
\label{sec:endpoint-vanishing-viscosity}

In this section, we prove Theorem~\ref{thm:c5-endpoint-vv} and study
the convergence rates and their optimality. Let $T=T_*$ and
$M=4\Bigl(1+\|u_0\|_{B_{\infty,1}^{1}}\Bigr)$.
Let $C_{M,T}=C(d,M,T)>0$ be independent
of $\ep$ and $N$. Additional dependencies will be indicated explicitly.
By Lemma~\ref{lem:uniform-local}, we have on $[0,T]$
\begin{equation}\label{eq:c5-uniform-endpoint}
\|u^\ep\|_{B_{\infty,1}^{1}}
+\|u^0\|_{B_{\infty,1}^{1}}
\le M.
\end{equation}
By the definition of $S_N$ and Lemma~\ref{lem:bernstein},
for $N\ge1$ and $s>1$, we have
\[
\|S_Nu_0\|_{B_{\infty,1}^{s}}
\le C_s2^{(s-1)N}\|u_0\|_{B_{\infty,1}^{1}},
\qquad
\|u_0-S_Nu_0\|_{L^\infty}
\le C2^{-N}\|u_0-S_Nu_0\|_{B_{\infty,1}^{1}}.
\]
Moreover, the $\ell^1$ summability defining
$B_{\infty,1}^{1}$ implies that
\[
\|u_0-S_Nu_0\|_{B_{\infty,1}^{1}}
\longrightarrow0,\qquad N\to\infty.
\]

\begin{lem}\label{lem:c5-uniform-difference}
Fix $\ep\in[0,1]$. Let $u$ and $v$ solve \eqref{eq:c5-unified}
with initial data $u_0$ and $v_0$, respectively. Assume that
\[
u,v\in C([0,T],B_{\infty,1}^{1}(\R^d)),
\qquad u(0)=u_0,\quad v(0)=v_0.
\]
Assume
\[
\|u\|_{L_T^\infty(B_{\infty,1}^{1})}+
\|v\|_{L_T^\infty(B_{\infty,1}^{1})}\le M.
\]
Then
\begin{equation}\label{eq:c5-difference-Linf}
\|u-v\|_{L_T^\infty(L^\infty)}\le C_{M,T}\|u_0-v_0\|_{L^\infty}.
\end{equation}
If, in addition, $v\in L^\infty(0,T;B_{\infty,1}^{2}(\R^d))$, then
\begin{equation}\label{eq:c5-difference-B1}
\|u-v\|_{L_T^\infty(B_{\infty,1}^{1})}
\le C_{M,T}\left(
\|u_0-v_0\|_{B_{\infty,1}^{1}}+
\int_0^T\|v(t)\|_{B_{\infty,1}^{2}}\|u(t)-v(t)\|_{L^\infty}\,dt
\right).
\end{equation}
The constants are independent of $\ep$.
\end{lem}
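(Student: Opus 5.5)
We write $w=u-v$ and derive its equation exactly as in \eqref{eq:error-equation}, but now with the same viscosity $\ep$ on both sides. Set $S_u=(I-\Delta)^{-1}u$, $S_v=(I-\Delta)^{-1}v$ and $a=(1-2v)\nabla S_v$. Subtracting the two copies of \eqref{eq:c5-unified} gives
\[
\partial_t w+a\cdot\nabla w-\ep\Delta w=G,\qquad w(0)=u_0-v_0,
\]
with
\[
G=-(1-2u)\nabla u\cdot\nabla(S_u-S_v)+2w\nabla S_v\cdot\nabla u-w(1-u-v)\Delta S_u-v(1-v)\Delta(S_u-S_v).
\]
Compared with \eqref{eq:error-sources}, the term $F_1$ is absent because the viscosity is the same. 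Every term of $G$ is linear in $w$ or in $S_u-S_v$.

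\textbf{The $L^\infty$ bound.} By \eqref{eq:prelim-embeddings} and the uniform bound $M$, the quantities $u,v,\nabla u,\nabla v$ are bounded in $L^\infty$ by $C_M$. By \eqref{eq:c5-Linf-tools}, $\|\nabla(S_u-S_v)\|_{L^\infty}+\|\Delta(S_u-S_v)\|_{L^\infty}\le C\|w\|_{L^\infty}$, and $\|\nabla S_v\|_{L^\infty}+\|\Delta S_u\|_{L^\infty}\le C_M$. Hence $\|G\|_{L^\infty}\le C_M\|w\|_{L^\infty}$. Lemma~\ref{lem:scalar-estimate} with $p=\infty$ has no divergence term, is uniform in $\ep\ge0$, and gives
\[
\|w(t)\|_{L^\infty}\le\|u_0-v_0\|_{L^\infty}+C_M\int_0^t\|w\|_{L^\infty}.
\]
Gronwall's inequality then yields \eqref{eq:c5-difference-Linf}.

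\textbf{The $B_{\infty,1}^1$ bound.} Here the point is to avoid losing a derivative. Applying \eqref{eq:c5-critical-transport-tools} directly with the coefficient $a$ would require $\|G\|_{B^1_{\infty,1}}$, and the term $(1-2u)\nabla u\cdot\nabla(S_u-S_v)$ is only in $B^0_{\infty,1}$ because $\nabla u\in B^0_{\infty,1}$. So I would instead write the equation for $w$ with the transport field $b=(1-2u)\nabla S_u$ belonging to $u$. Then the cross term becomes
\[
-\bigl((1-2u)\nabla S_u-(1-2v)\nabla S_v\bigr)\cdot\nabla v,
\]
which puts the derivative on $v$, the smoother solution. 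The remaining terms are $-u(1-u)\Delta S_u+v(1-v)\Delta S_v$. All source terms are now of the form (a $B^1_{\infty,1}$ function built from $u$, $v$, $S_u$, $S_v$) times ($w$, $\nabla(S_u-S_v)$, or $\Delta(S_u-S_v)$), or of the form $[\text{difference}]\cdot\nabla v$.

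For the cross term I use the tame product estimate Corollary~\ref{lem:bony} with $s=1$:
\[
\bigl\|\bigl((1-2u)\nabla S_u-(1-2v)\nabla S_v\bigr)\cdot\nabla v\bigr\|_{B^1_{\infty,1}}\le C\bigl(\|\mathrm{diff}\|_{L^\infty}\|\nabla v\|_{B^1_{\infty,1}}+\|\mathrm{diff}\|_{B^1_{\infty,1}}\|\nabla v\|_{L^\infty}\bigr).
\]
The first product is bounded by $C_M\|v\|_{B^2_{\infty,1}}\|w\|_{L^\infty}$, which is exactly the integrand in \eqref{eq:c5-difference-B1}. The second is bounded by $C_M\|w\|_{B^1_{\infty,1}}$, using Lemma~\ref{lem:elliptic} and the algebra property of $B^1_{\infty,1}$. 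The zero-order terms are bounded by $C_M\|w\|_{B^1_{\infty,1}}$ in the same way, since $(I-\Delta)^{-1}$ and $\Delta(I-\Delta)^{-1}$ are bounded on $B^1_{\infty,1}$. By \eqref{eq:c5-critical-transport-tools} with $\|b\|_{B^1_{\infty,1}}\le C_M$,
\[
\|w(t)\|_{B^1_{\infty,1}}\le\|w_0\|_{B^1_{\infty,1}}+C_M\int_0^t\|w\|_{B^1_{\infty,1}}+C_M\int_0^t\|v\|_{B^2_{\infty,1}}\|w\|_{L^\infty},
\]
uniformly in $\ep$, and Gronwall's inequality yields \eqref{eq:c5-difference-B1}.

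The main obstacle is this choice of transport field in the $B^1_{\infty,1}$ step. It must be chosen so that the derivative falls on $v$, and the commutator estimate \eqref{eq:c5-commutator-tools} must be applied with $f=w$ and $s=1$. The commutator term $\|\nabla w\|_{L^\infty}\|b\|_{B^1_{\infty,1}}$ is harmless because $\|\nabla w\|_{L^\infty}\le C\|w\|_{B^1_{\infty,1}}$.
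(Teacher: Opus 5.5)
Your proof is correct and takes essentially the paper's approach. In the $B^1_{\infty,1}$ step you use the same decomposition (transport field $(1-2u)\nabla S_u$, so that the cross term carries $\nabla v$), followed by the tame product estimate, \eqref{eq:c5-critical-transport-tools} and Gronwall. The only deviation is that for the $L^\infty$ bound you use the error-equation form with $v$'s transport field instead of $u$'s. This is immaterial, since Lemma~\ref{lem:scalar-estimate} at $p=\infty$ has no divergence term and the sources are bounded by $C_M\|u-v\|_{L^\infty}$ in either form.
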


\begin{proof}
Subtracting \eqref{eq:c5-unified} for $v$ from that for $u$,
we obtain
\begin{equation}\label{eq:c5-difference-equation}
\begin{aligned}
 &\partial_t(u-v)-\ep\Delta(u-v)
 +(1-2u)\nabla(I-\Delta)^{-1}u\cdot\nabla(u-v)\\
 =&-\bigl((1-2u)\nabla(I-\Delta)^{-1}u
          -(1-2v)\nabla(I-\Delta)^{-1}v\bigr)\cdot\nabla v\\
 &-u(1-u)\Delta(I-\Delta)^{-1}u
       +v(1-v)\Delta(I-\Delta)^{-1}v.
\end{aligned}
\end{equation}
The identities
\[
\begin{aligned}
 &(1-2u)\nabla(I-\Delta)^{-1}u
       -(1-2v)\nabla(I-\Delta)^{-1}v\\
 =&(1-2u)\nabla(I-\Delta)^{-1}(u-v)
                   -2(u-v)\nabla(I-\Delta)^{-1}v
\end{aligned}
\]
and
\[
\begin{aligned}
 &u(1-u)\Delta(I-\Delta)^{-1}u
             -v(1-v)\Delta(I-\Delta)^{-1}v\\
 =&(u-v)(1-u-v)\Delta(I-\Delta)^{-1}u
                    +v(1-v)\Delta(I-\Delta)^{-1}(u-v)
\end{aligned}
\]
show, by \eqref{eq:c5-Linf-tools} and the assumption
\[
 \|u\|_{L_T^\infty(B_{\infty,1}^{1})}
 +\|v\|_{L_T^\infty(B_{\infty,1}^{1})}\le M,
\]
that the right-hand side of
\eqref{eq:c5-difference-equation} is bounded in $L^\infty$ by
$C_M\|u-v\|_{L^\infty}$.
Lemma~\ref{lem:scalar-estimate} with $p=\infty$ gives
\[
 \|u(t)-v(t)\|_{L^\infty}
 \le \|u_0-v_0\|_{L^\infty}
 +C_M\int_0^t
 \|u(\tau)-v(\tau)\|_{L^\infty}\,d\tau.
\]
Gronwall's inequality then yields
\[
 \|u(t)-v(t)\|_{L^\infty}
 \le e^{C_Mt}\|u_0-v_0\|_{L^\infty},
\]
which proves \eqref{eq:c5-difference-Linf}.

To prove \eqref{eq:c5-difference-B1}, we use
\eqref{eq:c5-tame-tools} and \eqref{eq:c5-multiplier-tools}
to obtain
\[
\begin{aligned}
 &\|\bigl((1-2u)\nabla(I-\Delta)^{-1}u
          -(1-2v)\nabla(I-\Delta)^{-1}v\bigr)
                    \cdot\nabla v\|_{B_{\infty,1}^{1}}\\
 &\quad\le C_M\|u-v\|_{B_{\infty,1}^{1}}
       +C_M\|v\|_{B_{\infty,1}^{2}}\|u-v\|_{L^\infty}.
\end{aligned}
\]
The difference of the two reaction terms has
$B_{\infty,1}^{1}$ norm at most $C_M\|u-v\|_{B_{\infty,1}^{1}}$. The transport velocity obeys
\[
\begin{aligned}
\|(1-2u)\nabla(I-\Delta)^{-1}u\|_{B_{\infty,1}^{1}}+\|\nabla\bigl((1-2u)\nabla(I-\Delta)^{-1}u\bigr)\|_{L^\infty}\le C_M.
\end{aligned}
\]
Applying \eqref{eq:c5-critical-transport-tools} to \eqref{eq:c5-difference-equation}  therefore yields
\begin{equation}\label{eq:c5-F-B1}
\begin{aligned}
 \|u(t)-v(t)\|_{B_{\infty,1}^{1}}
 &\le\|u_0-v_0\|_{B_{\infty,1}^{1}}
        +C_M\int_0^t\|u(\tau)-v(\tau)\|_{B_{\infty,1}^{1}}\,d\tau\\
 &\quad+C_M\int_0^t\|v(\tau)\|_{B_{\infty,1}^{2}}
                     \|u(\tau)-v(\tau)\|_{L^\infty}\,d\tau.
\end{aligned}
\end{equation}
Gronwall's inequality proves \eqref{eq:c5-difference-B1}.
\end{proof}

\begin{lem}\label{lem:c5-high-regularity}
Let $u$ be a smooth solution with diffusion coefficient $\ep\in[0,1]$ on an interval $[0,t_*]$, with $t_*\le T$ and $\|u\|_{L_{t_*}^\infty(B_{\infty,1}^{1})}\le K$. If $u(0)\in B_{\infty,1}^{s}$ for $s>1$, then
\begin{equation}\label{eq:c5-high-regularity-bound}
 \|u\|_{L_{t_*}^\infty(B_{\infty,1}^{s})}
 \le C_{s,K,T}\|u(0)\|_{B_{\infty,1}^{s}}.
\end{equation}
\end{lem}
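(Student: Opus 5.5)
We prove \eqref{eq:c5-high-regularity-bound} by a linear (tame) Gronwall argument in $B_{\infty,1}^{s}$. Write \eqref{eq:c5-unified} as the transport--diffusion equation \eqref{transport} with velocity
\[
a=(1-2u)\nabla(I-\Delta)^{-1}u
\]
and source
\[
h=-u(1-u)\Delta(I-\Delta)^{-1}u .
\]
The point is that every high-norm quantity should appear linearly, with coefficients controlled only by the $B_{\infty,1}^{1}$ bound $K$. Since $B_{\infty,1}^{1}\hookrightarrow W^{1,\infty}$, we have $\|u\|_{L^\infty}+\|\nabla u\|_{L^\infty}\le CK$. By \eqref{eq:c5-Linf-tools}, also $\|\nabla(I-\Delta)^{-1}u\|_{W^{1,\infty}}\le C_K$ and $\|\Delta(I-\Delta)^{-1}u\|_{L^\infty}\le C_K$. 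The time derivative will be handled by the fact that $u$ is smooth, so all norms are finite and continuous in time.

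\textbf{Step 1: dyadic energy estimate.} Apply $\Delta_j$ to the equation and use Lemma~\ref{lem:scalar-estimate} with $p=\infty$; the diffusion term causes no harm, uniformly in $\ep$. Multiply by $2^{js}$ and sum. This gives
\[
\|u(t)\|_{B_{\infty,1}^{s}}\le\|u(0)\|_{B_{\infty,1}^{s}}+\int_0^t\Bigl(\sum_j2^{js}\|[a\cdot\nabla,\Delta_j]u\|_{L^\infty}+\|h\|_{B_{\infty,1}^{s}}\Bigr)d\tau .
\]
For the commutator we use the tame estimate \eqref{eq:c5-commutator-tools}:
\[
\sum_j2^{js}\|[a\cdot\nabla,\Delta_j]u\|_{L^\infty}\le C_s\bigl(\|\nabla a\|_{L^\infty}\|u\|_{B_{\infty,1}^{s}}+\|\nabla u\|_{L^\infty}\|a\|_{B_{\infty,1}^{s}}\bigr).
\]
Here $\|\nabla a\|_{L^\infty}\le C_K$ by the bounds above.

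\textbf{Step 2: tame bounds for $a$ and $h$.} Apply Corollary~\ref{lem:bony} to the products defining $a$. Each factor is bounded in $L^\infty$ by $C_K$, and Lemma~\ref{lem:elliptic} gives
\[
\|\nabla(I-\Delta)^{-1}u\|_{B_{\infty,1}^{s}}+\|\Delta(I-\Delta)^{-1}u\|_{B_{\infty,1}^{s}}\le C_s\|u\|_{B_{\infty,1}^{s}} .
\]
Together these yield
\[
\|a\|_{B_{\infty,1}^{s}}+\|h\|_{B_{\infty,1}^{s}}\le C_{s,K}\bigl(1+\|u\|_{B_{\infty,1}^{s}}\bigr).
\]
For $u(1-u)$, we use $\|u^2\|_{B^s}\le C\|u\|_{L^\infty}\|u\|_{B^s}$. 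The constant $1$ in $1-2u$ contributes nothing at positive regularity except through the low block. We handle this by applying the product estimate to $u\cdot X$ rather than to $1\cdot X$, since $X$ itself is linear in $u$. With this, the bound is in fact homogeneous:
\[
\|a\|_{B_{\infty,1}^{s}}+\|h\|_{B_{\infty,1}^{s}}\le C_{s,K}\|u\|_{B_{\infty,1}^{s}} .
\]

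\textbf{Step 3: Gronwall.} Combining Steps 1 and 2 gives
\[
\|u(t)\|_{B_{\infty,1}^{s}}\le\|u(0)\|_{B_{\infty,1}^{s}}+C_{s,K}\int_0^t\|u\|_{B_{\infty,1}^{s}}\,d\tau ,
\]
and Gronwall on $[0,t_*]\subset[0,T]$ yields the claim with $C_{s,K,T}=e^{C_{s,K}T}$.

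\textbf{Main obstacle.} The step to watch is Step 2: ensuring that no superlinear term in $\|u\|_{B^s}$ appears, and that the inhomogeneous constant $1$ does not produce an additive term not proportional to $\|u(0)\|_{B^s}$. Writing $a=X-2uX$ with $X=\nabla(I-\Delta)^{-1}u$, and similarly $h=-(u-u^2)Y$ with $Y=\Delta(I-\Delta)^{-1}u$, makes every term at least linear in $u$ with an $L^\infty$-bounded cofactor. This settles it, since the product estimate is tame: the high norm falls on one factor only.
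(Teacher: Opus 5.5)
Your proposal is correct and follows essentially the same route as the paper: dyadic $L^\infty$ estimates from Lemma~\ref{lem:scalar-estimate}, uniform in $\ep$, combined with the tame commutator bound \eqref{eq:c5-commutator-tools} and tame product and multiplier bounds, so that every high-norm term is linear in $\|u\|_{B_{\infty,1}^{s}}$ with coefficients depending only on $K$, then Gronwall. One small imprecision: \eqref{eq:c5-Linf-tools} does not directly give the Hessian bound $\|\nabla^2(I-\Delta)^{-1}u\|_{L^\infty}\le C_K$, but you obtain it by applying that estimate to $f=\nabla u$, or from \eqref{eq:c5-multiplier-tools} with $s=0$ together with $B_{\infty,1}^{1}\hookrightarrow W^{1,\infty}$.
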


\begin{proof}
Applying $\Delta_j$ to \eqref{eq:c5-unified} gives
\[
\begin{aligned}
 &\partial_t\Delta_ju+(1-2u)\nabla(I-\Delta)^{-1}u
                \cdot\nabla\Delta_ju-\ep\Delta\Delta_ju\\
= &-\Delta_j\bigl(u(1-u)\Delta(I-\Delta)^{-1}u\bigr)
   +[(1-2u)\nabla(I-\Delta)^{-1}u\cdot\nabla,\Delta_j]u.
\end{aligned}
\]
By Lemma~\ref{lem:scalar-estimate} with $p=\infty$ and
\eqref{eq:c5-commutator-tools}, together with
\eqref{eq:c5-tame-tools}, \eqref{eq:c5-multiplier-tools}
and \eqref{eq:c5-Linf-tools}, we obtain
\[
\begin{alignedat}{2}
 \|u(t)\|_{B_{\infty,1}^{s}}
 &\le\|u(0)\|_{B_{\infty,1}^{s}}
 &&{}+C_s\int_0^t
       \|\nabla\bigl((1-2u)\nabla(I-\Delta)^{-1}u\bigr)\|_{L^\infty}
       \|u\|_{B_{\infty,1}^{s}}\,d\tau\\
 &&&{}+C_s\int_0^t
       \|\nabla u\|_{L^\infty}
       \|(1-2u)\nabla(I-\Delta)^{-1}u\|_{B_{\infty,1}^{s}}\,d\tau\\
 &&&{}+C_s\int_0^t
       \|u(1-u)\Delta(I-\Delta)^{-1}u\|_{B_{\infty,1}^{s}}\,d\tau\\
 &\le\|u(0)\|_{B_{\infty,1}^{s}}
 &&{}+C_s\int_0^t
       \|\nabla\bigl((1-2u)\nabla(I-\Delta)^{-1}u\bigr)\|_{L^\infty}
       \|u\|_{B_{\infty,1}^{s}}\,d\tau\\
 &&&{}+C_s\int_0^t
       \|\nabla u\|_{L^\infty}
       \bigl(1+\|u\|_{L^\infty}\bigr)
       \|u\|_{B_{\infty,1}^{s}}\,d\tau\\
 &&&{}+C_s\int_0^t
       \|u\|_{L^\infty}
       \bigl(1+\|u\|_{L^\infty}\bigr)
       \|u\|_{B_{\infty,1}^{s}}\,d\tau\\
 &\le\|u(0)\|_{B_{\infty,1}^{s}}
 &&{}+C_{s,K}\int_0^t
       \|u(\tau)\|_{B_{\infty,1}^{s}}\,d\tau,
\end{alignedat}
\]
where the last inequality follows from
$\|u\|_{L_{t_*}^\infty(B_{\infty,1}^{1})}\le K$
and the embedding
$B_{\infty,1}^{1}\hookrightarrow W^{1,\infty}$.
Gronwall's inequality proves
\eqref{eq:c5-high-regularity-bound}.
\end{proof}

\begin{lem}\label{lem:regularization}
There exists $N_0\ge1$, depending on $u_0$ and $T$ but independent of
$\ep$, such that the solutions $u_N^\ep$ of \eqref{eq:c5-unified}
with initial data $S_Nu_0$ exist on $[0,T]$ for all $N\ge N_0$ and
$\ep\in[0,1]$. They satisfy
\begin{equation}\label{eq:c5-truncated-low-order}
 \|u_N^\ep\|_{B_{\infty,1}^{1}}
 +\|u_N^0\|_{B_{\infty,1}^{1}}
 \le2(M+1).
\end{equation}
For $s>1$,
\begin{equation}\label{eq:c5-high-regularity}
\|u_N^\ep\|_{L_T^\infty(B_{\infty,1}^{s})}
+\|u_N^0\|_{L_T^\infty(B_{\infty,1}^{s})}
\le C_{s,M,T}\|S_Nu_0\|_{B_{\infty,1}^{s}}.
\end{equation}
Moreover, for every $\ep\in[0,1]$,
\begin{equation}\label{eq:regularization-errors}
\begin{aligned}
\|u^\ep-u_N^\ep\|_{L_T^\infty(L^\infty)}
&\le C_{M,T}2^{-N}\|u_0-S_Nu_0\|_{B_{\infty,1}^{1}},\\
\|u^\ep-u_N^\ep\|_{L_T^\infty(B_{\infty,1}^{1})}
&\le C_{M,T}\|u_0-S_Nu_0\|_{B_{\infty,1}^{1}}.
\end{aligned}
\end{equation}
\end{lem}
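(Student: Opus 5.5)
The plan is to combine the uniform local existence of Lemma~\ref{lem:uniform-local} with the stability estimates of Lemma~\ref{lem:c5-uniform-difference} and the propagation of regularity in Lemma~\ref{lem:c5-high-regularity}. The one real difficulty is that $T=T_*$ was fixed from $u_0$, whereas Lemma~\ref{lem:uniform-local} applied to $S_Nu_0$ only gives a time depending on $\|S_Nu_0\|_{B_{\infty,1}^{1}}$. That norm is at most $C\|u_0\|_{B_{\infty,1}^{1}}$, but $C$ is not $1$, so the time it yields could be shorter than $T$. I will therefore run a continuity (bootstrap) argument that uses closeness to $u^\ep$, rather than the a priori estimate alone.

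First, fix $\ep\in[0,1]$ and $N$. Since $S_Nu_0\in B_{\infty,1}^{s}$ for every $s$, local well-posedness gives a solution $u_N^\ep$ on a maximal interval $[0,T_N^\ep)$. It is smooth enough to apply Lemmas~\ref{lem:c5-uniform-difference} and~\ref{lem:c5-high-regularity}, and by the continuation criterion it persists as long as its $B_{\infty,1}^{1}$ norm stays bounded. Let $t_1\le T$ be the largest time up to which $\|u_N^\ep\|_{B_{\infty,1}^{1}}\le M+1$. On $[0,t_1]$ both $u^\ep$ and $u_N^\ep$ are bounded by $2M+1$ in $B_{\infty,1}^{1}$ (using \eqref{eq:c5-uniform-endpoint}), so Lemma~\ref{lem:c5-uniform-difference} applies with $M$ replaced by $2M+1$, and its constant is uniform in $\ep$ and $N$. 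Estimate \eqref{eq:c5-difference-Linf} together with the Bernstein bound $\|u_0-S_Nu_0\|_{L^\infty}\le C2^{-N}\|u_0-S_Nu_0\|_{B_{\infty,1}^{1}}$ gives the first line of \eqref{eq:regularization-errors} on $[0,t_1]$.

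For the $B_{\infty,1}^{1}$ error I apply \eqref{eq:c5-difference-B1} with $v=u_N^\ep$. This needs $\|u_N^\ep\|_{B_{\infty,1}^{2}}$. By Lemma~\ref{lem:c5-high-regularity} with $K=M+1$ and $s=2$, on $[0,t_1]$ it is bounded by $C_{M,T}\|S_Nu_0\|_{B_{\infty,1}^{2}}\le C_{M,T}2^{N}\|u_0\|_{B_{\infty,1}^{1}}$. Multiplying this by the $L^\infty$ error $C_{M,T}2^{-N}\|u_0-S_Nu_0\|_{B_{\infty,1}^{1}}$, the factors $2^{N}$ and $2^{-N}$ cancel, and the integral term is $O(\|u_0-S_Nu_0\|_{B_{\infty,1}^{1}})$. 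This cancellation is the key point. The initial term $\|u_0-S_Nu_0\|_{B_{\infty,1}^{1}}$ is of the same size, so the second line of \eqref{eq:regularization-errors} holds on $[0,t_1]$, with constants depending only on $M,T$.

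To close the bootstrap, choose $N_0$ (depending only on $u_0$ and $T$, through $M$ and the tail of $u_0$) so large that $C_{M,T}\|u_0-S_Nu_0\|_{B_{\infty,1}^{1}}\le 1/2$ for all $N\ge N_0$. Then on $[0,t_1]$,
\[
\|u_N^\ep\|_{B_{\infty,1}^{1}}\le \|u^\ep\|_{B_{\infty,1}^{1}}+\tfrac12\le M+\tfrac12 .
\]
By continuity in time, $t_1$ cannot be smaller than both $T$ and $T_N^\ep$. The continuation criterion then excludes $T_N^\ep\le T$, so $t_1=T$. This gives existence on $[0,T]$ and \eqref{eq:c5-truncated-low-order}, with $\|u_N^\ep\|,\|u_N^0\|\le M+1$ each. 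Finally, \eqref{eq:c5-high-regularity} follows from Lemma~\ref{lem:c5-high-regularity} with $K=M+1$, on the full interval $[0,T]$, for both $\ep$ and $\ep=0$. The main obstacle is making the constants in the bootstrap genuinely independent of $N$ and $\ep$, and the $2^{N}\cdot2^{-N}$ cancellation is what makes this work.
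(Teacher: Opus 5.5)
Your proposal is correct and follows the paper's proof essentially step for step. It uses the same bootstrap on $\|u_N^\ep\|_{B_{\infty,1}^{1}}\le M+1$, Lemma~\ref{lem:c5-uniform-difference} with bound $2M+1$, the $s=2$ propagation bound $C_{M,T}2^N$ cancelling the $2^{-N}$ factor in the $L^\infty$ error, the choice of $N_0$ making the $B_{\infty,1}^{1}$ error at most $1/2$, and finally continuation plus Lemma~\ref{lem:c5-high-regularity} with $K=M+1$.

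The only point both arguments leave implicit is that the bootstrap can start. This holds because $\|S_Nu_0\|_{B_{\infty,1}^{1}}\le\|u_0\|_{B_{\infty,1}^{1}}+\|u_0-S_Nu_0\|_{B_{\infty,1}^{1}}<M+1$ for $N\ge N_0$.
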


\begin{proof}
The datum $S_Nu_0$ belongs to $B_{\infty,1}^{m}$ for every integer
$m\ge1$. Lemma~\ref{lem:uniform-local} gives a local critical solution. For $\ep=0$, local well-posedness in higher Besov spaces
follows from \cite{ZhouZhangMu2021}.
For each fixed $\ep>0$, the local construction in the proof of
Lemma~\ref{lem:uniform-local}, carried out in higher Besov spaces,
yields a local smooth solution. Uniqueness identifies these
solutions with the critical solution; denote it by $u_N^\ep$.
Whenever its critical norm stays bounded, Lemma~\ref{lem:c5-high-regularity}
bounds every higher norm. Restarting the local theory therefore
continues the smooth solution.

Recall that $M=4\bigl(1+\|u_0\|_{B_{\infty,1}^{1}}\bigr)$.
On any interval $[0,t]$ on which
\begin{equation}\label{eq:c5-bootstrap-critical}
 \|u_N^\ep\|_{L_t^\infty(B_{\infty,1}^{1})}\le M+1,
\end{equation}
we apply \eqref{eq:c5-high-regularity-bound} to $u_N^\ep$
with $s=2$ and $K=M+1$, and use
Lemma~\ref{lem:bernstein} to obtain
\[
\begin{aligned}
 \|u_N^\ep\|_{L_t^\infty(B_{\infty,1}^{2})}
 &\le C_{M,T}\|S_Nu_0\|_{B_{\infty,1}^{2}}\\
 &\le C_{M,T}2^N\|u_0\|_{B_{\infty,1}^{1}}\\
 &\le C_{M,T}2^N.
\end{aligned}
\]
By \eqref{eq:c5-uniform-endpoint} and
\eqref{eq:c5-bootstrap-critical}, we have
\[
 \|u^\ep\|_{L_t^\infty(B_{\infty,1}^{1})}
 +\|u_N^\ep\|_{L_t^\infty(B_{\infty,1}^{1})}
 \le M+(M+1)=2M+1.
\]
Applying Lemma~\ref{lem:c5-uniform-difference}
to $u^\ep$ and $u_N^\ep$ on $[0,t]$, we obtain
\[
\begin{aligned}
 \|u^\ep-u_N^\ep\|_{L_t^\infty(L^\infty)}
 &\le C_{M,T}\|u_0-S_Nu_0\|_{L^\infty}\\
 &\le C_{M,T}2^{-N}
       \|u_0-S_Nu_0\|_{B_{\infty,1}^{1}},
\end{aligned}
\]
and
\[
\begin{aligned}
 \|u^\ep-u_N^\ep\|_{L_t^\infty(B_{\infty,1}^{1})}
 &\le C_{M,T}\left(
       \|u_0-S_Nu_0\|_{B_{\infty,1}^{1}}
       +\int_0^t
        \|u_N^\ep(\tau)\|_{B_{\infty,1}^{2}}
        \|u^\ep(\tau)-u_N^\ep(\tau)\|_{L^\infty}
        \,d\tau\right)\\
 &\le C_{M,T}
       \|u_0-S_Nu_0\|_{B_{\infty,1}^{1}}.
\end{aligned}
\]
Since $\|u_0-S_Nu_0\|_{B_{\infty,1}^{1}}\to0$,
choose $N_0$ such that for all $N\ge N_0$
\[
  \|u^\ep-u_N^\ep\|_{L_t^\infty(B_{\infty,1}^{1})}\le C_{M,T}\|u_0-S_Nu_0\|_{B_{\infty,1}^{1}}
 \le\frac12.
\]
Fix $N\ge N_0$ and $\ep\in[0,1]$.
Then we get
\[
 \|u_N^\ep\|_{L_t^\infty(B_{\infty,1}^{1})}
 \le M+C_{M,T}\|u_0-S_Nu_0\|_{B_{\infty,1}^{1}}
 \le M+\frac12<M+1.
\]
The bootstrap and continuation argument therefore extends
$u_N^\ep$ to $[0,T]$, proving \eqref{eq:c5-truncated-low-order}
and \eqref{eq:regularization-errors}.
Applying \eqref{eq:c5-high-regularity-bound} with $K=M+1$
on $[0,T]$ gives \eqref{eq:c5-high-regularity}.
\end{proof}

\begin{proof}[Proof of Theorem~\ref{thm:c5-endpoint-vv}]
Using Lemma~\ref{lem:transport-diffusion}, we first establish
the uniform smoothing bound
\[
 \ep\|u^\ep\|_{L_T^1(B_{\infty,1}^{3})}\le C_{M,T}.
\]
By \eqref{eq:c5-unified}, \eqref{eq:c5-tame-tools}
and \eqref{eq:c5-multiplier-tools}, we have
\[
 \|(1-2u^\ep)\nabla(I-\Delta)^{-1}u^\ep\|_{B_{\infty,1}^{1}}
 +\|-u^\ep(1-u^\ep)\Delta(I-\Delta)^{-1}u^\ep\|_{B_{\infty,1}^{1}}\le C_M.
\]
\eqref{eq:prelim-derivative} gives
\[
 \|\nabla \bigl((1-2u^\ep)\nabla(I-\Delta)^{-1}u^\ep\bigr)\|_{B_{\infty,1}^{0}}
 \le C\|(1-2u^\ep)\nabla(I-\Delta)^{-1}u^\ep\|_{B_{\infty,1}^{1}}\le C_M.
\]
Applying Lemma~\ref{lem:transport-diffusion} to $u_N^\ep$
with $p=\infty$, $r=1$, $s=1$, and $\rho=\rho_1=1$,
and using Lemma~\ref{lem:regularization} and Fatou's lemma,
we obtain
\[
\ep\|u^\ep\|_{L_T^1(B_{\infty,1}^{3})}
\le \liminf_{N\to\infty}
\ep\|u_N^\ep\|_{L_T^1(B_{\infty,1}^{3})}
\le C_{M,T},
\qquad 0<\ep\le1.
\]
By \eqref{eq:c5-Linf-tools}, \eqref{eq:error-sources},
and \eqref{eq:c5-uniform-endpoint}, we have
\[
\begin{aligned}
\sum_{i=2}^{5}\|F_i\|_{L^\infty}
&\le C_M\bigl(
\|\nabla(S^\ep-S^0)\|_{L^\infty}
+\|u^\ep-u^0\|_{L^\infty}
+\|\Delta(S^\ep-S^0)\|_{L^\infty}
\bigr)\\
&\le C_M\|u^\ep-u^0\|_{L^\infty}.
\end{aligned}
\]
Applying Lemma~\ref{lem:scalar-estimate} with $p=\infty$
to \eqref{eq:error-equation} and using
$u^\ep(0)=u^0(0)$, we obtain
\[
\|u^\ep(t)-u^0(t)\|_{L^\infty}
\le \ep\int_0^t\|\Delta u^\ep(\tau)\|_{L^\infty}\,d\tau
+C_M\int_0^t\|u^\ep(\tau)-u^0(\tau)\|_{L^\infty}\,d\tau.
\]
Gronwall's inequality and Lemma~\ref{lem:bernstein} yield
\begin{equation}\label{eq:c5-direct-Linf-pre}
\|u^\ep-u^0\|_{L_T^\infty(L^\infty)}
\le C_{M,T}\ep\|\Delta u^\ep\|_{L_T^1(L^\infty)}
\le C_{M,T}\ep\|u^\ep\|_{L_T^1(B_{\infty,1}^{2})}.
\end{equation}
Taking $r=1$, $s_1=1$, $s_2=3$, and $\theta=1/2$
in \eqref{eq:besov-interpolation}, we obtain, for all $t\in[0,T]$,
\[
\|u^\ep(t)\|_{B_{\infty,1}^{2}}
\le C\|u^\ep(t)\|_{B_{\infty,1}^{1}}^{1/2}
\|u^\ep(t)\|_{B_{\infty,1}^{3}}^{1/2}.
\]
The smoothing bound just proved and H\"older's inequality in time yield
\[
\begin{aligned}
\ep\|u^\ep\|_{L_T^1(B_{\infty,1}^{2})}
&\le C\ep M^{1/2}T^{1/2}
\|u^\ep\|_{L_T^1(B_{\infty,1}^{3})}^{1/2}\\
&\le C_{M,T}\ep^{1/2}.
\end{aligned}
\]
Together with \eqref{eq:c5-direct-Linf-pre}, this proves \eqref{eq:c5-rate-Linf}.
By Lemma~\ref{lem:log}, \eqref{eq:c5-uniform-endpoint}, and
\eqref{eq:c5-rate-Linf}, we obtain
\[
\begin{aligned}
\|u^\ep-u^0\|_{L_T^\infty(B_{\infty,1}^{0})}
&\le C\|u^\ep-u^0\|_{L_T^\infty(L^\infty)}
\left(
1+\log\left(
e+\frac{C_M}{\|u^\ep-u^0\|_{L_T^\infty(L^\infty)}}
\right)
\right)\\
&\le C_{M,T}\sqrt\ep
\left(1+\log\left(e+\frac{C_M}{\sqrt\ep}\right)\right)\\
&\le C_{M,T}\sqrt\ep\log(e/\ep).
\end{aligned}
\]
This proves \eqref{eq:c5-rate-B0}.

For $N\ge N_0$, Lemma~\ref{lem:regularization} gives solutions
$u_N^\ep$ and $u_N^0$ to \eqref{eq:PKS} and \eqref{eq:HKSE},
respectively, on $[0,T]$, with common initial data $S_Nu_0$ and
\[
S_N^\ep=(I-\Delta)^{-1}u_N^\ep,
\qquad
S_N^0=(I-\Delta)^{-1}u_N^0.
\]
By \eqref{eq:error-equation}, we have
\begin{equation}\label{eq:c5-smooth-error}
\begin{cases}
\displaystyle
\partial_t(u_N^\ep-u_N^0)
+(1-2u_N^0)\nabla S_N^0\cdot\nabla(u_N^\ep-u_N^0)
=\sum_{i=1}^5F_{i,N},\\[1ex]
(u_N^\ep-u_N^0)(0)=0,
\end{cases}
\end{equation}
where $F_{i,N}$ are obtained from $F_i$ in
\eqref{eq:error-sources} by replacing $u^\ep$ and $u^0$
with $u_N^\ep$ and $u_N^0$, respectively.

By \eqref{eq:c5-high-regularity} with $s=2$ and
Lemma~\ref{lem:bernstein}, we obtain
\begin{equation*}
\|u_N^\ep\|_{L_T^\infty(B_{\infty,1}^{2})}
+\|u_N^0\|_{L_T^\infty(B_{\infty,1}^{2})}
\le C_{M,T}2^N.
\end{equation*}
Repeating the $L^\infty$ estimates for \eqref{eq:error-equation}
and using the regularity bounds for $u_N^\ep$, we obtain
\begin{equation}\label{eq:c5-wN-Linf}
\|u_N^\ep-u_N^0\|_{L_T^\infty(L^\infty)}
\le C_{M,T}\ep2^N.
\end{equation}
Similarly, taking $s=3$ in \eqref{eq:c5-high-regularity},
we obtain
\begin{equation*}
\|u_N^\ep\|_{L_T^\infty(B_{\infty,1}^{3})}
+\|u_N^0\|_{L_T^\infty(B_{\infty,1}^{3})}
\le C_{M,T}2^{2N}.
\end{equation*}
Using \eqref{eq:c5-critical-transport-tools},
Corollary~\ref{lem:bony}, Lemma~\ref{lem:elliptic},
and the bounds above, we obtain
\begin{equation}\label{eq:c5-wN-transport}
\begin{aligned}
\|u_N^\ep(t)-u_N^0(t)\|_{B_{\infty,1}^{1}}
&\le C_{M,T}\int_0^t\sum_{i=1}^{5}
\|F_{i,N}(\tau)\|_{B_{\infty,1}^{1}}\,d\tau\\
&\le C_{M,T}\ep2^{2N}
+C_{M,T}\int_0^t
\|u_N^\ep(\tau)-u_N^0(\tau)\|_{B_{\infty,1}^{1}}\,d\tau\\
&\quad+C_{M,T}2^N\int_0^t
\|u_N^\ep(\tau)-u_N^0(\tau)\|_{L^\infty}\,d\tau\\
&\le C_{M,T}\ep2^{2N}
+C_{M,T}\int_0^t
\|u_N^\ep(\tau)-u_N^0(\tau)\|_{B_{\infty,1}^{1}}\,d\tau,
\end{aligned}
\end{equation}
where the last inequality follows from \eqref{eq:c5-wN-Linf}.
Gronwall's inequality then yields
\begin{equation}\label{eq:c5-wN-B1}
\|u_N^\ep-u_N^0\|_{L_T^\infty(B_{\infty,1}^{1})}
\le C_{M,T}\ep2^{2N}.
\end{equation}
Finally, Lemma~\ref{lem:regularization} gives, for every $N\ge N_0$,
\begin{equation*}
 \|u^\ep-u_N^\ep\|_{L_T^\infty(L^\infty)}
 +\|u^0-u_N^0\|_{L_T^\infty(L^\infty)}
 \le C_{M,T}2^{-N}\|u_0-S_Nu_0\|_{B_{\infty,1}^{1}},
\end{equation*}
and
\[
 \|u^\ep-u_N^\ep\|_{L_T^\infty(B_{\infty,1}^{1})}
 +\|u^0-u_N^0\|_{L_T^\infty(B_{\infty,1}^{1})}
 \le C_{M,T}\|u_0-S_Nu_0\|_{B_{\infty,1}^{1}}.
\]
Combining the above estimates with \eqref{eq:c5-wN-B1}, we obtain
\begin{equation}\label{eq:c5-final-BS}
\|u^\ep-u^0\|_{L_T^\infty(B_{\infty,1}^{1})}
\le C_{M,T}\|u_0-S_Nu_0\|_{B_{\infty,1}^{1}}+C_{M,T}\ep2^{2N}.
\end{equation}
For sufficiently small $\ep$, set
\[
 N(\ep)=\left\lfloor\frac14\log_2\frac1\ep\right\rfloor
 \ge N_0.
\]
Then $N(\ep)\to\infty$ and $\ep2^{2N(\ep)}\lesssim\ep^{1/2}\to0$. Since $\|u_0-S_Nu_0\|_{B_{\infty,1}^{1}}\to0$,
\eqref{eq:c5-final-BS} proves \eqref{eq:c5-strong-B1}.
Combining \eqref{eq:c5-rate-Linf} with
Proposition~\ref{prop:c5-lower-fixed} proves
\eqref{eq:c5-sharp-two-sided} and the sharpness assertion.
\end{proof}

\begin{prop}\label{prop:higher-rates}
Under the hypotheses of Theorem~\ref{thm:c5-endpoint-vv}, assume also
$u_0\in B_{\infty,1}^{1+\sigma}$ with $\sigma>0$. Then
\[
 \|u^\ep-u^0\|_{L_T^\infty(B_{\infty,1}^{1})}
 \le C_{\sigma,u_0,T}\ep^{\min\{\sigma/2,1\}},\qquad
 \|u^\ep-u^0\|_{L_T^\infty(L^\infty)}
 \le C_{\sigma,u_0,T}\ep^{\min\{(1+\sigma)/2,1\}}.
\]
The constants are independent of $\ep$.
\end{prop}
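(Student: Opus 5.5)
The plan is to run the error equation \eqref{eq:error-equation} directly, as in the proof of Theorem~\ref{thm:c5-endpoint-vv}, but now using the extra regularity to bound $F_1=\ep\Delta u^\ep$ more efficiently. No regularization by $S_N$ should be needed.

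\emph{Step 1: propagate regularity uniformly in $\ep$.} Since $\|u^\ep\|_{L_T^\infty(B_{\infty,1}^{1})}\le M$ by \eqref{eq:c5-uniform-endpoint}, Lemma~\ref{lem:c5-high-regularity} with $s=1+\sigma$ (first for the smooth approximations $u_N^\ep$, then passing to the limit by Fatou/lower semicontinuity as in Lemma~\ref{lem:regularization}) gives
\[
\|u^\ep\|_{L_T^\infty(B_{\infty,1}^{1+\sigma})}+\|u^0\|_{L_T^\infty(B_{\infty,1}^{1+\sigma})}\le C_{\sigma,M,T}\|u_0\|_{B_{\infty,1}^{1+\sigma}}=:K_\sigma .
\]
Applying Lemma~\ref{lem:transport-diffusion} with $p=\infty$, $r=1$, $s=1+\sigma$ and $\rho=\rho_1=1$ yields the smoothing bound $\ep\|u^\ep\|_{L_T^1(B_{\infty,1}^{3+\sigma})}\le C$. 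Here the source and velocity are bounded in $B_{\infty,1}^{1+\sigma}$ by the tame estimate \eqref{eq:c5-tame-tools}, and $V'$ is controlled by $\|\nabla a\|_{B^{\sigma}_{\infty,1}}$ when $\sigma\ge 1$, or by the Lipschitz norm otherwise.

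\emph{Step 2: the $L^\infty$ rate.} From \eqref{eq:c5-direct-Linf-pre},
\[
\|u^\ep-u^0\|_{L_T^\infty(L^\infty)}\le C\ep\|u^\ep\|_{L_T^1(B_{\infty,1}^{2})}.
\]
If $\sigma\ge1$, this is at most $C\ep T K_\sigma$. If $\sigma<1$, interpolate $B^2$ between $B^{1+\sigma}$ and $B^{3+\sigma}$ with $\theta=(1+\sigma)/2$ and use H\"older in time. This gives
\[
\ep\,K_\sigma^{\theta}\,\bigl(\ep^{-1}\bigr)^{1-\theta}\lesssim \ep^{\theta}=\ep^{(1+\sigma)/2}.
\]

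\emph{Step 3: the $B_{\infty,1}^1$ rate.} Apply \eqref{eq:c5-critical-transport-tools} to \eqref{eq:error-equation}. For $F_2,\dots,F_5$ use \eqref{eq:c5-tame-tools} together with Lemma~\ref{lem:elliptic}, as in \eqref{eq:c5-wN-transport}. The terms with the error $w=u^\ep-u^0$ multiplied by $\nabla u^\ep$ (in $F_3$) or $\nabla u^\ep$ in $F_2$ produce the bound
\[
C\|w\|_{B^1_{\infty,1}}+C\|u^\ep\|_{B^2_{\infty,1}}\|w\|_{L^\infty}.
\]
The factor $\|u^\ep\|_{B^2}$ is not uniformly bounded when $\sigma<1$. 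This is the main obstacle. I would handle it by splitting $\|u^\ep\|_{B^2}$ via interpolation between $B^{1+\sigma}$ and $B^{3+\sigma}$, as in Step 2, so that
\[
\int_0^T\|u^\ep\|_{B^2}\,dt\lesssim \ep^{\theta-1}=\ep^{-(1-\sigma)/2}.
\]
Multiplying by $\|w\|_{L^\infty}\lesssim\ep^{(1+\sigma)/2}$ gives $\ep^{\sigma}\le\ep^{\sigma/2}$, which is acceptable. The linear term $F_1$ is bounded in $L^1_T(B^1_{\infty,1})$ by $\ep\|u^\ep\|_{L^1_T(B^3_{\infty,1})}$. For $\sigma\ge2$ this is $O(\ep)$ directly. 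For $\sigma<2$, interpolate $B^3$ between $B^{1+\sigma}$ and $B^{3+\sigma}$ with weight $\sigma/2$ on the top norm, which gives $\ep^{\sigma/2}$. Gronwall then yields $\ep^{\min\{\sigma/2,1\}}$.

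As in the earlier proofs, all manipulations should be justified on the smooth solutions $u_N^\ep$ and passed to the limit using \eqref{eq:regularization-errors}.
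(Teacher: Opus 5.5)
Your argument is correct, but it follows a different route from the paper's.

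\emph{The paper's route.} It never uses parabolic smoothing in this proposition. It truncates the data to $S_Nu_0$ and uses Lemma~\ref{lem:c5-high-regularity} to get $\|u_N^\ep\|_{B^{k}_{\infty,1}}+\|u_N^0\|_{B^{k}_{\infty,1}}\lesssim 1+2^{(k-1-\sigma)N}$ for $k=2,3$. It then estimates $u_N^\ep-u_N^0$ in $L^\infty$ and $B^1_{\infty,1}$ as in \eqref{eq:c5-wN-Linf}--\eqref{eq:c5-wN-B1}. Next it adds the tails $2^{-\sigma N}$ and $2^{-(1+\sigma)N}$ coming from Lemma~\ref{lem:c5-uniform-difference}. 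Finally it takes $2^N\sim\ep^{-1/2}$, or lets $N$ be large when $\sigma\ge2$ (for the $B^1$ bound) or $\sigma\ge1$ (for the $L^\infty$ bound).

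\emph{Your route.} You propagate $B^{1+\sigma}_{\infty,1}$ regularity to $u^\ep$ itself. You then obtain $\ep\|u^\ep\|_{L^1_T(B^{3+\sigma}_{\infty,1})}\le C$ from Lemma~\ref{lem:transport-diffusion} at $s=1+\sigma$. Interpolation then bounds $\ep\|u^\ep\|_{L^1_T(B^{2}_{\infty,1})}$, $\ep\|u^\ep\|_{L^1_T(B^{3}_{\infty,1})}$ and $\|u^\ep\|_{L^1_T(B^{2}_{\infty,1})}$ directly.

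\emph{Why the two agree.} The paper's cutoff $2^N\sim\ep^{-1/2}$ plays exactly the role of the smoothing scale $\sqrt\ep$ in your interpolation. The exponents match: your cross term $\ep^{(1+\sigma)/2}\cdot\ep^{-(1-\sigma)/2}=\ep^{\sigma}$ is the paper's $\ep(1+2^{(1-\sigma)N})^2$ at $2^N=\ep^{-1/2}$.

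\emph{What each buys.} Your version is the natural extension of the proof of \eqref{eq:c5-rate-Linf}. It covers all $\ep\in(0,1]$ at once, with no $N\ge N_0$ bookkeeping, and uses the regularization only for justification. The paper's version avoids the fractional-order smoothing estimate. It also avoids passing the $B^{1+\sigma}_{\infty,1}$ and $B^{3+\sigma}_{\infty,1}$ bounds to $u^\ep$ by lower semicontinuity. Instead it reuses only integer-order bounds for smooth data and the machinery already set up for \eqref{eq:c5-strong-B1}.

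Two small slips, neither of which affects the result:
\begin{enumerate}
\item In Step~3 the interpolation weights are reversed. The exponent $\sigma/2$ belongs on the $B^{1+\sigma}_{\infty,1}$ norm and $1-\sigma/2$ on the $B^{3+\sigma}_{\infty,1}$ norm. That is what produces $\ep\cdot\ep^{-(1-\sigma/2)}=\ep^{\sigma/2}$.
\item In Step~1, with $p=p_1=\infty$ and $s=1+\sigma>1=1+d/p_1$, Lemma~\ref{lem:transport-diffusion} uses $V'=\|\nabla a\|_{B^{\sigma}_{\infty,1}}$ for every $\sigma>0$, not the Lipschitz norm when $\sigma<1$. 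This is harmless, since that quantity is bounded by $C\|a\|_{B^{1+\sigma}_{\infty,1}}\le C_{\sigma,u_0}$ by your Step~1.
\end{enumerate}
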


\begin{proof}
By the Littlewood--Paley decomposition, we have
\begin{equation}\label{eq:c5-regular-tail}
\|u_0-S_Nu_0\|_{B_{\infty,1}^{1}}\le C_\sigma \|u_0\|_{B_{\infty,1}^{1+\sigma}}2^{-\sigma N},\qquad
\|(I-S_N)u_0\|_{L^\infty}\le C_\sigma \|u_0\|_{B_{\infty,1}^{1+\sigma}}2^{-(1+\sigma)N},
\end{equation}
and
\begin{equation*}
\|S_Nu_0\|_{B_{\infty,1}^{k}}\le C_\sigma \|u_0\|_{B_{\infty,1}^{1+\sigma}}
\left(1+2^{(k-1-\sigma)N}\right),\qquad k=2,3.
\end{equation*}
Lemma~\ref{lem:c5-high-regularity} thus yields, for $N\ge N_0$,
\[
\|u_N^\ep\|_{L_T^\infty(B_{\infty,1}^{k})}
+\|u_N^0\|_{L_T^\infty(B_{\infty,1}^{k})}
\le C_{\sigma,M,T}\|u_0\|_{B_{\infty,1}^{1+\sigma}}
\left(1+2^{(k-1-\sigma)N}\right),
\qquad k=2,3.
\]

Arguing as in the derivation of \eqref{eq:c5-wN-Linf}
and using the improved $B_{\infty,1}^{2}$ bound, we obtain
\begin{equation}\label{eq:c5-regular-smooth-Linf}
\|u_N^\ep-u_N^0\|_{L_T^\infty(L^\infty)}\le C_{\sigma,M,T}\ep \|u_0\|_{B_{\infty,1}^{1+\sigma}}\left(1+2^{(1-\sigma)N}\right).
\end{equation}
Using the preceding regularity bounds and
\eqref{eq:c5-regular-smooth-Linf} in the argument for
\eqref{eq:c5-wN-transport}, we obtain by Gronwall's inequality
\[
\begin{aligned}
\|u_N^\ep-u_N^0\|_{L_T^\infty(B_{\infty,1}^{1})}
&\le C_{\sigma,u_0,T}\ep
\left(1+2^{(2-\sigma)N}
+\left(1+2^{(1-\sigma)N}\right)^2\right)\\
&\le C_{\sigma,u_0,T}\ep
\left(1+2^{(2-\sigma)N}\right).
\end{aligned}
\]

Combining this estimate with Lemma~\ref{lem:regularization} and
\eqref{eq:c5-regular-tail} gives
\begin{equation}\label{eq:c5-regular-final-B1}
\|u^\ep-u^0\|_{L_T^\infty(B_{\infty,1}^{1})}
\le C_{\sigma,u_0,T}
\left(2^{-\sigma N}+\ep\left(1+2^{(2-\sigma)N}\right)\right).
\end{equation}
For $0<\sigma<2$ and sufficiently small $\ep$, choose
$N=\lfloor\tfrac12\log_2(1/\ep)\rfloor\ge N_0$.
For $\sigma\ge2$, take $N\ge N_0$ large enough such that
$2^{-\sigma N}\le\ep$ and apply \eqref{eq:c5-regular-final-B1}. The result is
\[
\|u^\ep-u^0\|_{L_T^\infty(B_{\infty,1}^{1})}
\le C_{\sigma,u_0,T}
\begin{cases}
\ep^{\sigma/2},&0<\sigma<2,\\[1mm]
\ep,&\sigma\ge2.
\end{cases}
\]
For the $L^\infty$ norm, Lemma~\ref{lem:c5-uniform-difference}, \eqref{eq:c5-regular-tail}, and \eqref{eq:c5-regular-smooth-Linf} give
\[
\|u^\ep-u^0\|_{L_T^\infty(L^\infty)}
\le C_{\sigma,u_0,T}
\left(2^{-(1+\sigma)N}+\ep\left(1+2^{(1-\sigma)N}\right)\right).
\]
Choose the same $N$ if $0<\sigma<1$, and let $N\to\infty$ if $\sigma\ge1$. Consequently,
\[
\|u^\ep-u^0\|_{L_T^\infty(L^\infty)}
\le C_{\sigma,u_0,T}
\begin{cases}
\ep^{(1+\sigma)/2},&0<\sigma<1,\\[1mm]
\ep,&\sigma\ge1.
\end{cases}
\]
For the remaining values of $\ep$, the estimates follow from the uniform bounds.
\end{proof}

Choose a bounded, nondecreasing function $F\ge0$, smooth away
from zero, such that
\[
F(r)=
\begin{cases}
0, & r=0,\\[1mm]
\displaystyle\frac{r}{[\log(e/r)]^2},
& 0<r\le\frac14,\\[3mm]
F(\frac12), & r\ge\frac12,
\end{cases}
\]
with a smooth nondecreasing continuation on $[\frac14,\frac12]$.
Let $f$ be the even $2\pi$-periodic extension of $\delta F(|x|)$
from $[-\pi,\pi]$, where $\delta>0$ is fixed and
$\delta\|{F}\|_{L^\infty}\le\frac14$. Define
\begin{equation}\label{eq:c5-lower-datum}
u_0(x)=\frac12+f(x_1),\qquad x\in\R^d.
\end{equation}

\begin{prop}\label{prop:c5-lower-fixed}
The datum \eqref{eq:c5-lower-datum} belongs to
$B_{\infty,1}^{1}(\R^d)$.
Let $u^\ep,u^0$ be the corresponding solutions on the common
interval $[0,T]$ given by Lemma~\ref{lem:uniform-local}.
For each fixed $t_*\in(0,T]$, there exist $c,\ep_*>0$,
independent of $\ep$, such that
\[
\|{u^\ep(t_*)-u^0(t_*)}\|_{L^\infty}
\ge \frac{c\sqrt{\ep}}{[\log(e/\ep)]^2},
\qquad 0<\ep\le\ep_*.
\]
Thus, for this fixed datum, no convergence rate $O(\ep^\alpha)$
with $\alpha>1/2$ holds in either $L_T^\infty(L^\infty)$
or $L_T^\infty(B_{\infty,1}^{0})$.
\end{prop}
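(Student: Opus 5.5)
The plan is to exploit the special structure of the datum: it is one-dimensional, even, and equal to $\frac12$ at the kink, so the transport velocity $(1-2u)\nabla S$ vanishes there.

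\emph{Membership in $B_{\infty,1}^{1}$.} For $0<r\le\frac14$ we have $F'(r)=[\log(e/r)]^{-2}+2[\log(e/r)]^{-3}$, so $F'(r)\to0$ as $r\to0$. Hence $f\in C_b^1$, and $f'$ is odd and continuous. Moreover
\[
F''(r)=\frac{2}{r[\log(e/r)]^{3}}+\frac{6}{r[\log(e/r)]^{4}},
\]
so $\omega(f',h)\lesssim[\log(e/h)]^{-2}$ and $\int_0^1\omega(f',h)h^{-1}\,dh<\infty$. Lemma~\ref{lem:dini} (periodic version) then gives $u_0\in B_{\infty,1}^{1}$. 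Note also that $f''\in L^1(-\pi,\pi)$.

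\emph{Reduction to one dimension.} By uniqueness and invariance under translations in $x'=(x_2,\dots,x_d)$ and under $x_1\mapsto-x_1$, both $u^\ep$ and $u^0$ depend only on $x_1$, are $2\pi$-periodic and even. Then $S$ is even and $\partial_1S$ is odd, so the velocity $a^0=(1-2u^0)\partial_1S^0$ vanishes at $x_1=0$. Thus $x_1=0$ is a stationary characteristic of \eqref{eq:error-equation}. I split the sources \eqref{eq:error-sources} into local terms $b\,w$, with $w=u^\ep-u^0$ and $|b|\le C_M$ (these come from $F_3,F_4$ and the $-w$ part of $\Delta(S^\ep-S^0)=(I-\Delta)^{-1}w-w$), and nonlocal terms involving $(I-\Delta)^{-1}w$ and $\nabla(I-\Delta)^{-1}w$. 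Along $x_1=0$, Duhamel gives
\[
w(t_*,0)=\int_0^{t_*}e^{\int_\tau^{t_*}b}\Bigl(\ep\,\partial_1^2u^\ep(\tau,0)+G(\tau,0)\Bigr)\,d\tau,
\]
where $G$ collects the nonlocal terms.

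\emph{Nonlocal terms are $O(\ep)$.} Since the one-dimensional periodic kernels of $(I-\partial_1^2)^{-1}$ and $\partial_1(I-\partial_1^2)^{-1}$ are bounded, $\|G\|_{L^\infty}\le C_M\|w\|_{L^1(-\pi,\pi)}$. I then run the $L^1$ argument of Proposition~\ref{prop:p1-boundary} on one period. This requires the bound $\ep\|\partial_1^2u^\ep\|_{L_T^1L^1(-\pi,\pi)}\le C\ep$, i.e. a uniform $W^{2,1}$-per-period bound for $u^\ep$. I would obtain it by differentiating the equation twice and applying Lemma~\ref{lem:scalar-estimate} with $p=1$; the Lipschitz and $L^\infty$ bounds \eqref{eq:c5-uniform-endpoint} control the coefficients, and $f''\in L^1$ supplies the initial datum. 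This yields $\|w\|_{L^\infty_T L^1}\le C\ep$, so the $G$-contribution to $w(t_*,0)$ is $O(\ep)$.

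\emph{Main obstacle: a lower bound for $\partial_1^2u^\ep(\tau,0)$.} I would aim for
\[
\partial_1^2u^\ep(\tau,0)\ge\frac{c}{\sqrt{\ep\tau}\,[\log(e/\ep\tau)]^2}
\]
for small $\ep\tau$. Set $g=\partial_1u^\ep$. It solves a linear parabolic equation $\partial_tg-\ep\partial_1^2g+a^\ep\partial_1g-\beta g=0$, where $\beta$ is bounded uniformly in $\ep$ (using \eqref{eq:c5-uniform-endpoint} and Lemma~\ref{lem:elliptic}; the coefficient $a^\ep$ is Lipschitz, and $\beta$ contains a nonlocal part that must be absorbed, possibly as a small forcing). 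The function $g$ is odd and $2\pi$-periodic, with $g_0=\delta F'(|x_1|)\operatorname{sgn}x_1\ge0$ on $(0,\pi)$. A maximum principle on $(0,\pi)$ with zero Dirichlet data, via Lemma~\ref{lem:comparison} after an odd reflection, gives $g\ge0$ there. Next I compare $g$ from below with an explicit subsolution $Q=e^{-C\tau}\,(\text{Dirichlet heat flow of }g_0\text{ at time }\ep\tau)$, corrected for the drift $a^\ep=O(|x_1|)$ near $0$. This gives
\[
\partial_1g(\tau,0)\gtrsim\int_0^{\sqrt{\ep\tau}}g_0(y)\,\frac{y}{(\ep\tau)^{3/2}}\,dy\gtrsim\frac{1}{\sqrt{\ep\tau}\,[\log(e/\ep\tau)]^2},
\]
where the last step uses $F'(y)\ge[\log(e/y)]^{-2}$. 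Inserting this into the Duhamel formula, with $e^{\int b}\ge e^{-C_MT}$, gives
\[
w(t_*,0)\ge c\int_{t_*/2}^{t_*}\frac{\sqrt{\ep}\,d\tau}{\sqrt{\tau}\,[\log(e/\ep)]^2}-C\ep\ge\frac{c_{t_*}\sqrt\ep}{[\log(e/\ep)]^2}
\]
for $\ep\le\ep_*$.

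The non-sharpness claim follows at once: a rate $O(\ep^\alpha)$ with $\alpha>1/2$ would contradict this lower bound, and $\|\cdot\|_{L^\infty}\lesssim\|\cdot\|_{B_{\infty,1}^{0}}$ handles $B_{\infty,1}^{0}$ as well. The step I expect to be hardest is constructing the subsolution for $\partial_1u^\ep$ rigorously, in particular handling the drift and the nonlocal zeroth-order terms uniformly in $\ep$.
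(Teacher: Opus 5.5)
Your reduction is sound and differs from the paper only in bookkeeping. At $x_1=0$ the transport term and $F_2,F_3$ vanish by evenness, so the Duhamel formula holds. A per-period $W^{2,1}$ bound then gives $\|u^\ep-u^0\|_{L_T^\infty L^1(-\pi,\pi)}\le C\ep$, hence $G=O(\ep)$. (The paper instead obtains $\|S^\ep-S^0\|_{L^\infty}\le C\ep$ from a maximum principle for the primitive $\int_0^x(u^\ep-u^0)\,dy$, which needs no second-derivative bound.)

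\textbf{The gap.} Everything rests on $\partial_1^2u^\ep(\tau,0)\gtrsim(\ep\tau)^{-1/2}[\log(e/\ep\tau)]^{-2}$, which you do not prove, and the sketch you give for it fails as stated.
\begin{enumerate}
\item[(i)] Differentiating the reaction term $-u(1-u)(S^\ep-u)$ produces, besides multiples of $g=\partial_1u^\ep$, the source $-u^\ep(1-u^\ep)\partial_1S^\ep$. This term is not of the form (bounded)$\cdot g$, and it is of the same size $\delta$ as $g_0$. It also has a definite sign: with $K$ the periodic kernel of $(I-\partial_1^2)^{-1}$, which is strictly decreasing in the distance on the circle,
\[
\partial_1S^\ep(0,x)=\int_0^\pi\bigl[K(x-y)-K(x+y)\bigr]g_0(y)\,dy>0,\qquad x\in(0,\pi).
\]
Since $g_0\equiv0$ on $[\frac12,\pi]$, $g$ becomes negative there for small $t>0$. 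So the claim ``$g\ge0$ on $(0,\pi)$ by the maximum principle'' is false.
\item[(ii)] Lemma~\ref{lem:comparison} cannot be applied after an odd reflection, since it needs $Q\to-\infty$ at both ends, which no odd function satisfies.
\item[(iii)] The drift correction you leave open cannot be done by the natural rescaling $x\mapsto e^{-Ct}x$. That trick absorbs $|a^\ep|\le C|x|$ only if $\partial_x$ of the heat term is nonnegative on $(0,\pi)$. The Dirichlet heat flow of $g_0$ is not monotone there: $g_0$ increases on $(0,\frac14)$ and then drops to $0$.
\end{enumerate}

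\textbf{A repair of your route.} Compare on $[0,T]\times[0,\pi]$ with
\[
e^{-Ct}H(s(t),e^{-Ct}x)-Ke^{\lambda t}x,\qquad s(t)=\tfrac{\ep}{2C}(1-e^{-2Ct}),
\]
where $H$ is the whole-line heat flow of the odd, nondecreasing truncation $\delta\operatorname{sgn}(x)F'(\min\{|x|,r_1\})$, with $K\ge\delta F'(r_1)/r_1$ and $\lambda$ large. The linear term absorbs the source (note $|u^\ep(1-u^\ep)\partial_1S^\ep|\le Cx$ on $[0,\pi]$) and the boundary value at $x=\pi$. It costs only $O(1)$ in $\partial_1g(\tau,0)$.

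\textbf{The paper's route.} The paper avoids differentiation altogether. It applies Lemma~\ref{lem:comparison} to $r=u^\ep(t,x)-u^0(t,0)$, which solves a parabolic equation with drift $(1-2u^\ep)\partial_xS^\ep=O(|x|)$, a bounded zeroth-order coefficient, and source $u^0(t,0)(1-u^0(t,0))(S^0(t,0)-S^\ep(t,x))=O(x^2+\ep)$. This is where $\|S^\ep-S^0\|_{L^\infty}\le C\ep$ enters. Because $\delta F(|x|)$ is itself nondecreasing in $|x|$, the term $e^{-Ct}[e^{s(t)\partial_x^2}\delta F(|\cdot|)](e^{-Ct}x)$ absorbs the drift. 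The correction $-Ce^{(3+3C)t}(x^2+\ep)$ absorbs the source and supplies $Q\to-\infty$, so the lemma applies on $\R$. Evaluating at $x=0$ yields the $\sqrt\ep/[\log(e/\ep)]^2$ bound directly, with the time integration built into the comparison. No pointwise lower bound on $\partial_1^2u^\ep(\tau,0)$ is needed.
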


\begin{proof}
Since $f\in C_b^1$ and
$\omega(f',h)\le C[\log(e/h)]^{-2}$ for small $h$,
Lemma~\ref{lem:dini} gives the asserted regularity.
Uniqueness preserves the one-dimensional, even and periodic
structure, and the common periodic mean is conserved.
We henceforth write $x=x_1$.

Since the periodic means of $u^\ep$ and $u^0$ coincide,
\[
\int_0^x (u^\ep-u^0)(t,y)\,dy
\]
is periodic and vanishes at $t=0$.
Integrating the conservative equations and using evenness yields
\[
\begin{aligned}
&\partial_t\int_0^x (u^\ep-u^0)(t,y)\,dy
-\ep\partial_x^2\int_0^x (u^\ep-u^0)(t,y)\,dy +(1-u^\ep-u^0)\partial_xS^\ep(u^\ep-u^0)\\
=&\ep\partial_xu^0
+u^0(1-u^0)\Biggl(
\int_0^x (u^\ep-u^0)(t,y)\,dy
-(I-\partial_x^2)^{-1}
\int_0^x (u^\ep-u^0)(t,y)\,dy
\Biggr).
\end{aligned}
\]
Moreover,
\[
S^\ep-S^0
=\partial_x(I-\partial_x^2)^{-1}
\int_0^x (u^\ep-u^0)(t,y)\,dy.
\]
The maximum principle, the elliptic $L^\infty$ estimates
and Gronwall's inequality therefore imply
\begin{equation}\label{eq:c5-lower-potential}
\Biggl\|
\int_0^x (u^\ep-u^0)(t,y)\,dy
\Biggr\|_{L_T^\infty(L^\infty)}
+\|S^\ep-S^0\|_{L_T^\infty(L^\infty)}
\le C\ep.
\end{equation}

Evaluating the hyperbolic equation at $x=0$ and subtracting
it from the parabolic equation, we obtain
\[
\begin{aligned}
&\partial_t\bigl(u^\ep-u^0(t,0)\bigr)
-\ep\partial_x^2\bigl(u^\ep-u^0(t,0)\bigr)+(1-2u^\ep)\partial_xS^\ep
          \partial_x\bigl(u^\ep-u^0(t,0)\bigr)\\
=&\Bigl[
(1-u^\ep-u^0(t,0))(u^\ep-S^\ep)
+u^0(t,0)\bigl(1-u^0(t,0)\bigr)
\Bigr]\bigl(u^\ep-u^0(t,0)\bigr)\\
& +u^0(t,0)\bigl(1-u^0(t,0)\bigr)
          \bigl(S^0(t,0)-S^\ep(t,x)\bigr).
\end{aligned}
\]
The critical bounds, evenness and
\eqref{eq:c5-lower-potential} allow us to choose $C\ge1$,
independent of $\ep$, such that
\[
\begin{aligned}
&\bigl|(1-2u^\ep)\partial_xS^\ep\bigr|\le C|x|,\\
&\Bigl|
(1-u^\ep-u^0(t,0))(u^\ep-S^\ep)
+u^0(t,0)\bigl(1-u^0(t,0)\bigr)
\Bigr|\le C,\\
&\Bigl|
u^0(t,0)\bigl(1-u^0(t,0)\bigr)
\bigl(S^0(t,0)-S^\ep(t,x)\bigr)
\Bigr|\le C(x^2+\ep).
\end{aligned}
\]
Increasing $C$ if necessary, we also assume that
$C\pi^2\ge\delta\|F\|_{L^\infty}$.

Consider the comparison function
\[
Q(t,x)
=e^{-Ct}
\Bigl[
e^{\frac{\ep}{2C}(1-e^{-2Ct})\partial_x^2}
\bigl(\delta F(|\cdot|)\bigr)
\Bigr](e^{-Ct}x)
-Ce^{(3+3C)t}(x^2+\ep).
\]
Here the heat semigroup acts on the nonperiodic function
$\delta F(|x|)$ on $\R$.
The heat term is nonnegative and nondecreasing in $|x|$.
Using these properties and the preceding bounds,
direct differentiation gives
\[
\begin{aligned}
&\partial_tQ-\ep\partial_x^2Q
 +(1-2u^\ep)\partial_xS^\ep\,\partial_xQ-\Bigl[
(1-u^\ep-u^0(t,0))(u^\ep-S^\ep)
+u^0(t,0)\bigl(1-u^0(t,0)\bigr)
\Bigr]Q\\
\le&
-Ce^{(3+3C)t}
\Bigl[(3+2C)(x^2+\ep)-2\ep-2Cx^2\Bigr]\\
\le& -Ce^{(3+3C)t}(x^2+\ep)\\
\le&
u^0(t,0)\bigl(1-u^0(t,0)\bigr)
\bigl(S^0(t,0)-S^\ep(t,x)\bigr).
\end{aligned}
\]
By the choice of $C$,
\[
Q(0,x)=\delta F(|x|)-C(x^2+\ep)\le f(x)
\qquad\text{for all }x\in\R.
\]
Moreover, $Q(t,x)\to-\infty$ as $|x|\to\infty$,
uniformly for $t\in[0,T]$.
Lemma~\ref{lem:comparison} therefore yields
\[
u^\ep(t,x)-u^0(t,0)\ge Q(t,x).
\]

Fix $t_*\in(0,T]$.
For $0<\ep\le1/64$, the Gaussian formula gives
\[
\begin{aligned}
&u^\ep(t_*,0)-u^0(t_*,0)\\
\ge&
\frac{\delta e^{-Ct_*}}
{\sqrt{\frac{2\pi\ep}{C}(1-e^{-2Ct_*})}}
\int_{\R}
\exp\Bigl(
-\frac{Cy^2}{2\ep(1-e^{-2Ct_*})}
\Bigr)F(|y|)\,dy
-Ce^{(3+3C)t_*}\ep\\
\ge&
\frac{\delta e^{-Ct_*}}
{\sqrt{\frac{2\pi\ep}{C}(1-e^{-2Ct_*})}}
\int_{\sqrt\ep\le|y|\le2\sqrt\ep}
\exp\Bigl(
-\frac{Cy^2}{2\ep(1-e^{-2Ct_*})}
\Bigr)
\frac{|y|}{[\log(e/|y|)]^2}\,dy
-Ce^{(3+3C)t_*}\ep\\
\ge&
\delta e^{-Ct_*}
\sqrt{\frac{2C}{\pi(1-e^{-2Ct_*})}}
\exp\Bigl(-\frac{2C}{1-e^{-2Ct_*}}\Bigr)
\frac{\sqrt\ep}{[\log(e/\ep)]^2}
-Ce^{(3+3C)t_*}\ep.
\end{aligned}
\]
Since $t_*>0$ is fixed and
$\sqrt\ep[\log(e/\ep)]^2\to0$ as $\ep\to0$,
the last estimate gives
\[
u^\ep(t_*,0)-u^0(t_*,0)
\ge \frac{c\sqrt\ep}{[\log(e/\ep)]^2}
\]
for all sufficiently small $\ep$.
The remaining assertions follow from this lower bound and
$B_{\infty,1}^{0}\hookrightarrow L^\infty$.
\end{proof}

\smallskip
\noindent\textbf{Acknowledgments}

\noindent
This work is supported by the National Natural Science Foundation of China
(No.~12571261).
	
	\smallskip
	\noindent\textbf{Conflict of interest}
	The authors declare that they have no conflict of interest.
	
	\smallskip
	\noindent\textbf{Data Availability}
	No data was used for the research described in the article.

\end{document}